\newtheorem{theorem}{Theorem}
\newtheorem{assumption}{Assumption}
\newtheorem{lemma}{Lemma}
\newtheorem{proposition}{Proposition}
\newtheorem{corollary}{Corollary}
\theoremstyle{definition}
\newtheorem{definition}{Definition}
\theoremstyle{remark}
\newtheorem{remark}{Remark}
\newtheorem*{L:PDA}{Lemma ~\ref{lemma:PDA}}
\newtheorem*{L:h_decreasing}{Lemma ~\ref{lemma:h_decreasing}}
\newtheorem*{L:m_f_existence}{Lemma ~\ref{lemma:m_f_existence}}
\newtheorem*{L:decreasing_Sequence}{Lemma ~\ref{lemma:decreasing_Sequence}}
\newtheorem*{Ex:ex_err_1}{Example ~\ref{ex_err_1}}
\newtheorem*{Ex:ex_err_2}{Example ~\ref{ex_err_2}}
\newtheorem*{Ex:ex_err_3}{Example ~\ref{ex_err_3}}
\newtheorem*{Ex:ex_err_4}{Example ~\ref{ex_err_4}}
\newtheorem*{cor:changing_epsilon_convergence}{Corollary ~\ref{cor:changing_epsilon_convergence}}
\numberwithin{equation}{section}
\newcommand{\ie}{\textit{i.e.,}}
\newcommand{\Real}{\mathbb{R}}
\newcommand{\norm}[1]{\left\|{#1}\right\|}
\DeclareMathOperator*{\dom}{dom}
\DeclareMathOperator*{\argmin}{\arg\!\min}
\DeclareMathOperator*{\argmax}{\arg\!\max}
\DeclareMathOperator{\prox}{prox}
\DeclareMathOperator{\st}{s.t}
\DeclareMathOperator{\Lev}{Lev}
\DeclareMathOperator{\dist}{dist}
\DeclareMathOperator{\Or}{or}
\DeclareMathOperator{\epi}{epi}
\DeclareMathOperator{\proj}{P}
\newcommand{\ba}{\mathbf{a}}
\newcommand{\bb}{\mathbf{b}}
\newcommand{\bp}{\mathbf{p}}
\newcommand{\bu}{\mathbf{u}}
\newcommand{\bv}{\mathbf{v}}
\newcommand{\bx}{\mathbf{x}}
\newcommand{\by}{\mathbf{y}}
\newcommand{\bz}{\mathbf{z}}
\newcommand{\bA}{\mathbf{A}}
\newcommand{\bQ}{\mathbf{Q}}
\newcommand{\bzero}{\mathbf{0}}
\newcommand{\bnu}{\boldsymbol {\nu}}
\newcommand{\ubar}[1]{\underaccent{\bar}{#1}}
\title{Methodology and first-order algorithms for solving nonsmooth and non-strongly convex bilevel optimization problems}
\author{Lior Doron}
\author{Shimrit Shtern\thanks{\href{mailto:shimrits@technion.ac.il}{shimrits@technion.ac.il}}}
\affil{Faculty of Data and Decision Sciences, Technion - Israel Institute of Technology, Haifa, Israel}
\date{}
\begin{document}

\maketitle
\abstract{Simple bilevel problems are optimization problems in which we want to find an optimal solution to an inner problem that minimizes an outer objective function. Such problems appear in many machine learning and signal processing applications as a way to eliminate undesirable solutions. %However, since these problems do not satisfy regularity conditions, they are often hard to solve exactly and are usually solved via iterative regularization. In the past few years, several algorithms were proposed to solve these bilevel problems directly and provide a rate for obtaining feasibility, assuming that the outer function is strongly convex. 
In our work, we suggest a new approach that is designed for bilevel problems with simple outer functions, such as the $l_1$ norm, which are not required to be either smooth or strongly convex. In our new ITerative Approximation and Level-set EXpansion (ITALEX) approach, we alternate between expanding the level-set of the outer function and approximately optimizing the inner problem over this level-set. We show that optimizing the inner function through first-order methods such as proximal gradient and generalized conditional gradient results in a feasibility convergence rate of $O(1/k)$, which up to now was a rate only achieved by bilevel algorithms for smooth and strongly convex outer functions. 
Moreover, we prove an $O(1/\sqrt{k})$ rate of convergence for the outer function, contrary to existing methods, which only provide asymptotic guarantees.
We demonstrate this performance through numerical experiments. }
%\keywords{Bilevel optimization\and Convex minimization\and Complexity analysis\and First-order methods}
\normalsize
\section{Introduction}
We are interested in the following convex bilevel optimization problem (where we use the terminology of \textit{outer} and \textit{inner} levels). The \textit{outer} level is given by  the following constrained minimization problem:
\begin{equation}\tag{BLP}\label{prob:MNP}
    \min_{\bx\in X^*}\omega(\bx),
\end{equation}
where $\omega$ is convex and bounded from below on $X^*$. The set $X^*$ %with compact level sets,  
is the non-empty set of minimizers of the \textit{inner} level problem, which has the form of a classical convex composite model, given by:
\begin{equation}\tag{P}\label{prob:P}
    \min_{\bx\in\Real^n}\{\varphi(\bx):=f(\bx)+g(\bx)\},
\end{equation} where $f$ is convex and continuously differentiable with a Lipschitz-continuous gradient, and $g$ is an extended real-valued, closed, and convex function. We denote the optimal value of \eqref{prob:P} as $\varphi^*$ and the optimal value of \eqref{prob:MNP} as $\omega^*$. The exact assumptions on the structure of this problem will be given in Section~\ref{section:the_method}.

This type of problem, also known sometimes as \textit{simple bilevel programming} or \textit{hierarchical convex optimization}, is a particular case of a general bilevel programming problem (as described in \cite{SBP}). It is considered in many machine learning, signal processing, and regression applications as a key approach to solve underdetermined problems. Indeed, in order to find a sparse solution to \eqref{prob:P} we would be interested in solving the bilevel problem \eqref{prob:MNP} with $\omega(\bx)=\|\bx\|_1$  \cite{Basis_Pursuit} and in order to find a dense solution we would choose $\omega(\bx)=\|\bx\|_2^2$  \cite{Tikhonov}. However, since these bilevel problems are generally hard to solve, a common approach is to incorporate $\omega$ as a regularization term and solve the relaxed problem $\min_{\bx}\{ \varphi(\bx)+\lambda\omega(\bx)\}$. Such a regularization approach leads to the famous LASSO model when $\omega$ is the $l_1$ norm \cite{Basis_Pursuit,LASSO} and to the ridge regression model when $\omega$ is the $l_2$ norm \cite{Ridge,Tikhonov}. While in some cases there exists a small enough value $\lambda$ for which the regularized and bilevel problems are equivalent \cite{Tikhonov}, finding this $\lambda$ is not a trivial task.

Motivated by the case $\omega(\bx)=\|\bx\|_1$, in this work we will focus mainly on the cases where $\omega$ is convex but not necessarily smooth or strongly convex.

\subsection{Main challenges of Bilevel optimization problem}\label{subsection:challenges}
The \eqref{prob:MNP} can be reformulated as the following simple convex optimization problem: \begin{equation}\label{prob:MNP'}\tag{BLP'}\begin{aligned}
    &\min_{\bx\in\Real^n} && \omega(\bx) \\
    &\st &&\varphi(\bx)\leq \varphi^*,
\end{aligned}\end{equation}
where $\varphi^*$ is the exact (or approximated) optimal value of \eqref{prob:P}. However, while this problem is convex, problem \eqref{prob:MNP'} inherently does not satisfy the necessary regularity conditions (\textit{e.g} Slater's condition) to apply strong duality or KKT conditions \cite[Theorem 3.1.17]{Convex_Opt_Book}. Consequently, off-the-shelf algorithms to solve this problem that rely on primal-dual relations, such as interior-point algorithm, may fail.
Moreover, even when $\omega$ is smooth, classical first-order algorithms, such as projected gradient, cannot be applied because the orthogonal projection over $X^*=\Lev_{\varphi}(\varphi^*)$ may be hard to compute. These difficulties persist even when $\varphi^*$ is approximated to high accuracy, and numerical difficulties prevent solving it using existing algorithms.

\subsection{Existing methods for Bilevel convex optimization}
During the last two decades, several first-order methods have been suggested to solve \eqref{prob:MNP}. We highlight some of them here.
An extensive review of algorithms for simple bilevel optimization is available in \cite{SBP}.

One class of algorithms for solving problem \eqref{prob:MNP} is based on applying a first-order operator (or a sequence of them) on the regularized problem  
\begin{equation*}\phi_k(\bx)=\varphi(\bx)+\lambda_k \omega(\bx),\end{equation*}
at each iteration $k$, where the regularization parameter sequence $\{\lambda_k\}_{k\in\mathbb{N}}$ satisfies $\lim_{k\rightarrow \infty}\lambda_k=0$ and $\sum_{k=1}^\infty \lambda_k=\infty$. The algorithms in this class differ from each other in their assumptions and the operator used.
In \cite{Solodov}, Solodov suggests the Iterative Regularized Projected Gradient (IR-PG) method. IR-PG applies a projected gradient step
to $\phi_k$ at iteration $k$, subject to the additional assumptions that  $\omega$ is continuously differentiable with an $L_\omega$-Lipschitz continuous gradient and $g(\bx)=\delta_C(\bx)$, where $\delta_C$ is the indicator function of a closed convex set $C$.\footnote{\label{note1}Both IR-PG and MNG, as well as their convergence results, can easily be extended to the case where $g$ is a proximal friendly function \cite[chapter 6]{FO_Book}.} In \cite{helou2017},  Helou and Sim\~{o}es suggest to apply a three step variation of the $\epsilon$-subgradient method, under the additional assumptions that $f$ and $\omega$ have bounded (sub)gradients, and $g=\delta_C(\bx)$ as before. In this algorithm, the first step is an accelerated-gradient step on $f$, the second step is a subgradient step or proximal step on $\omega$, and the third step is projection onto set $C$. 
These two works are first-order variants of an algorithm first suggested by Cabot \cite{Cabot2005} and recently extended by Dutta and Pandit \cite{SBP}, in which a proximal point step is applied to $\phi_k$ at each iteration.  Although the proximal point algorithm is very general, as it does not assume any structure on either $\varphi$ or $\omega$, its iterations may be extremely computationally expensive, as the proximal operator of a sum of convex functions is generally not simple to calculate even when computing the proximal operator for each function individually is easy \cite{prox_sum}. Moreover, all the methods mentioned above prove asymptotic convergence to a solution of the bilevel problem, however they do not provide rates for this convergence.
In \cite{MNG}, Beck and Sabach present the Minimal Norm Gradient (MNG) method for the case where $\omega$ is a $\sigma_\omega$-strongly convex and continuously differentiable function with an $L_\omega$-Lipschitz continuous gradient, for example $\omega(\bx)=\|\bx\|_2^2$. This is the first method to provide any rate of convergence result for problem~\ref{prob:MNP}. MNG uses a notion of cutting planes, where at each iteration two half-spaces are constructed, and the \textit{outer} function is  minimized on the intersection of these two half-spaces. They show that MNG converges to the optimal solution of the bilevel problem and has an $O(1/\sqrt{k})$ convergence rate for the \textit{inner} problem, that is, for obtaining a feasible solution for the bilevel problem. It is also important to notice that even though constructing the half-spaces involves standard first-order computations, the method requires solving an optimization problem in each iteration, which in some cases cannot be done analytically.\cref{note1}% Similarly to \cite{Solodov}, though the method considers only an indicator function, it can easily be extended to include cases where $g$ is a proximal friendly function.

In \cite{Big_SAM}, under similar assumptions to \cite{MNG}, Sabach and Shtern suggested the Bilevel Gradient Sequential Averaging Method (BiG-SAM). The algorithm takes a convex combination between a proximal gradient step over the \textit{inner} function and a gradient step over the \textit{outer} function, \textit{i.e,} the general step of the method is \begin{equation*}
    \bx^{k+1}=\alpha_k(\bx^k-s\nabla \omega(\bx^k))+(1-\alpha_k)\prox_{tg}(\bx^k-t\nabla f(\bx^k)),
\end{equation*}
where the sequence $\{\alpha_k\}$ satisfies the following properties:
$\lim_{k\rightarrow\infty}\alpha_k=0$, $\sum_{k=1}^\infty\alpha_k=\infty$, $\lim_{k\rightarrow\infty}\frac{\alpha_k}{\alpha_{k+1}}=1.$ 
Their algorithm uses the non-expansiveness of the proximal-gradient operator over functions with  Lipschitz-continuous gradient, and the contraction property of the gradient step over strongly-convex functions with Lipschitz-continuous gradient to get a $O(1/k)$ convergence rate for the \textit{inner} problem.
The authors also show that through smoothing, their algorithm can be adapted to the case where $\omega$ is the sum of a Lipschitz-continuous but nonsmooth function and a smooth strongly convex function, \textit{e.g.}, the elastic net function $\omega(\bx)=\|\bx\|_1+\rho\|\bx\|_2^2$. They show that, in this case, the algorithm converges to the optimal solution of \eqref{prob:MNP} where $\omega$ is replaced by its Moreau envelope \cite[chapter 6.7]{FO_Book}, and the  iteration complexity for obtaining an $\varepsilon$-feasible solution for the bilevel problem is  $O(\frac{1}{\varepsilon\delta^2})$, where $\delta>0$ is the required uniform accuracy on the approximation of the \textit{outer} objective function.
In \cite{iBiG}, Shehu et al. present iBiG-SAM, a variation of BiG-SAM with an inertial step. While the convergence of iBiG-SAM was proven without a convergence rate, \cite{iBiG} provides several numerical examples indicating it may outperform BiG-SAM.

In \cite{Incremental}, Amini and Yousefian extended the IR-PG method for the case where $\omega$ is strongly convex but not necessarily differentiable (for example the 'elastic net' function). They also deviate from the classical composite model for $\varphi$, and assume instead that $f(\bx)=\sum_{i=1}^m f_i(\bx)$, where $f_i$ are proper, closed, real-valued, and convex for all $i\in\{1,\ldots,m\}$, and $g=\delta_C$ where $C$ is compact. Each iteration of their algorithm, named Iterative Regularized Incremental projected subGradinet (IR-IG) method, is an incremental projected subgradient step \cite[section 8.4]{FO_Book} over the sum of functions $\sum_{i=1}^m(f_i(\bx)+\frac{\lambda_k}{m}\omega(\bx))$. The authors showed that by choosing stepsizes and $\lambda_k$ properly, the algorithm converges to the solution of the problem, and the inner problem value converges at a rate of $O(1/k^{0.5-\beta})$ for any fixed $\beta\in(0,0.5)$.

As seen above, existing algorithms require $\omega$ to be strongly convex in order to obtain rate guarantees. Moreover, all existing rates are in terms of inner-function objective, that is, in terms of feasibility rather than optimality of problem  \eqref{prob:MNP}.

%, where Dutta and Pandit also suggest to adapt the IR-PG approach to a more general setting by replacing the projected gradient step with approximated proximal point step. Although this method is very general, as it does not assume any structure on either $\varphi$ or $\omega$, each iteration of this method may be extremely computationally expensive, as the proximal operator of general sum of convex functions is not simple to calculate even when computing the proximal operator for each function individually is easy \cite{prox_sum}. Similarly to IR-PG, no convergence rate guarantees are provided for this method.

\subsection{Contribution}
 In this paper, we present a new optimization scheme called \textit{ITerative Approximation and Level-set EXpansion} (ITALEX) to solve the bilevel problem \eqref{prob:MNP}. The method alternates between applying an approximate optimization oracle to solve the problem of minimizing a surrogate inner function $\hat{\varphi}^\alpha$ over a given level-set of the outer function $\omega$, and enlarging the aforementioned level-set. By carefully enlarging the level-set we guarantee the method will provide an $\varepsilon$-feasible and $\sqrt{\varepsilon}$-optimal solution for the bilevel problem, that is, for any $\varepsilon>0$ the method outputs $\tilde{\bx}$ such that
 \begin{equation}\label{eq:result}
 \varphi(\tilde{\bx})\leq \varphi^*+\varepsilon,\,  \omega(\tilde{\bx})- \omega^*\leq O(\sqrt{\varepsilon}).\end{equation}
% This convergence can be obtained under mild assumptions on the structure of $\varphi$ and $\omega$. Specifically, we do not require $\omega$ to be differentiable or strongly convex.
% Moreover, we show that we can use popular first-order methods, such as generalized conditional gradient and proximal gradient, as our approximate optimization oracle. %, to obtain a total iteration complexity of $O(1/\varepsilon)$. 
 To summarize the novelty and contribution of the paper:
\begin{itemize}
    \item We propose a new scheme to solve bilevel problems, called ITALEX, that assumes neither strong convexity nor differentiability of $\omega$, and can be easily applied to important bilevel problems, such as ones where $\omega$ is an $l_p$ norm.
    \item ITALEX is the first scheme to provide a rate of convergence for the outer function's optimality gap. 
    \item We prove that under our mild assumptions, using generalized conditional gradient or proximal gradient as our approximate optimization oracle within ITALEX results in an iteration complexity of $O(1/\varepsilon)$ to obtain an $\varepsilon$-feasible and $\sqrt{\varepsilon}$-optimal solution. 
    {In particular, our assumptions on the outer function $\omega$ generalize the setting in which it is strongly convex.}
    Thus, ITALEX achieves the best known feasibility convergence rate, previously obtained by BiG-SAM, which utilizes the stronger assumption that $\omega$ is both smooth and strongly convex.  %{Since our analysis also holds for the strongly convex outer function, and therefore, for this case, our algorithm reproduces the rate of BiG-SAM for the inner function, while adding guarantees on the outer function convergence.}
    \item We show that a variation of our scheme can be used when $\varphi$ is smooth in order to also guarantee super-optimality with respect to the \textit{outer} problem, \textit{i.e,} 
    $\omega(\tilde{\bx})\leq\omega^*$. %To the best of our knowledge, all existing methods only prove the convergence $\lim_{k\rightarrow\infty}\omega({\bx}^k)=\omega^*$}
    \item We demonstrate the applicability of this approach in numerical experiments, where we compare ITALEX to existing methods.
\end{itemize}

A comparison between the assumptions and convergence rates of ITALEX and other first-order methods that provide rate of convergence results is presented in Table~\ref{table:Comparison}.

\begin{table}[ht]
\begin{minipage}{\textwidth}
\centering
\small
\begin{tabular}{p{1.7cm}| p{3.5cm} p{3.5cm} p{2cm} p{2cm}}
 \hline
 Method & \raggedright{$\varphi=f+g$\\ properties} &$\omega$ properties & Convergence rate for \eqref{prob:P} & Convergence to $\omega^*$ \\ [0.5ex] 
 \hline\hline
% IR-PG \cite{Solodov} & Classical composite & Smooth & Asymptotic & Asymptotic\\\hline
 MNG \cite{MNG} & Classical composite & \raggedright{Smooth, strongly convex} & $O\left(\frac{1}{\sqrt{k}}\right)$ & Asymptotic\\ \hline
 BiG-SAM \cite{Big_SAM} & Classical composite & \raggedright{Smooth, strongly convex} & $O\left(\frac{1}{{k}}\right)$ & Asymptotic\\ \hline
 IR-IG \cite{Incremental} & \raggedright{$f$ is a finite sum, $g=\delta_C$, $C$ compact} & \raggedright{Strongly convex} & $O\left(\frac{1}{k^{0.5-\beta}}\right)$  $\beta\in(0,0.5)$ & Asymptotic \\\hline
 %SBP \cite{SBP} & General & General & Asymptotic & Asymptotic \\ \hline
\multirow{3}{1.5cm}{{\textbf{ITALEX}} [This paper]} &  \raggedright{Classical composite}& \multirow{3}{3.5cm}{\raggedright{Norm-like function (Assumption \ref{assumption:omega})}} %Compact level sets, error bound over  $C$. }&
& \multirow{3}{2cm}{{$O\Bigl(\frac{1}{k}\Bigr)$ }}& $O\left(\frac{1}{\sqrt{k}}\right)$\\%
\cline{2-2}\cline{5-5}
&Smooth&  & &{\raggedright{Super-optimal}}\\
\hline
\end{tabular}
\end{minipage}
\caption{Comparison between bilevel optimization methods}
\label{table:Comparison}
\end{table}

\subsection{Mathematical notations}
In this paper we use the following standard notation: 
Vectors and matrices are written in bold. Given closed and convex set $C$, we denote its indicator function by $\delta_C$, the orthogonal projection of $\bx$ onto $C$ is denoted by $\proj_C(\bx)=\argmin\{\|\by-\bx\|^2:\by\in C\}$ and the distance between $\bx$ and $C$ is denoted as $\dist(\bx,C)$. Moreover, if $C$ is compact we can define its diameter $\mathcal{D}_C={\max}\{\norm{\bu-\bv}:\bu,\bv\in C\} $. Given function $\omega$, we denote its $\alpha$-Level set as $\Lev_{\omega}(\alpha)=\{\bx:\omega(\bx)\leq\alpha\},$ its domain as $\dom(\omega)$ and its epigraph as $\epi(\omega)$. The subdifferential set at point $\bx$ is denoted as $\partial \omega(\bx)$.
Unless stated otherwise $\|\cdot\|$  and $\langle \cdot,\cdot\rangle$ denote the euclidean norm and inner product, respectively. Given a positive definite matrix $\bQ$ we denote $\|\bx\|_\bQ\equiv \sqrt{\bx^{\top}\mathbf{ Q x}}$. 
{For any vector $\bx\in \Real^n$, and scalar $r>0$, we denote the closed ball of radius $r$ centered in $\bx$  by $\mathcal{B}(\bx,r)=\{\by\in \Real^n:\norm{\by-\bx}\leq r\}$.}
For a matrix $\mathbf{D}$ we denote its minimal and maximal eigenvalue as $\lambda_{\min}(\mathbf{D})$ and $\lambda_{\max}(\mathbf{D})$, respectively. The set $\{1,2,...,m\}$ {for some integer $m$} is denoted as $[m]$, and for a real number $r$, $[r]_+=\max\{r,0\}$.
\section{ Preliminaries}\label{section:preliminaries}

\subsection{Generalized Conditional and Proximal Gradient Methods and the Surrogate Optimality Gap}
In our analysis we consider a continuous optimality measure related to some optimization algorithm. One possible continuous optimality measure, associated with the generalized conditional gradient algorithm, is the surrogate optimality gap. 
\begin{definition}\label{S_def}  \cite[Definition 13.2]{FO_Book}: The surrogate optimality gap for a function $\varphi\equiv f+g$, where $f$ is convex, continuously differentiable with $L_f$-Lipschitz continuous gradient, and $g$ is an extended real-valued convex function with compact domain, is given by
\begin{equation}\label{eq:S_def}
S(\bx)=\max_{\bp}\Bigl\{\Bigl\langle\nabla f(\bx),\bx-
\bp\Bigr\rangle+g(\bx)-g(\bp)\Bigr\}
\end{equation}
\end{definition}
The surrogate optimality gap has a dual role in our analysis as both an optimality measure and as a sufficient decrease measure. Indeed, the surrogate optimality gap $S(\bx)$ provides an upper bound on the true optimality gap, as shown in the following lemma,
\begin{lemma}\label{S_lemmas}{\bf{\cite[Lemma 13.12, Theorem 13.6]{FO_Book}}} For any $\bx\in\dom(g)$
\begin{equation*} S(\bx)\geq \varphi(\bx)-\min_{{\by}\in\Real^n}\{\varphi(\by)\},\end{equation*} moreover, $S(\bx)=0$ if and only if $\bx$ is {a minimizer} of $\varphi$.
\end{lemma}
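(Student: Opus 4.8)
The plan is to establish the inequality first and then deduce both directions of the equivalence from it together with one elementary observation. Throughout, let $\bx^*\in\argmin_{\by\in\Real^n}\varphi(\by)$; such a minimizer exists because $g$ is closed with compact domain, so $\varphi=f+g$ is a closed function attaining its minimum on the compact set $\dom(g)$.

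For the inequality, I would start from the gradient inequality for the convex differentiable function $f$, namely $f(\bx^*)\geq f(\bx)+\langle\nabla f(\bx),\bx^*-\bx\rangle$, which rearranges to $f(\bx)-f(\bx^*)\leq\langle\nabla f(\bx),\bx-\bx^*\rangle$. Adding $g(\bx)-g(\bx^*)$ to both sides gives
\begin{equation*}
\varphi(\bx)-\varphi(\bx^*)\leq\langle\nabla f(\bx),\bx-\bx^*\rangle+g(\bx)-g(\bx^*).
\end{equation*}
The right-hand side is exactly the objective in the definition \eqref{eq:S_def} of $S(\bx)$ evaluated at the feasible choice $\bp=\bx^*$, and since $S(\bx)$ is the maximum over all $\bp$, it dominates this particular value. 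Hence $\varphi(\bx)-\varphi^*\leq S(\bx)$, which is the claimed bound.

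The elementary observation is that $S(\bx)\geq 0$: choosing $\bp=\bx$ in \eqref{eq:S_def} makes the inner product vanish and cancels $g(\bx)-g(\bx)$, so the maximand is $0$ at this point, forcing the maximum to be nonnegative. With this in hand, the ``$S(\bx)=0$ implies minimizer'' direction is immediate: the inequality gives $\varphi(\bx)-\varphi^*\leq 0$, and since $\varphi^*$ is the minimum we obtain $\varphi(\bx)=\varphi^*$, i.e.\ $\bx$ is a minimizer.

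For the converse, suppose $\bx$ minimizes $\varphi$. By Fermat's rule together with the subdifferential sum rule $\partial\varphi(\bx)=\nabla f(\bx)+\partial g(\bx)$ (valid because $f$ is real-valued and differentiable everywhere), optimality is equivalent to $-\nabla f(\bx)\in\partial g(\bx)$. The subgradient inequality for $g$ then reads $g(\bp)\geq g(\bx)-\langle\nabla f(\bx),\bp-\bx\rangle$ for every $\bp$, which rearranges to $\langle\nabla f(\bx),\bx-\bp\rangle+g(\bx)-g(\bp)\leq 0$ for all $\bp$; taking the maximum over $\bp$ yields $S(\bx)\leq 0$, and combined with $S(\bx)\geq 0$ we conclude $S(\bx)=0$. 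The only step requiring more than a one-line justification is the applicability of the sum rule, which is the main (though standard) technical point; everything else follows from convexity and the definition of $S$.
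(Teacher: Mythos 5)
Your proof is correct. The paper does not prove this lemma itself---it is quoted directly from the cited first-order methods text---and your argument (the gradient inequality of $f$ at a minimizer $\bx^*$ combined with the feasibility of $\bp=\bx^*$ for the lower bound, the choice $\bp=\bx$ for nonnegativity of $S$, and Fermat's rule with the subdifferential sum rule for the converse direction) is precisely the standard argument underlying that citation, so there is nothing to correct or compare.
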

To view the role of $S(\bx)$ as a sufficient decrease measure we turn to the \textbf{Generalized Conditional Gradient} (GCG) method, in which $S(\bx)$ arises naturally as part of the convergence analysis. The GCG method \cite{GCG} was developed for optimizing composite functions, such that $\dom(g)$ is compact. The GCG updating step is given by
% \begin{equation*}
%     \bx^{k+1}=\bx^k+\eta_k\left(\bp(\bx^k)-\bx^k\right), \quad \text{where } \bp(\bx)\in\argmin\left\{\langle\nabla f(\bx),\bp\rangle+g(\bp)\right\}.
% \end{equation*}
\begin{equation*}
    \bx^{k+1}=\bx^k+\eta_k\left(\bp(\bx^k)-\bx^k\right),\end{equation*}
    where \begin{equation*} \bp(\bx)\in\argmin\left\{\langle\nabla f(\bx),\bp\rangle+g(\bp)\right\}.
\end{equation*}
Note that $\bp(\bx)$ is a maximizer of the optimization problem defined in \eqref{eq:S_def}. The following result shows that for a specific step-size choice, $S(\bx)$ serves as a sufficient decrease measure for the GCG step.  

\begin{lemma} \label{S_lemmas_sufficient_decrease}{\bf{\cite[Lemma 13.8]{FO_Book}}}: Let $\{\bx^k\}_{k\in\mathbb{N}}$ be the sequence generated by the GCG method with adaptive stepsize $\eta_k=\min\left\{1,\frac{S(\bx^k)}{L_f\|\mathbf{p}(\bx^k)-\bx^k\|^2}\right\}$ or with stepsize chosen by exact line search. Then, \begin{equation*}
\varphi(\mathbf{\bx}^k)-\varphi(\mathbf{x}^{k+1})\geq\frac{1}{2}\min\left\{S(\bx^k),\frac{S^2(\bx^k)}{L_f \mathcal{D}_{\dom(g)}^2}\right\}.\end{equation*}
%where D is the diameter of the domain of $g$.
\end{lemma}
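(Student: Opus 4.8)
The plan is to combine the descent lemma for $f$ with the convexity of $g$ to obtain a one-dimensional quadratic upper bound on $\varphi$ along the GCG direction, and then to maximize the resulting guaranteed decrease over the stepsize. Throughout I fix an iteration $k$ and abbreviate $\bp=\bp(\bx^k)$ and $\mathbf{d}=\bp-\bx^k$, so that the candidate iterate is $\bx^k+\eta\mathbf{d}$ for $\eta\in[0,1]$.

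First I would estimate each summand of $\varphi$ separately. Applying the descent lemma to $f$, which is justified by the $L_f$-Lipschitz continuity of $\nabla f$, gives $f(\bx^k+\eta\mathbf{d})\leq f(\bx^k)+\eta\langle\nabla f(\bx^k),\mathbf{d}\rangle+\tfrac{L_f\eta^2}{2}\|\mathbf{d}\|^2$, while the convexity of $g$ together with $\eta\in[0,1]$ yields $g(\bx^k+\eta\mathbf{d})\leq(1-\eta)g(\bx^k)+\eta g(\bp)$. Adding these and collecting the terms linear in $\eta$, the bracket that appears is exactly $\langle\nabla f(\bx^k),\bp-\bx^k\rangle+g(\bp)-g(\bx^k)=-S(\bx^k)$, because $\bp(\bx^k)$ is by definition a maximizer of the problem defining $S$ in \eqref{eq:S_def}. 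This produces the clean estimate
\[\varphi(\bx^k)-\varphi(\bx^k+\eta\mathbf{d})\;\geq\;\eta\,S(\bx^k)-\tfrac{L_f\eta^2}{2}\|\mathbf{d}\|^2,\]
valid for every $\eta\in[0,1]$.

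The remaining step is to maximize this concave quadratic in $\eta$ over $[0,1]$. Its unconstrained maximizer is $\eta^\star=S(\bx^k)/(L_f\|\mathbf{d}\|^2)$, which is precisely the adaptive stepsize $\eta_k$ when $\eta^\star\leq1$. I would split into two cases according to whether $S(\bx^k)\leq L_f\|\mathbf{d}\|^2$ or not. In the interior case $\eta_k=\eta^\star$ substitutes to give a decrease of $S^2(\bx^k)/(2L_f\|\mathbf{d}\|^2)$, and using $\|\mathbf{d}\|=\|\bp-\bx^k\|\leq\mathcal{D}_{\dom(g)}$ (both points lie in $\dom(g)$) bounds this below by $S^2(\bx^k)/(2L_f\mathcal{D}_{\dom(g)}^2)$. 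In the boundary case $\eta_k=1$, and since there $\tfrac{L_f}{2}\|\mathbf{d}\|^2<\tfrac12 S(\bx^k)$, the decrease exceeds $\tfrac12 S(\bx^k)$. Taking the smaller of the two guaranteed decreases gives exactly the claimed $\tfrac12\min\{S(\bx^k),\,S^2(\bx^k)/(L_f\mathcal{D}_{\dom(g)}^2)\}$.

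Finally, for the exact line search variant I would note that line search selects the $\eta\in[0,1]$ minimizing $\varphi(\bx^k+\eta\mathbf{d})$, hence attains a decrease at least as large as the adaptive stepsize does, so the same bound transfers verbatim. No single step is genuinely hard here; the main points requiring care are correctly identifying the bracketed linear term as $-S(\bx^k)$ through the optimality of $\bp(\bx^k)$, and organizing the two stepsize regimes so that the diameter bound $\|\mathbf{d}\|\leq\mathcal{D}_{\dom(g)}$ is invoked only in the regime where the interior optimum is used.
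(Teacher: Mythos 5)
Your proof is correct: the paper itself gives no proof of this lemma, citing it directly from \cite[Lemma 13.8]{FO_Book}, and your argument (descent lemma on $f$, convexity of $g$ to identify the linear coefficient as $-S(\bx^k)$, then the two-regime optimization of the quadratic in $\eta$ with the diameter bound used only in the interior regime) is exactly the standard proof given in that reference.
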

Thus, the connection between the surrogate optimality gap $S(\bx)$ and the GCG method, naturally leads to the use of the GCG method as a descent scheme in our method.

Another commonly used descent scheme that we are going to employ as part of our method is the \textbf{Proximal Gradient} (PG) method. The PG method can also be applied on the classical composite model discussed above. The PG method consists of the following update step \begin{equation}\label{eq:Proximal_Gradien}
    \bx^{k+1}=T(\bx^k)\equiv \prox_{\frac{1}{L_k}g}\left(\bx^k-\frac{1}{L_k}\nabla f(\bx^k)\right),
\end{equation}
where \begin{equation}
    \prox_g(\bx)=\argmin_{\mathbf{u}\in\Real^n}\left\{g(\mathbf{u})+\frac{1}{2}\|\bx-\mathbf{u}\|^2\right\},
\end{equation}
and $L_k$ is either constant in $[L_f,\infty)$ or chosen by a backtracking procedure.
The surrogate optimality gap $S(\bx)$ can also be connected to the proximal gradient method updating step by \cite[Section 2.2.2]{PDA}.
Since $S(\bx)$ is not computed as part of the PG method, we can establish another optimality measure $\tilde{S}(\bx)$, which only depends on $T(\bx)$. Utilizing Lemma~\ref{S_lemmas} and \cite[Lemma 2.6]{PDA}, we show that, similarly to the relation between $S(\bx)$ and the GCG method, $\tilde{S}(\bx)$ also serves as a sufficient decrease measure for the PG method. {
\begin{lemma}\label{corollary:proximal_bound}
        Let $\{\bx^k\}_{k\in\mathbb{N}}$ be the sequence generated by the PG method as in \eqref{eq:Proximal_Gradien}. Assume that 
        $D(\bx^k)\equiv\mathcal{D}_{\Lev_{\varphi}(\varphi(\bx^k))}\leq\bar{D}<\infty$ for all $k\in\mathbb{N}$, and denote $$\tilde{S}(\bx^k)=2\max\left\{\varphi(\bx^k)-\varphi\left(\bx^{k+1}\right),\sqrt{\frac{L_k}{2}D(\bx^k)^2\left(\varphi(\bx^k)-\varphi\left(\bx^{k+1}\right)\right)}\right\}. $$
        {Let $\varphi^*=\min_{\by\in \Real^n}\varphi(\by)$.}
        Then, for all $ k\in \mathbb{N}$ \begin{equation*}
        \varphi(\bx^k)-{\varphi^*}
        %\leq  S(\bx^k)
        \leq \tilde{S}(\bx^k).
        \end{equation*}
    \end{lemma}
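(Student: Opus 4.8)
The plan is to sandwich the true optimality gap between the surrogate gap and $\tilde{S}(\bx^k)$, i.e.\ to prove the chain $\varphi(\bx^k)-\varphi^*\leq S(\bx^k)\leq \tilde{S}(\bx^k)$. The left inequality is exactly Lemma~\ref{S_lemmas}, so the whole content is the right inequality $S(\bx^k)\leq\tilde{S}(\bx^k)$. To get it, I would first establish a proximal-gradient analogue of the GCG sufficient-decrease estimate in Lemma~\ref{S_lemmas_sufficient_decrease}. Concretely, invoking \cite[Lemma 2.6]{PDA} together with the monotonicity $\varphi(\bx^{k+1})\leq\varphi(\bx^k)$ of the PG iterates, I expect to obtain
\begin{equation*}
\varphi(\bx^k)-\varphi(\bx^{k+1})\geq\frac{1}{2}\min\left\{S(\bx^k),\frac{S(\bx^k)^2}{L_k D(\bx^k)^2}\right\},
\end{equation*}
where the domain diameter $\mathcal{D}_{\dom(g)}$ appearing in the GCG bound is replaced by the sublevel-set diameter $D(\bx^k)$. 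This replacement is legitimate precisely because the PG step is monotone, so all the comparison points remain inside $\Lev_{\varphi}(\varphi(\bx^k))$, whose diameter is finite by the standing assumption $D(\bx^k)\leq\bar{D}<\infty$.

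With this decrease inequality available, the second step is a routine inversion. Writing $\Delta_k:=\varphi(\bx^k)-\varphi(\bx^{k+1})$ and distinguishing the two cases according to which argument attains the minimum: if $S(\bx^k)\leq L_k D(\bx^k)^2$ the minimum equals $S(\bx^k)^2/(L_k D(\bx^k)^2)$, and solving $\Delta_k\geq S(\bx^k)^2/(2L_k D(\bx^k)^2)$ gives $S(\bx^k)\leq\sqrt{2L_k D(\bx^k)^2\Delta_k}$; if $S(\bx^k)> L_k D(\bx^k)^2$ the minimum equals $S(\bx^k)$, and $\Delta_k\geq S(\bx^k)/2$ gives $S(\bx^k)\leq 2\Delta_k$. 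Combining the two cases,
\begin{equation*}
S(\bx^k)\leq\max\left\{2\Delta_k,\sqrt{2L_k D(\bx^k)^2\Delta_k}\right\}=2\max\left\{\Delta_k,\sqrt{\frac{L_k}{2}D(\bx^k)^2\Delta_k}\right\}=\tilde{S}(\bx^k),
\end{equation*}
where the middle equality just pulls the factor $2$ inside the square root. Chaining this with Lemma~\ref{S_lemmas} then yields $\varphi(\bx^k)-\varphi^*\leq\tilde{S}(\bx^k)$, as claimed.

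The main obstacle I anticipate is the first step rather than the inversion: correctly importing \cite[Lemma 2.6]{PDA} and verifying that its hypotheses hold in our setting, where $\dom(g)$ need not be compact. In the GCG analysis compactness of $\dom(g)$ supplies a finite diameter for free, whereas here I must instead lean on the monotonicity of the PG iterates together with the assumption $D(\bx^k)\leq\bar{D}<\infty$ in order to replace $\mathcal{D}_{\dom(g)}$ by $D(\bx^k)$ while keeping the estimate valid. I would also need to confirm that the descent estimate holds uniformly for the $L_k$ actually used at iteration $k$ (a constant in $[L_f,\infty)$ or one produced by backtracking), since $L_k$ enters $\tilde{S}$ explicitly. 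Once these points are settled, the remaining case analysis and the final application of Lemma~\ref{S_lemmas} are elementary.
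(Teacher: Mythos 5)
Your overall architecture --- sandwich the optimality gap between a surrogate gap and $\tilde{S}(\bx^k)$, obtain a quadratic-in-$t$ decrease estimate from the PDA framework, and invert it via the optimal step size (your two-case analysis is a correct way to do this) --- matches the paper's, and your inversion step is fine. The gap is in the first step, exactly where you flag uncertainty, and it is not merely a hypothesis to be checked: the chain $\varphi(\bx^k)-\varphi^*\leq S(\bx^k)\leq\tilde{S}(\bx^k)$ with the \emph{global} surrogate gap $S(\bx^k)$ of Definition~\ref{S_def} cannot work here, because that gap is a maximum over all of $\dom(g)$, which in the PG setting need not be compact, so $S(\bx^k)$ may equal $+\infty$ while $\tilde{S}(\bx^k)$ is finite. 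Likewise, the decrease inequality you display pairs the global $S(\bx^k)$ with the local diameter $D(\bx^k)$; the PDA estimate controls the quadratic term by $\norm{\bp(\bx^k)-\bx^k}^2$, where $\bp(\bx^k)$ is the maximizer defining the surrogate gap, and that maximizer has no reason to lie in $\Lev_{\varphi}(\varphi(\bx^k))$ --- monotonicity of the PG iterates constrains $\bx^{k+1}$, not the comparison points in the max.

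The missing idea, which is the paper's actual device, is to localize the surrogate gap itself: define $S_{D(\bx)}(\bx)$ as the surrogate gap of the auxiliary function $\hat{\varphi}_{\bx}=\varphi+\delta_{\mathcal{B}(\bx,D(\bx))}$, i.e.\ restrict the maximization to the ball of radius $D(\bx)$ around $\bx$. This quantity is finite; it still upper-bounds $\varphi(\bx^k)-\varphi^*$ by Lemma~\ref{S_lemmas} because the minimizer $\bx^*$ lies in $\Lev_{\varphi}(\varphi(\bx^k))$ and hence in the ball, so it also minimizes $\hat{\varphi}_{\bx^k}$; and the PG step $T(\bx^k)$ coincides with the PG step for $\hat{\varphi}_{\bx^k}$ because PG is a descent method, so $T(\bx^k)$ already lies in the ball. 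With this substitution the PDA estimate
\begin{equation*}
\varphi(T(\bx))\leq\varphi(\bx)-t\,S_{D(\bx)}(\bx)+\frac{L_k D(\bx)^2}{2}t^2,\qquad t\in[0,1],
\end{equation*}
is legitimate, and your inversion then goes through verbatim to give $S_{D(\bx^k)}(\bx^k)\leq\tilde{S}(\bx^k)$. So your proof is repairable by replacing $S$ with $S_{D(\bx)}$ throughout, but as written the key inequality is unjustified, and in the unbounded-domain case false.
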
}
    \begin{proof}
        {Let $\bx^*$ be an optimal solution of $\varphi$. Thus,  $\bx^*,\bx^{k}\in \Lev_{\varphi}(\varphi(\bx^k))$, and $\norm{\bx^k-\bx^*}\leq D(\bx^k)$. 
        % Therefore, Lemma~\ref{S_lemmas} holds with $S_{D(\bx)}$ replacing $S$.
        For any given point $\bx\in\dom(g)$, define $S_{D(\bx)}(\bx)=\min_{\bp\in\mathcal{B}(\bx,D(\bx))}\{\langle\nabla f(\bx),\bp\rangle+g(\bp)\}$, which can be viewed as the surrogate optimality gap of the function $\hat{\varphi}_{\bx}=\varphi+\delta_{\mathcal{B}(\bx, D(\bx))}$. Since $\bx^*\in \mathcal{B}(\bx, D(\bx))$, it is also  a minimizer of $\hat{\varphi}_{\bx}$, and by Lemma~\ref{S_lemmas} we have $S_{D(\bx)}(\bx)\geq \hat{\varphi}_{\bx}(\bx)-\hat{\varphi}_{\bx}(\bx^*)={\varphi}(\bx)-{\varphi}(\bx^*)$.
       Since PG is a descent method, $T(\bx)\in\Lev_{\varphi}(\varphi(\bx))$ \cite[Lemma 10.14]{FO_Book}, and so it is equal to the proximal gradient step applied to {$\hat{\varphi}_{\bx}$}, \ie\;} 
\begin{equation*}
   T(\bx)=\argmin_{\bu}\{f(\bx)+\langle \nabla f(\bx), \mathbf{u}-\bx\rangle +\frac{L_f}{2}\|\mathbf{u}-\bx\|^2+g(\mathbf{u})+\delta_{\mathcal{B}(\bx,D(\bx))}(\bu)\}.
\end{equation*}
 Thus, we can conclude directly from the definition of PDA step in \cite[Definition 2.1]{PDA} and the fact that the proximal gradient step is a $1$-PDA \cite[Section 2.2.2]{PDA}, that for any $t\in[0,1]$ 
 \begin{equation*} \varphi(T(\bx))\leq\varphi(\bx)-t\cdot S_{D(\bx)}(\bx)+ \frac{L_k D(\bx)^2}{2}t^2,\; \forall t\in[0,1],
            \end{equation*}
          Choosing $t=\min\left\{1,\sqrt{\frac{2(\varphi(\bx^k)-\varphi(\bx^{k+1}))}{L_f D(\bx^k)^2}}\right\}$
        derives the wanted result.
    \end{proof}
    Lemma~\ref{corollary:proximal_bound} is the proximal gradient analogue to Lemma~\ref{S_lemmas}. Its proof uses the decrease property of the PG method to show that the defined $\tilde{S}(\bx)$ is indeed an optimality measure.
\subsection{Error bounds} \label{subsection:errorbounds}
In our analysis, we utilize a global error-bound on the function $\omega$. For this we assume that $\omega$ is bounded from below by $\ubar{\omega}=\inf_{\bx\in\Real^n}(\omega(\bx))<\infty$.
% over convex set $C$.
The global error-bound bounds the distance between a point and a level set of a function, using the difference in the function's values.\begin{definition}\label{def:error_bound}
    For a convex function $\omega$ 
    % over domain $C$ 
    we say it has a {$\kappa$-power} $\gamma$-global error-bound for some {$\kappa\in(0,2]$ and} $\gamma>0$ if for any $\bx\in\Real^n$ and any $\alpha\geq \ubar{\omega}$
    % $\bx\in C$
%     \begin{equation}\label{eq:def_error_bound}
%      \dist(\bx,\Lev_{\omega}(\alpha)\cap C)\leq \gamma [\omega(\bx)-\alpha]_+.
% \end{equation}
    \begin{equation*}\label{eq:def_error_bound}
     {\dist(\bx,\Lev_{\omega}(\alpha)){^\kappa}\leq \gamma [\omega(\bx)-\alpha]_+.}
\end{equation*}
\end{definition}
{Note that if $\omega$ is real-valued $\sigma$-strongly convex function, it has such an error bound with $\kappa=2$.}
    {\begin{proposition}
    Let $\omega$ be a $\sigma$-strongly convex function, then $\omega$ has a global error-bound with $\kappa=2,\gamma=\frac{2}{\sigma}.$
    \end{proposition}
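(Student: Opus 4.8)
The plan is to reduce to the nontrivial case and then combine the variational properties of the metric projection onto the level set with the strong convexity inequality. First I would dispose of the case $\omega(\bx)\le\alpha$: here $\bx\in\Lev_\omega(\alpha)$, so both sides of the claimed inequality vanish and there is nothing to prove. For the remaining case $\omega(\bx)>\alpha$, note that since $\omega$ is real-valued and $\sigma$-strongly convex it is coercive and attains its infimum, so $\Lev_\omega(\alpha)$ is a nonempty closed convex set (it contains the minimizer because $\alpha\ge\ubar{\omega}$). Hence the projection $\bar\bx=\proj_{\Lev_\omega(\alpha)}(\bx)$ exists, is unique, and satisfies $\dist(\bx,\Lev_\omega(\alpha))=\|\bx-\bar\bx\|$.

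The next step is to extract a subgradient of $\omega$ at $\bar\bx$ that points outward. I would first argue that $\bar\bx$ lies on the boundary of the level set, i.e. $\omega(\bar\bx)=\alpha$: if $\omega(\bar\bx)<\alpha$ then a short step from $\bar\bx$ toward $\bx$ would remain in $\Lev_\omega(\alpha)$ while strictly decreasing the distance to $\bx$, contradicting the definition of the projection. Next, considering the convex one-dimensional function $t\mapsto\omega(\bar\bx+t(\bx-\bar\bx))$, the projection property forces $\omega(\bar\bx+t(\bx-\bar\bx))>\alpha=\omega(\bar\bx)$ for every $t\in(0,1]$, so this function is nondecreasing at $t=0$; equivalently its right derivative, which equals the directional derivative $\max_{\bv\in\partial\omega(\bar\bx)}\langle\bv,\bx-\bar\bx\rangle$, is nonnegative. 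Therefore there exists $\bv\in\partial\omega(\bar\bx)$ with $\langle\bv,\bx-\bar\bx\rangle\ge0$ (equivalently, this is the normal-cone/KKT condition $\bx-\bar\bx\in\mu\,\partial\omega(\bar\bx)$ for some $\mu\ge0$).

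Finally I would plug this into the strong convexity inequality at $\bar\bx$,
\[
\omega(\bx)\ \ge\ \omega(\bar\bx)+\langle\bv,\bx-\bar\bx\rangle+\tfrac{\sigma}{2}\|\bx-\bar\bx\|^2\ \ge\ \alpha+\tfrac{\sigma}{2}\|\bx-\bar\bx\|^2,
\]
using $\omega(\bar\bx)=\alpha$ and $\langle\bv,\bx-\bar\bx\rangle\ge0$. Rearranging gives $\|\bx-\bar\bx\|^2\le\frac{2}{\sigma}(\omega(\bx)-\alpha)=\frac{2}{\sigma}[\omega(\bx)-\alpha]_+$, which is exactly the $\kappa=2$, $\gamma=2/\sigma$ error bound since $\|\bx-\bar\bx\|=\dist(\bx,\Lev_\omega(\alpha))$.

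The main obstacle is the middle step: rigorously producing the outward subgradient $\bv$ with $\langle\bv,\bx-\bar\bx\rangle\ge0$. This is where convexity of the level set and the optimality of the projection must be used carefully, either through the directional-derivative characterization of the subdifferential or through a normal-cone/KKT argument for the projection problem, the latter requiring a mild constraint qualification (available here because when $\alpha>\ubar{\omega}$ Slater's condition holds, and when $\alpha=\ubar{\omega}$ the level set degenerates to the minimizer and $\bv=\mathbf{0}$ works directly). Once this inner-product sign is secured, the remainder is a one-line substitution into the strong convexity inequality.
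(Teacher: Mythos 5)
Your proof is correct and follows essentially the same route as the paper's: project onto the level set, use the max-formula for the directional derivative along $\bx-\bar{\bx}$ together with the fact that $\omega>\alpha$ on the segment $(\bar{\bx},\bx]$ to obtain a subgradient $\bv$ at $\bar{\bx}$ with $\langle\bv,\bx-\bar{\bx}\rangle\geq 0$, and then invoke the first-order characterization of strong convexity. Your explicit verification that $\omega(\bar{\bx})=\alpha$ is a small added piece of rigor that the paper leaves implicit, but the argument is otherwise identical.
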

    \begin{proof}
        Set $\alpha\geq\ubar{\omega}$. If $\bx\in \Lev_\omega(\alpha)$ the statement trivially holds. Otherwise, let $\bx$ satisfy $\omega(\bx)>\alpha$. Denote $\bx^\alpha=\proj_{\Lev_{\omega}(\alpha)}(\bx)$ and let  $\bu=\argmax_{\bv\in\partial\omega(\bx^\alpha)}\langle\bv,\bx-\bx^{\alpha}\rangle$. By 
         \cite[Theorem 3.26]{FO_Book}, the directional derivitive in direction $\bx-\bx^\alpha$ satisfies $\omega'(\bx^\alpha,\bx-\bx^\alpha)=\langle\bu,\bx-\bx^\alpha\rangle$, and since $\omega(\tilde{\bx})>\alpha$ for all $\tilde{\bx}\in(\bx^\alpha,\bx]$,  $\omega'(\bx^\alpha,\bx-\bx^\alpha)\geq 0$. Therefore,  using the first order characterization of strong convexity \cite[Theorem 5.24]{FO_Book} we obtain \begin{align*}
        \omega(\bx)&\geq\omega(\bx^{\alpha})+\langle\bu,\bx-\bx^{\alpha}\rangle+\frac{\sigma}{2}\norm{\bx-\bx^\alpha}^2\geq\alpha+\frac{\sigma}{2}\dist^2\left(\bx,\Lev_{\omega}(\alpha)\right),
        \end{align*}
        which is equivalent to the desired result.
    \end{proof}}
{Moreover, Definition~\ref{def:error_bound} is more general than strong convexity, and non strongly convex functions may also have such a global error bound.} %Specifically, using the following result from \cite{Error_bounds} one can verify that a function has such an $1$-power error bound, as well as calculate the appropriate $\gamma$.}
%Even though it may seem difficult to verify the existence of such an error bound and to obtain a bound on $\gamma$, we will present a result from \cite{Error_bounds}
% \cite{Global_Error_Bounds} 
%that identifies conditions on function $\omega$
% and convex set $C$ 
%for which the error bound exists and $\gamma$ can be calculated. 
{Specifically, the following result, which is a direct consequence of \cite[Theorem 1]{Error_bounds} for function $\Gamma=\omega(\bx)-\alpha$, provides an easy method for verifying a $1$-power global error bound for a given function $\omega$, and finding the associated constant $\gamma$.}
    \begin{proposition}\label{proposition:error_bound_subgradient}
        Let $\omega$ be a closed, proper, and convex function, and let
    \begin{equation*}\label{error_bound_subgrad}
        \tau=\inf_{\bx,\mathbf{v}}\{\|\mathbf{v}\|:\mathbf{v} \in \partial \omega(\bx), \omega(\bx)>\ubar{\omega}\}
    \end{equation*}
    If $\tau>0$, then $\omega$ has a global error bound with {$\kappa=1$} and $\gamma=\tau^{-1}$, \ie % satsifies the following for all $\bx\in\Real^n$}
    $$\dist(\bx, \Lev_{\omega}(\alpha))\leq \gamma [\omega(\bx)-\alpha]_+, \;\forall \bx\in \Real^n,\;\alpha\geq \ubar{\omega}.$$
    \end{proposition}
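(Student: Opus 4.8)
The plan is to combine the optimality condition for the projection onto a level set with the elementary subgradient inequality, so that the single constant $\tau$ controls every level simultaneously. Fix $\alpha\ge\ubar{\omega}$ and $\bx\in\Real^n$. If $\omega(\bx)\le\alpha$ then $\bx\in\Lev_{\omega}(\alpha)$ and both sides of the claimed inequality vanish, so I may assume $\omega(\bx)>\alpha$. I would set $\bx^\alpha=\proj_{\Lev_{\omega}(\alpha)}(\bx)$, which is well defined since $\Lev_{\omega}(\alpha)$ is a nonempty closed convex set, and first argue that the projection lands on the boundary, i.e.\ $\omega(\bx^\alpha)=\alpha$: were $\omega(\bx^\alpha)<\alpha$, continuity of the convex function $\omega$ near $\bx^\alpha$ would allow a small move from $\bx^\alpha$ toward $\bx$ that stays in $\Lev_{\omega}(\alpha)$ yet strictly decreases the distance to $\bx$, contradicting that $\bx^\alpha$ is the closest point.

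The heart of the argument is to extract a \emph{single} subgradient at $\bx^\alpha$ aligned with the direction $\bx-\bx^\alpha$. By the projection optimality condition, $\bx-\bx^\alpha$ lies in the normal cone $N_{\Lev_{\omega}(\alpha)}(\bx^\alpha)$. When $\alpha>\ubar{\omega}$, Slater's condition holds for the convex constraint $\omega(\cdot)\le\alpha$ (some point lies strictly below level $\alpha$), so this normal cone coincides with the cone generated by the subdifferential, $\mathbb{R}_+\,\partial\omega(\bx^\alpha)$. Hence there are $\lambda\ge 0$ and $\bar\bv\in\partial\omega(\bx^\alpha)$ with $\bx-\bx^\alpha=\lambda\bar\bv$. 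Since $\omega(\bx^\alpha)=\alpha>\ubar{\omega}$, the defining infimum gives $\norm{\bar\bv}\ge\tau$. Substituting $\bar\bv$ into the subgradient inequality $\omega(\bx)\ge\omega(\bx^\alpha)+\langle\bar\bv,\bx-\bx^\alpha\rangle$ and using $\norm{\bx-\bx^\alpha}=\lambda\norm{\bar\bv}$ yields
\[
\omega(\bx)-\alpha\;\ge\;\langle\bar\bv,\lambda\bar\bv\rangle\;=\;\lambda\norm{\bar\bv}^2\;=\;\norm{\bx-\bx^\alpha}\,\norm{\bar\bv}\;\ge\;\tau\,\dist\!\bigl(\bx,\Lev_{\omega}(\alpha)\bigr),
\]
which, after dividing by $\tau$, is exactly the asserted bound with $\gamma=\tau^{-1}$.

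The only remaining case is the degenerate level $\alpha=\ubar{\omega}$, where Slater's condition fails and the normal-cone formula is unavailable. I would recover it by a limiting argument: apply the inequality just proved at each intermediate level $\alpha'\in(\ubar{\omega},\omega(\bx))$ and let $\alpha'\downarrow\ubar{\omega}$. Since $\omega$ is closed, the sublevel sets decrease to $\Lev_{\omega}(\ubar{\omega})=\bigcap_{\alpha'>\ubar{\omega}}\Lev_{\omega}(\alpha')$, so $\dist(\bx,\Lev_{\omega}(\alpha'))$ increases to $\dist(\bx,\Lev_{\omega}(\ubar{\omega}))$; passing to the limit in $\dist(\bx,\Lev_{\omega}(\alpha'))\le\tau^{-1}(\omega(\bx)-\alpha')$ gives the bound at $\alpha=\ubar{\omega}$ whenever that level set is nonempty.

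I expect the main obstacle to be the normal-cone characterization $N_{\Lev_{\omega}(\alpha)}(\bx^\alpha)=\mathbb{R}_+\,\partial\omega(\bx^\alpha)$: this is precisely where Slater's condition and the hypothesis $\tau>0$ (which forces $0\notin\partial\omega(\bx^\alpha)$, so $\bx^\alpha$ is not a minimizer) are used, and correspondingly where the limiting treatment of $\alpha=\ubar{\omega}$ becomes necessary; everything else is the plain subgradient inequality. As the statement indicates, this is the content of \cite[Theorem 1]{Error_bounds} specialized to $\Gamma=\omega-\alpha$, whose zero-sublevel set is $\Lev_{\omega}(\alpha)$ and whose subdifferential is $\partial\omega$; because $\alpha\ge\ubar{\omega}$ guarantees $\{\bx:\omega(\bx)>\alpha\}\subseteq\{\bx:\omega(\bx)>\ubar{\omega}\}$, the single constant $\tau$ furnishes a valid lower bound on subgradient norms at every level at once.
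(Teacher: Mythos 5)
Your argument is correct, but it is genuinely a different route from the paper's: the paper offers no proof at all, deferring entirely to \cite[Theorem 1]{Error_bounds} applied to $\Gamma=\omega-\alpha$, whereas you supply a self-contained derivation. Your proof is the standard one and it works: project, show $\omega(\bx^\alpha)=\alpha$, use $\bx-\bx^\alpha\in N_{\Lev_\omega(\alpha)}(\bx^\alpha)=\mathbb{R}_+\partial\omega(\bx^\alpha)$ under Slater's condition to align a subgradient with the projection direction, and feed it into the subgradient inequality; the identity $\lambda\norm{\bar{\bv}}^2=\norm{\bx-\bx^\alpha}\,\norm{\bar{\bv}}\geq\tau\,\dist(\bx,\Lev_\omega(\alpha))$ is exactly right, and you correctly isolate both places where $\alpha>\ubar{\omega}$ is needed (Slater, and $\omega(\bx^\alpha)=\alpha>\ubar{\omega}$ so that $\tau$ applies to $\bar{\bv}$). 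It is worth noting the contrast with the paper's proof of the \emph{preceding} proposition (strong convexity, $\kappa=2$): there the authors also project onto the level set but take $\bu=\argmax_{\bv\in\partial\omega(\bx^\alpha)}\langle\bv,\bx-\bx^\alpha\rangle$ and only use nonnegativity of the directional derivative, which suffices because strong convexity supplies the quadratic growth; in your setting the growth must come from the subgradient itself, which is why the sharper normal-cone alignment is indispensable. Two minor caveats: (i) as stated the proposition allows extended-real-valued $\omega$, for which the normal cone of the level set is $\mathbb{R}_+\partial\omega(\bx^\alpha)+N_{\dom(\omega)}(\bx^\alpha)$ rather than $\mathbb{R}_+\partial\omega(\bx^\alpha)$, so an extra (harmless but nontrivial) term must be absorbed; this disappears in the paper's actual setting where $\omega:\Real^n\rightarrow\Real$. (ii) Your limiting treatment of $\alpha=\ubar{\omega}$ needs $\Lev_\omega(\ubar{\omega})\neq\emptyset$ and boundedness of the projections to pass to the limit of the nested level sets; both hold under Assumption~\ref{assumption:omega}, and your hedge ``whenever that level set is nonempty'' is the right one.
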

    
    % Since the iteration complexity of our proposed method depends on the value of $\gamma$, the price we pay for the ability to easily compute $\gamma$ is the possible increase of the number of iterations needed by the method.
    
    In Table~\ref{tbl:error_bound}, we show several examples of functions $\omega$ that have a global error bound and their associated {$\kappa$ and }$\gamma$.
    As can be seen by these examples, the existence and the derivation of $\gamma$ can be computed easily in many important cases, including the interesting cases where $\omega$ is an $\ell_p$ norm.
    \begin{table}[ht]
    \begin{minipage}{\textwidth}
    \renewcommand{\arraystretch}{1.5}
    \centering
    \footnotesize
    \begin{tabular}{lllll}
    \hline
    $\omega$ & Parameters & $\Lev_{\omega}(\alpha)$ & {$\kappa$} & $\gamma$\\
    \hline
    {$\eta(\bx)+\frac{\sigma}{2}\norm{\bx}^2$} & {$\eta$ convex, $\sigma> 0$} & {Strongly-convex} & {2} & {$\frac{2}{\sigma}$}\\
    $\max\limits_{j\in[m]}\{\langle \ba_j,\bx\rangle- b_j\}$ & $\ba_j\in \Real^n,\; j\in [m],\;\bb\in \Real^m$ & Polytope & {1} & $\max_{j\in[m]}\{\norm{\ba_j}^{-1}\}$\\
    $\norm{\bx-\bx_0}_{\bQ}\vphantom{\max\limits_{j\in[m]}a}$ & $\bQ\in \mathbb{S}^n,\;\bQ\succ 0, \;\bx_0\in \Real^n$ & Ellipsoid & {1} & $\frac{1}{\sqrt{\lambda_{\min}(\bQ)}}$\\
    \multirow{2}{*}{$\norm{\bx-\bx_0}_p\phantom{\max\limits_{j\in[m]}}$} & \multirow{2}{*}{$\bx_0\in \Real^n,\; \begin{array}{c} 1\leq p\leq 2\\p>2\end{array}$}& \multirow{2}{*}{$\ell_p$-ball} & {\multirow{2}{*}{1}} & 
    \multirow{2}{*}{ $\begin{array}{l} 1\\n^{\frac{1}{2}-\frac{1}{p}} \end{array} $}\\
     &&&&\\
      %$\norm{\bx-\bx_0}_\infty-r$ & $\bx_0\in \Real^n,\; r\geq 0$ & $\ell_\infty$-ball & $\sqrt{n}$\\
    \multirow{2}{*}{$\norm{x}_1+\rho\norm{x}_2^2 \vphantom{\max\limits_{j\in[m]}}$} & \multirow{2}{*}{$\rho>0$} & \multirow{2}{*}{Elastic net 'ball'} &{1}& $1$\\
    &&& {2} &{$\frac{1}{\rho}$}\\
    %$\norm{\bx}-x_{n+1}$ ($\bx\in \Real^n$) & - & Lorenz cone & $1$\\
    \hline
    \end{tabular}
    \end{minipage}
    \caption{{Error bound constants}}\label{tbl:error_bound}
    \end{table}
\section{The method and its implementation}\label{section:the_method}
\subsection{Approach and general scheme}\label{subsec:general_scheme}
Our approach is based on a formulation of problem \eqref{prob:MNP}, which is related to the formulation \eqref{prob:MNP'}. The main idea behind the method is that in many important bilevel problems, projecting onto the level sets of $\omega$ is much easier than projecting onto $X^*$, the implicitly defined feasible set. Take for example the $l_1$ norm, even though it is not differentiable and not strongly convex, its level sets are $l_1$ balls, which are amenable to linear oracles and projections. This insight encouraged us to explore a different formulation of \eqref{prob:MNP} for which we will later derive our algorithm. A key element of this formulation is the function  $H:\Real^{2n+1}\rightarrow(-\infty,+\infty]$, \begin{equation*}
    H(\bx,\bz,\alpha)=\varphi(\bx)+\norm{\bz-\bx}^2+\delta_{\epi(\omega)}(\bz,\alpha)\equiv\begin{cases} \varphi(\bx)+\norm{\bx-\bz}^2, & \omega(\bz)\leq \alpha\\ \infty, & \omega(\bz)>\alpha\end{cases}
\end{equation*}
Note that $H(\bx,\bz,\alpha)$ is an extended real-valued convex function. For our analysis we will also use the extended real-valued one dimensional function $h:\Real\rightarrow(-\infty,+\infty]$ given by
 
\begin{equation}\label{def:h}\tag{$\text{P}_\alpha$}h(\alpha)=\min_{\bx\in\Real^n,\bz\in\Real^n} H(\bx,\bz,\alpha)=\min_{\bx\in\Real^n}\left\{\varphi(\bx)+\dist(\bx,\Lev_\omega(\alpha))^2\right\}.
\end{equation}
For the following results and definitions we also use the notation $\ubar{\omega}=\inf_\bx\{\omega(\bx)\}$ (which may be equal to $-\infty$).
The following result presents some key properties of $h(\cdot)$, which we will use in our analysis, and are proven in Appendix~\ref{appendix:h_properties}
\begin{lemma}\label{lemma:h_decreasing}
Given the definition of $h(\cdot)$ in \eqref{def:h} the following holds:
\begin{enumerate}[label=(\roman*)]
\item $h(\cdot)$ is a one dimensional convex function
\item $h(\cdot)$ is nonincreasing, and is decreasing for any $\alpha\in(\ubar{\omega},\omega^*)$.
\end{enumerate}
\end{lemma}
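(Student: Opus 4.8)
The plan is to handle the two assertions separately: convexity will follow from the joint convexity of $H$ together with partial minimization, the nonincreasing property from the nestedness of the level sets, and the strict-decrease claim will require the most care, being reduced to an endpoint comparison via convexity.

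For part (i), I would first note that $H(\bx,\bz,\alpha)$ is jointly convex in $(\bx,\bz,\alpha)$: the term $\varphi(\bx)$ is convex in $\bx$, the quadratic $\norm{\bz-\bx}^2$ is convex in $(\bx,\bz)$, and $\delta_{\epi(\omega)}(\bz,\alpha)$ is convex because $\epi(\omega)$ is a convex set (as $\omega$ is convex). Since $h(\alpha)=\inf_{\bx,\bz}H(\bx,\bz,\alpha)$ is the partial minimization of a jointly convex function over $(\bx,\bz)$, it is convex in $\alpha$ by the standard partial-minimization result for jointly convex functions \cite{FO_Book}. This settles (i).

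For the nonincreasing property in (ii), I would use that $\alpha_1\le\alpha_2$ implies $\Lev_\omega(\alpha_1)\subseteq\Lev_\omega(\alpha_2)$, hence $\dist(\bx,\Lev_\omega(\alpha_2))\le\dist(\bx,\Lev_\omega(\alpha_1))$ for every fixed $\bx$; taking the infimum over $\bx$ of $\varphi(\bx)+\dist(\bx,\Lev_\omega(\cdot))^2$ yields $h(\alpha_2)\le h(\alpha_1)$. The strict-decrease statement is the crux, and my strategy is to pin down the endpoint values and then invoke convexity. At $\alpha=\omega^*$, a bilevel-optimal point $\tilde{\bx}\in X^*$ with $\omega(\tilde{\bx})=\omega^*$ lies in $\Lev_\omega(\omega^*)$, so $h(\omega^*)=\varphi^*$. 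I would then show $h(\alpha)>\varphi^*$ for every $\alpha<\omega^*$. The key claim is that any minimizer $\bx_\alpha$ of $\varphi(\cdot)+\dist(\cdot,\Lev_\omega(\alpha))^2$ lies strictly outside $\Lev_\omega(\alpha)$: if instead $\bx_\alpha\in\Lev_\omega(\alpha)$, then since the squared-distance term is differentiable with gradient $2(\bx_\alpha-\proj_{\Lev_\omega(\alpha)}(\bx_\alpha))=\bzero$ on the level set, the optimality condition $\bzero\in\partial(\varphi+\dist(\cdot,\Lev_\omega(\alpha))^2)(\bx_\alpha)$ collapses to $\bzero\in\partial\varphi(\bx_\alpha)$, forcing $\bx_\alpha\in X^*$ and hence $\omega(\bx_\alpha)\ge\omega^*>\alpha$, contradicting $\bx_\alpha\in\Lev_\omega(\alpha)$. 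Therefore $\dist(\bx_\alpha,\Lev_\omega(\alpha))>0$ and $h(\alpha)=\varphi(\bx_\alpha)+\dist(\bx_\alpha,\Lev_\omega(\alpha))^2>\varphi^*$. Finally, were $h(\alpha_1)=h(\alpha_2)$ for some $\ubar\omega<\alpha_1<\alpha_2<\omega^*$, convexity together with the nonincreasing property would force $h$ to be constant on $[\alpha_1,\infty)$, giving $h(\alpha_1)=h(\omega^*)=\varphi^*$ and contradicting $h(\alpha_1)>\varphi^*$; hence $h$ is strictly decreasing on $(\ubar\omega,\omega^*)$.

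The main obstacle I anticipate is the step $h(\alpha)>\varphi^*$, which tacitly uses that the infimum defining $h(\alpha)$ is attained, so that the subdifferential argument applies to an actual minimizer $\bx_\alpha$ rather than to a merely infimizing sequence that could escape to infinity. I would secure attainment separately, either by citing the existence result for the inner minimization or by establishing coercivity of $\varphi(\cdot)+\dist(\cdot,\Lev_\omega(\alpha))^2$, and only then run the optimality-condition argument; verifying this attainment is the delicate point of the proof.
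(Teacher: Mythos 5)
Your proposal follows essentially the same route as the paper's proof: convexity of $h$ via partial minimization of the jointly convex $H$, the nonincreasing property via nestedness of the level sets, and strict decrease on $(\ubar{\omega},\omega^*)$ by combining convexity with the endpoint facts $h(\omega^*)=\varphi^*$ and $h(\alpha)>\varphi^*$ for $\alpha<\omega^*$ (the paper writes the convex combination $\alpha_2=\lambda\alpha_1+(1-\lambda)\omega^*$ directly rather than arguing by contradiction through constancy, but these are the same idea). The one place you genuinely go beyond the paper is the claim $h(\alpha)>\varphi^*$ for $\alpha<\omega^*$: the paper simply asserts this ``by the definition of $\omega^*$,'' whereas you supply an optimality-condition argument (the gradient of $\dist(\cdot,\Lev_\omega(\alpha))^2$ vanishes on the level set, so an interior minimizer would satisfy $\bzero\in\partial\varphi(\bx_\alpha)$ and lie in $X^*$, contradicting $\omega(\bx_\alpha)\le\alpha<\omega^*$). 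That argument is correct, and you are right to flag attainment of the minimum in \eqref{def:h} as the delicate point; under Assumption~\ref{assumption:omega}\ref{ass:omega_compact} the level set is compact, so $\varphi+\dist(\cdot,\Lev_\omega(\alpha))^2$ is coercive and closed and the minimum is attained, which closes the gap. An alternative that avoids attainment altogether is to take any minimizing sequence, note that boundedness of $\Lev_\omega(\alpha)$ forces it to stay bounded, and pass to a cluster point in the closed set $\Lev_\omega(\alpha)$ using lower semicontinuity of $\varphi$.
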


As in the \eqref{prob:MNP'} formulation, we denote $\varphi^*$ to be the optimal value of \eqref{prob:P} and $X^*$ to be its optimal solution set. Since $\varphi(\bx)=\varphi^*$ for all $\bx\in X^*$, the goal of our algorithm is to find the solution to the following optimization problem, which has the same optimal value as the bilevel problem \eqref{prob:MNP}
\begin{equation}\tag{\text{BLP}$_\alpha$}\label{MNP_alpha}
    \min_{\bx,\bz,\alpha}\left\{\alpha: H(\bx,\bz,\alpha)\leq \varphi^*\right\}.
\end{equation}

The general scheme of the algorithm is based on two main operations:\begin{enumerate}[label=(\roman*)]
    \item  Approximately optimizing problem~\eqref{def:h} for a given $\alpha$ level set of the \textit{outer} function  $\omega$.
    \item Expanding the the level set by increasing $\alpha$ while maintaining $\alpha\leq \omega^*$, where $\omega^*$ is the optimal value of \eqref{prob:MNP}.
\end{enumerate}
We will first show the general scheme of our method, \textit{ITerative Approximation and Level-set EXpansion} (ITALEX), and then, under some mild assumptions, provide concrete implementations of the operators used in the algorithm. 
We now define two oracles that lie at the core of our algorithm.
\begin{definition}\label{Def:APP_Oracle} The operator  $\mathcal{O}^{\omega,\varphi}(\bx,{\bz},\alpha,\bar{\varphi},\varepsilon)$ is called an \emph{Approximation Oracle} if for any $\varepsilon>0,\bar{\varphi}\geq\varphi^*$, $\alpha\geq \ubar{\omega}$, { $\bx\in\dom(\varphi),\bz\in\Lev_{\omega}(\alpha)$} it returns a triple $(\rho,\mathbf{y}_1,\mathbf{y}_2)\in \Real_+\times \dom(g)\times \Lev_{\omega}(\alpha)$ {(the value of which may depend on $\bx$ and $\bz$)} such that
\begin{align*}
    \rho \in \left[\frac{\varepsilon}{2},h(\alpha)-\bar{\varphi}\right] \quad\Or\quad
    \varphi(\by_1)+\norm{\by_1-\by_2}^2\leq \bar{\varphi}+\varepsilon,\;\rho=0.
\end{align*}
\end{definition}
From the definition of the approximation oracle, it is clear that a strictly positive $\rho$ can be returned only if $h(\alpha)-\bar{\varphi}\geq \varepsilon/2$, and $\rho=0$ can only be returned if there exists a $\by_1\in \dom(g)$ and $\by_2\in \Lev_{\omega}(\alpha)$ such that $h(\alpha)\leq \varphi(\by_1)+\norm{\by_1-\by_2}^2\leq \bar{\varphi}+\varepsilon$. Thus, in the case where $\varepsilon/2 \leq h(\alpha)-\bar{\varphi}\leq \varepsilon$ both outputs are possible. {Note that the inclusion of $\bx$ and $\bz$ in the definition of the oracle allows us to use these inputs as a starting point for the algorithm to find $\by$ and $\rho$, a fact that would prove crucial for obtaining the convergence rates in Section~\ref{subection:Exp}.}
\begin{definition}\label{Def:EXP_Oracle} 

The operator $\mathcal{E}^{\omega,\varphi}(\alpha,\bar{\varphi},\rho)$ is called an \emph{Expansion Oracle} if there exists a non-decreasing continuous function $\Delta:\Real_{++}\rightarrow\Real_{++}$ {such that for any $\bar{\varphi}\geq \varphi^*$ %, and $\alpha\leq \bar{\omega}$,
satisfies}\footnote{{If $\ubar{\omega}=-\infty$ then the condition should hold true for any $\Delta(\rho)\in(0,\infty)$}} \begin{equation}\label{eq:Delta_demand}
    h(\omega^*-\Delta(\rho))\leq \bar{\varphi}+\rho,\quad \forall \rho:\Delta(\rho)\in (0,\omega^*-\ubar{\omega}],
\end{equation}
and the oracle returns $\beta\in\Real$ satisfying \begin{equation}\label{eq:beta_cond}
    \beta\in \begin{cases}[\alpha+\Delta(\rho),\omega^*], &
    \alpha\leq\omega^*,\; 0<\rho\leq h(\alpha)-\bar{\varphi},\\
    \Real, &\text{otherwise}.
    \end{cases}
\end{equation}
\end{definition}
Note that the expansion oracle maintains a level set defined by $\beta$ that is bounded from above by the optimal value of \eqref{prob:MNP}.

\begin{figure}[b!]
	\centering
	\begin{minipage}{0.9\textwidth}
		\begin{algorithm}[H]
			\caption {ITALEX { - Fixed tolerance (ITALEX-FT)}}\label{Alg:ITALEX_Generalized}
			\SetAlgoLined
			{\bf Input:} $\varepsilon>0$, $\bar{\varphi}\geq \varphi^*$%\in[\varphi^*, \varphi^*+\frac{\varepsilon}{2}]$
			, approximation oracle $\mathcal{O}^{\omega,\varphi}$,
			expansion oracle $\mathcal{E}^{\omega,\varphi}$,\\ {$\qquad\quad\tilde{\alpha}_0\leq \bar{\omega}, \tilde{\bx}^0\in\dom(\varphi),\tilde{\bz}^0\in\Lev_{\omega}(\tilde{\alpha}_0)$.}\\
			%\(\varphi_0=\varphi(\bx^0)\)}\\
		\For{$k=1,2,...$}
		{
			{$(\rho_k,\tilde{\bx}^{k},\tilde{\bz}^k)=\mathcal{O}^{\omega,\varphi}(\tilde{\bx}^{k-1},\tilde{\bz}^{k-1},\tilde{\alpha}_{k-1},\bar{\varphi},\frac{\varepsilon}{2})$}\\
			\eIf{${\varphi(\tilde{\bx}^{k})+\norm{\tilde{\bx}^k-\tilde{\bz}^k}^2}\leq\bar{\varphi}+\frac{\varepsilon}{2}$}{
				\textbf{return} {$\tilde{\alpha}_{k-1}$ and $(\tilde{\bx}^{k},\tilde{\bz}^k)$}}{
				{$\tilde\alpha_{k}=\mathcal{E}^{\omega,\varphi}(\tilde\alpha_{k-1},\bar{\varphi},\rho_k)$}\\
			}
		}
	\end{algorithm}
\end{minipage}
%\end{figure}
%\begin{figure}[b!]
%\centering
\begin{minipage}{0.9\textwidth}
	\begin{algorithm}[H]
		\SetAlgoLined
		{Input: $\epsilon_1$, { $\alpha_{0}\leq \omega^*$, $\bx^{0}\in\dom(g)$, $\bz^0\in \Lev_{\omega}(\alpha_{0})$,$\bu^0\in \dom(g)$}\\
			\For {$r=1,2,\ldots$}{
				\textbf{Compute} {$\bu^r=\mathcal{C}(\bu^{r-1},\frac{\epsilon_r}{2})$,  $\bar{\varphi}^r=\varphi(\bu^r)$}.\\
				%Compute $m_f^r$ and $\ell_g^r$ according to \eqref{eq:par_compute}.\\
				\uIf{$\varphi(\bx^{r-1})+\norm{\bx^{r-1}+\bz^{r-1}}^2 > \bar{\varphi}^r+\frac{\epsilon_r}{2}$}
				{$(\alpha_{r}, \bx^{r},\bz^r)$ $\leftarrow$ ITALEX-FT($\epsilon_r$, $\bar{\varphi}^r$, $\mathcal{O}^{\omega,\varphi}$,
					$\mathcal{E}^{\omega,\varphi}$, $\alpha_{r-1}$, $\bx^{r-1}$, $\bz^{r-1})$\\}
				%and obtain $\alpha_{r}$, $\bx^{r}$ and $\bz^r$}
			\Else{$(\bx^{r},\bz^r)=(\bx^{r-1},\bz^{r-1})$ and $\alpha_{r}=\alpha_{r-1}$}
			Update $\epsilon_{r+1}=\frac{\epsilon_r}{2}$
	}}
	\caption{ITALEX {- Changing tolerance (ITALEX-CT)}}\label{Alg:ITALEX_epsilon_change}
\end{algorithm}
\end{minipage}
\end{figure}

{%The input of both oracles includes  an approximation of the optimal value of the inner problem $\bar{\varphi}\geq \varphi^*$.
We begin by presenting Algorithm~\ref{Alg:ITALEX_Generalized}, the ITALEX scheme for a Fixed Tolerance (ITALEX-FT). For a given $\varepsilon$ and $\bar\varphi>\varphi^*$, ITALEX-FT produces a solution $\bx$ such that  $\varphi(\bx)\leq \bar{\varphi}+\frac{\varepsilon}{2}$ and $\dist(\bx,\Lev_{\omega}(\omega^*))\leq \sqrt{\varepsilon}$. 
We then show that for a given decreasing sequence  $\epsilon^r\rightarrow 0$, if  we can generate  $\bar\varphi^r\leq \varphi^*+\frac{\epsilon^r}{2}$, then nesting Algorithm~\ref{Alg:ITALEX_Generalized} in a loop over $r\in[R]$ results in the ITALEX with Changing Tolerance (ITALEX-CT) scheme, presented in Algorithm~\ref{Alg:ITALEX_epsilon_change}. For any $\varepsilon>0$, ITALEX-CT guarantees that there exists an $R$ such that
\begin{equation}\label{eq:convergence_ITALEX-CT}\varphi(\bx^R)-\varphi^*\leq \varepsilon\quad \text{and}\quad \dist(\bx^R,\Lev_{\omega}(\omega^*))\leq \sqrt{\varepsilon}.\end{equation}
Thus, in order to generate the sequence $\{\bar{\varphi}^r\}_{r\in[R]}$, we assume that we have at our disposal an algorithm $\mathcal{C}$ for solving problem~\eqref{prob:P}, which provided with a starting point $\bu$ and a tolerance $\epsilon$ produces a vector $\bv$ such that $\varphi(\bv)\leq \varphi^*+\epsilon$. Note that algorithm $\mathcal{C}$ can be run in parallel to ITALEX-CT, and since it solves the easier problem~\eqref{prob:P}, it is expected to have a faster convergence rate than ITALEX-CT.}

{The ability of ITALEX-CT to solve problem~\ref{prob:MNP} is summarized in the following result.}
{
\begin{theorem}\label{thm:general_convergence}
Let $\mathcal{O}^{\varphi,\omega}$ be an approximation oracle (as in Definition~\ref{Def:APP_Oracle}), and $\mathcal{E}^{\varphi,\omega}$ be an expansion oracle with a corresponding function $\Delta$ (as in Definition~\ref{Def:EXP_Oracle}). Then,
\begin{enumerate}[label=(\roman*),ref=(\roman*)]
\item\label{thm:ITALEX-FT_convergence} ITALEX-FT (Algorithm~\ref{Alg:ITALEX_Generalized}) is well defined, and terminates after ${\tilde{N}\leq \frac{\omega^*-{\omega(\tilde\bz^0)}}{\Delta(\sfrac{\varepsilon}{4})}}$ iterations with a level set $\tilde{\alpha}_{\tilde{N}-1}$ and a vector $\tilde\bx^{\tilde{N}}$ satisfying: \begin{equation}%\label{eq:bound_on_N}
    %&\tilde{N}\leq \frac{\alpha_{\tilde{N}-1}-{\omega(\tilde\bz^0)}}{\Delta(\frac{\varepsilon}{4})}+1\leq \frac{\omega^*-{\omega(\tilde\bz^0)}}{\Delta(\frac{\varepsilon}{4})}\\%.\end{equation}  
 %\begin{equation}
    \hskip-7pt \tilde{\alpha}_{\tilde{N}-1}\in[\omega^*-\Delta\left(\frac{\varepsilon}{4}\right), \omega^*],\;\dist(\tilde{\bx}^{\tilde{N}},\Lev_\omega(\omega^*))\leq \sqrt{\varepsilon},
    \;%\omega({\bx}^N)\leq\omega^*,\,\,
    \varphi(\tilde{\bx}^{\tilde{N}})\leq \bar{\varphi}+\frac{\varepsilon}{2}.\label{eq:last_iterate}\end{equation}
\item\label{thm:ITALEX-CT_convergence} Let $\{\bx^{r-1}\}_{r\in\mathbb{N}}$ be the iterates produced by ITALEX-CT (Algorithm~\ref{Alg:ITALEX_epsilon_change}). Then, for any $\varepsilon>0$, after $R\leq \lceil\log_2\left(\frac{\epsilon_1}{\varepsilon}\right)\rceil+1$ iterations, we obtain the iterate $\bx^R$ satisfying \eqref{eq:convergence_ITALEX-CT} with a total of $N\leq \frac{\omega^*-{\omega(\bz^0)}}{\Delta(\frac{\varepsilon}{4})}+R$ calls to the approximation oracle. 
\end{enumerate}
\end{theorem}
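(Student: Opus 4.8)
The plan is to establish part~\ref{thm:ITALEX-FT_convergence} first and then bootstrap it to obtain part~\ref{thm:ITALEX-CT_convergence}, since each pass of ITALEX-CT is essentially one run of ITALEX-FT with a refined tolerance. For ITALEX-FT I would begin with well-definedness, arguing by induction on $k$ that every oracle receives admissible arguments. The base case holds because $\tilde{\bz}^0\in\Lev_\omega(\tilde\alpha_0)$ forces $\tilde\alpha_0\ge\omega(\tilde{\bz}^0)\ge\ubar{\omega}$ and $\tilde{\bx}^0\in\dom(\varphi)$. For the inductive step, observe that whenever iteration $k$ does not stop, the stopping test fails, so the approximation oracle cannot have returned its $\rho=0$ alternative; hence $\rho_k\in[\varepsilon/4,\,h(\tilde\alpha_{k-1})-\bar{\varphi}]$, giving both $\rho_k\ge\varepsilon/4>0$ and $h(\tilde\alpha_{k-1})\ge\bar{\varphi}+\rho_k$. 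The decisive point is that the expansion oracle's defining inequality \eqref{eq:Delta_demand}, read through the monotonicity of $h$ (Lemma~\ref{lemma:h_decreasing}), yields $\tilde\alpha_{k-1}\le\omega^*-\Delta(\rho_k)$, so the interval $[\tilde\alpha_{k-1}+\Delta(\rho_k),\omega^*]$ in \eqref{eq:beta_cond} is nonempty and returns a valid $\tilde\alpha_k\in(\tilde\alpha_{k-1},\omega^*]$; nestedness of the level sets then gives $\tilde{\bz}^k\in\Lev_\omega(\tilde\alpha_{k-1})\subseteq\Lev_\omega(\tilde\alpha_k)$ and $\tilde{\bx}^k\in\dom(g)\subseteq\dom(\varphi)$, closing the induction.

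For the iteration count I would use that every nonterminating step advances the level by $\tilde\alpha_k-\tilde\alpha_{k-1}\ge\Delta(\rho_k)\ge\Delta(\varepsilon/4)$, since $\rho_k\ge\varepsilon/4$ and $\Delta$ is non-decreasing. As the levels stay below $\omega^*$, summing the increments bounds the number of expansions by $(\omega^*-\tilde\alpha_0)/\Delta(\varepsilon/4)\le(\omega^*-\omega(\tilde{\bz}^0))/\Delta(\varepsilon/4)$, forcing termination within the stated $\tilde{N}$. At the terminating index the three guarantees in \eqref{eq:last_iterate} then fall out: the bound $\varphi(\tilde{\bx}^{\tilde{N}})\le\bar{\varphi}+\varepsilon/2$ is immediate from the stopping test since $\|\tilde{\bx}^{\tilde{N}}-\tilde{\bz}^{\tilde{N}}\|^2\ge 0$; for feasibility, $\tilde{\bz}^{\tilde{N}}\in\Lev_\omega(\tilde\alpha_{\tilde{N}-1})\subseteq\Lev_\omega(\omega^*)$ gives $\dist(\tilde{\bx}^{\tilde{N}},\Lev_\omega(\omega^*))^2\le\|\tilde{\bx}^{\tilde{N}}-\tilde{\bz}^{\tilde{N}}\|^2\le(\bar{\varphi}+\varepsilon/2)-\varphi(\tilde{\bx}^{\tilde{N}})\le(\bar{\varphi}-\varphi^*)+\varepsilon/2$, which is at most $\varepsilon$ in the regime $\bar{\varphi}\le\varphi^*+\varepsilon/2$ under which the subroutine is used; and the localization $\tilde\alpha_{\tilde{N}-1}\in[\omega^*-\Delta(\varepsilon/4),\omega^*]$ combines $\tilde\alpha_{\tilde{N}-1}\le\omega^*$ from the expansion oracle with the observation that at the stopping level no positive $\rho\ge\varepsilon/4$ is certified, so $h(\tilde\alpha_{\tilde{N}-1})-\bar{\varphi}$ is within $\varepsilon/4$ and \eqref{eq:Delta_demand} places $\tilde\alpha_{\tilde{N}-1}$ above $\omega^*-\Delta(\varepsilon/4)$.

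Part~\ref{thm:ITALEX-CT_convergence} I would then assemble in three steps. First, $\epsilon_{r+1}=\epsilon_r/2$ gives $\epsilon_R=\epsilon_1 2^{-(R-1)}\le\varepsilon$ after $R=\lceil\log_2(\epsilon_1/\varepsilon)\rceil+1$ passes. Second, at pass $R$ the solver $\mathcal{C}$ supplies $\bar{\varphi}^R=\varphi(\bu^R)\in[\varphi^*,\varphi^*+\epsilon_R/2]$, so invoking part~\ref{thm:ITALEX-FT_convergence} (or the carry-over branch) with tolerance $\epsilon_R$ yields $\varphi(\bx^R)\le\bar{\varphi}^R+\epsilon_R/2\le\varphi^*+\epsilon_R\le\varphi^*+\varepsilon$ and, as above, $\dist(\bx^R,\Lev_\omega(\omega^*))^2\le(\bar{\varphi}^R+\epsilon_R/2)-\varphi(\bx^R)\le\epsilon_R\le\varepsilon$, which is exactly \eqref{eq:convergence_ITALEX-CT}. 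Third, for the oracle-call count I would exploit that the level $\alpha_{r-1}$ is carried into the $r$-th call rather than reset: the expansions performed across all passes telescope against a single budget $\omega^*-\omega(\bz^0)$, and since each advances $\alpha$ by at least a $\Delta(\varepsilon/4)$-sized increment, their total is at most $(\omega^*-\omega(\bz^0))/\Delta(\varepsilon/4)$; adding the one terminating oracle call charged to each of the at most $R$ passes gives $N\le(\omega^*-\omega(\bz^0))/\Delta(\varepsilon/4)+R$.

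The step I expect to be the main obstacle is the two-directional use of the coupling between the expansion oracle and the value function $h$: the inequality \eqref{eq:Delta_demand} must be exploited once with $h$ non-increasing to guarantee each expansion interval is nonempty and the progress strictly positive, and once in contrapositive form to pin the terminating level $\tilde\alpha_{\tilde{N}-1}$ just below $\omega^*$; getting the constants to line up---in particular the uniform $\Delta(\varepsilon/4)$ appearing despite the tolerance changing across passes, and the implicit use of the tightest admissible $\Delta$ in the lower bound---is the delicate part. A secondary subtlety is the global, rather than per-pass, accounting of expansions in ITALEX-CT, which relies essentially on the level set being inherited between successive ITALEX-FT invocations.
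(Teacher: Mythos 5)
Your overall route coincides with the paper's: prove termination of ITALEX-FT by lower-bounding each level-set expansion by $\Delta(\varepsilon/4)$ and telescoping against the budget $\omega^*-\omega(\tilde\bz^0)$; read the $\varphi$-bound and the distance bound off the stopping test; and account for ITALEX-CT globally by letting the expansions of all passes telescope against a single budget plus one terminating oracle call per pass (your global accounting is in fact slightly cleaner than the paper's per-pass bound, which needs the lower bound on $\alpha_r$ to telescope). Your remark that the $\sqrt{\varepsilon}$ distance bound requires the regime $\bar{\varphi}\le\varphi^*+\varepsilon/2$ supplied by $\mathcal{C}$ inside ITALEX-CT is correct and is left implicit in the paper, which states \eqref{eq:last_iterate} under the weaker hypothesis $\bar{\varphi}\ge\varphi^*$ only.

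The one step that fails as written is your derivation of the lower bound $\tilde\alpha_{\tilde N-1}\ge\omega^*-\Delta(\varepsilon/4)$. You argue that at termination ``no positive $\rho\ge\varepsilon/4$ is certified, so $h(\tilde\alpha_{\tilde N-1})-\bar{\varphi}$ is within $\varepsilon/4$,'' and then invoke \eqref{eq:Delta_demand} in contrapositive form. Neither half goes through. First, Definition~\ref{Def:APP_Oracle} is a disjunction and the oracle chooses the branch: returning $\rho=0$ only requires the existence of $\by$ with $\hat{\varphi}^{\alpha}(\by)\le\bar{\varphi}+\varepsilon/2$, so at termination you only learn $h(\tilde\alpha_{\tilde N-1})\le\bar{\varphi}+\varepsilon/2$, and the oracle may legitimately stop while $h(\alpha)-\bar{\varphi}\in[\varepsilon/4,\varepsilon/2]$. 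Second, and more fundamentally, \eqref{eq:Delta_demand} is an \emph{upper} bound on $h$ at the single point $\omega^*-\Delta(\rho)$; together with $h$ nonincreasing it cannot convert an upper bound on $h(\tilde\alpha_{\tilde N-1})$ into a lower bound on $\tilde\alpha_{\tilde N-1}$ (the function $h$ could equal $\bar{\varphi}$ on a long interval below the threshold). The paper argues in the forward direction instead: once some $\tilde\alpha_{k-1}$ exceeds $\omega^*-\Delta(\varepsilon/4)$, monotonicity of $h$ (Lemma~\ref{lemma:h_decreasing}) plus \eqref{eq:Delta_demand} force $h(\tilde\alpha_{k-1})-\bar{\varphi}<\varepsilon/4$, so the interval $[\varepsilon/4,\,h(\tilde\alpha_{k-1})-\bar{\varphi}]$ is empty, the oracle must return $\rho=0$, and the algorithm stops; the telescoping bound then guarantees the threshold is crossed within $K-1$ expansions if termination has not already occurred. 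You should replace your contrapositive step by this forward argument. Everything else in the proposal --- the well-definedness induction, the termination count, the distance and $\varphi$ bounds, and the oracle-call count for part (ii) --- matches the paper's proof.
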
}
{
\begin{proof}
\begin{enumerate}[label=(\roman*),ref=(\roman*)]
\item 
Assume that there exists a finite $\tilde{N}$ such that $\mathcal{E}^{\omega,\varphi}$ outputs a point $\tilde\alpha_{\tilde{N}-1}$ satisfying
\begin{equation}\label{eq:alpha_cond}
\omega^*\geq\tilde\alpha_{\tilde{N}-1}> \omega^*-\Delta\left(\frac{\varepsilon}{4}\right).\end{equation}
Then, from Lemma~\ref{lemma:h_decreasing}, and by the definition of $\Delta$, we obtain that
$$h(\tilde\alpha_{\tilde{N}-1})< h\left(\omega^*-\Delta\left(\frac{\varepsilon}{4}\right)\right)\leq \bar{\varphi}+\frac{\varepsilon}{4}.$$
Therefore, the interval $[\frac{\varepsilon}{4},h(\tilde\alpha_{\tilde{N}-1})-\bar{\varphi}]$ is empty, and at iteration $\tilde{N}$ the approximation oracle $\mathcal{O}^{\omega,\varphi}(\tilde{\bx}^{\tilde{N}-1},\tilde\bz^{\tilde{N}-1},\tilde\alpha_{\tilde{N}-1},\bar{\varphi},\frac{\varepsilon}{2})$ cannot output $\rho\in[\frac{\varepsilon}{4},h(\alpha_{\tilde{N}-1})-\bar{\varphi}]$. Thus, $\mathcal{O}^{\omega,\varphi}$ outputs $\rho=0$ and $(\tilde\bx^{\tilde{N}},\tilde\bz^{\tilde{N}})$ such that $$\varphi(\tilde\bx^{\tilde{N}})+\dist(\tilde\bx^{\tilde{N}},\Lev_{\omega}(\omega^*))^2\leq \varphi(\tilde\bx^{\tilde{N}})+\dist(\tilde\bx^{\tilde{N}},\Lev_{\omega}(\alpha^{\tilde{N}-1}))^2\leq \bar{\varphi}+\frac{\varepsilon}{2}.$$ 
Such $\tilde{\bx}^{\tilde{N}}$ must exist since there exist vectors $\by_1$ and $\by_2$ that are an $\frac{\varepsilon}{4}$ optimal solution to \eqref{def:h} with $\tilde\alpha_{\tilde{N}-1}$, and thus
$$\varphi(\by_1)+\dist(\by_1,\Lev_{\omega}(\tilde\alpha^{\tilde{N}-1}))^2\leq  \varphi(\by_1)+\norm{\by_1-\by_2}^2 \leq h(\tilde\alpha_{\tilde{N}-1})+\frac{\varepsilon}{4}\leq \bar{\varphi}+\frac{\varepsilon}{2}.$$
Finally, using the condition on $\tilde\bx^{\tilde{N}}$ and the non-negativity of the distance function we obtain~\eqref{eq:last_iterate}.

We now show that such a finite $\tilde{N}$ exists.
Assume to the contrary, specifically, that for all $k\leq K\equiv \frac{\omega^*-\omega(\tilde\bz^0)}{\Delta\left(\frac{\varepsilon}{4}\right)}+1$ the algorithm is not terminated.
Since the algorithm is not terminated, it follows from the definition of $\mathcal{O}^{\omega,\varphi}$  that $\frac{\varepsilon}{4} \leq \rho_k\leq h(\tilde\alpha_{k-1})-\bar{\varphi}$, and therefore, by definition of $\mathcal{E}^{\omega,\varphi}$ 
\begin{equation}\label{eq:bound_alpha_gap}
\tilde\alpha_k-\tilde\alpha_{k-1}\geq\Delta\left(\frac{\varepsilon}{4}\right)>0.
\end{equation}
Summing over $k=1,2,\ldots,K-1$, we have 
\begin{equation*}\omega^*-\Delta\left(\frac{\varepsilon}{4}\right)-\omega(\tilde\bz^0)\geq \tilde\alpha_{K-1}-\tilde\alpha_0\geq (K-1)\Delta\left(\frac{\varepsilon}{4}\right)=\omega^*-\omega(\tilde\bz^0),\end{equation*}
where the first inequality follows from the definitions of $\tilde\alpha_0$ and our assumption that $K$ does not satisfy \eqref{eq:alpha_cond}, and the equality follows from the definition of $K$. Thus, arriving at a contradiction. We can therefore deduce that $\tilde{N}\leq K-1$. 
\item
 In any outer-iteration  $r$ of Algorithm~\ref{Alg:ITALEX_epsilon_change}, \ref{thm:ITALEX-FT_convergence} %Corollary~\ref{thm:Expansion} 
guarantees that after $N_r$ calls to the approximation oracle 
$$\dist(\bx^r,\Lev_{\omega}(\omega^*))\leq \sqrt{\epsilon_r},\;\varphi(\bx^r)\leq \bar{\varphi}+\frac{\epsilon_r}{2}\leq \varphi^*+\epsilon_r,$$
with
$$N_r\leq \frac{\omega^*-\omega(\bz^{r-1})}{\Delta\left(\frac{\epsilon_r}{4}\right)}\leq \frac{\alpha_{r}+\Delta\left(\frac{\epsilon_r}{4}\right)-\alpha_{r-1}}{\Delta\left(\frac{\epsilon_r}{4}\right)}=\frac{\alpha_{r}-\alpha_{r-1}}{\Delta\left(\frac{\epsilon_r}{4}\right)}+1,$$
where the second inequality follows from the condition on the outputed $\alpha_r$ and $\omega(\bz^{r-1})\leq \alpha_{r-1}$.
Thus, taking $R$ iterations ensures that $\epsilon_R\leq \varepsilon$, satisfying \eqref{eq:convergence_ITALEX-CT}. To calculate a bound on the total number of calls to the approximation oracle $N$ we use
$$N=\sum_{r=1}^R N_r%\leq \sum_{r=1}^R \left(\frac{\alpha_{r}-\alpha_{r-1}}{\Delta\left(\frac{\epsilon_r}{4}\right)}+1\right)
\leq \sum_{r=1}^R \left(\frac{\alpha_{r}-\alpha_{r-1}}{\Delta\left(\frac{\epsilon_R}{4}\right)}+1\right)=\frac{\alpha_R-\alpha_0}{\Delta\left(\frac{\epsilon_R}{4}\right)}+R\leq \frac{\omega^*-\omega(\bz^0)}{\Delta\left(\frac{\epsilon_R}{4}\right)}+R$$
where the first inequality is due to  $\Delta(\cdot)$ being nondecreasing, and $\epsilon_r\geq \epsilon_R$ for all $r\leq R$, and the last inequality is a direct result of the definition of $\alpha_0$ and $\alpha_R\leq \omega^*$ from \ref{thm:ITALEX-FT_convergence}.
\end{enumerate}
\end{proof}}
%{
%Now we can show the next corollary regarding ITALEX with adaptive tolerance.
%\begin{corollary}\label{cor:changing_epsilon_convergence}
%Let $\{\bx^{r-1}\}_{r\in\mathbb{N}}$ be the iterates produced by Algorithm~\ref{Alg:ITALEX_epsilon_change}. Then, 
%$$\lim_{r\rightarrow\infty}\varphi(\bx^r)=\varphi^*\quad \text{and}\quad \lim_{r\rightarrow\infty}\dist(\bx^r,\Lev_{\omega}(\omega^*))=0.$$
%\end{corollary}
%}
%\begin{proof}{
% In any outer-iteration  $r$ of Algorithm~\ref{Alg:ITALEX_epsilon_change}, Theorem~\ref{thm:general_convergence} %Corollary~\ref{thm:Expansion} 
%guarantees that $$\dist(\bx^r,\Lev_{\omega}(\omega^*))\leq \sqrt{\epsilon_r} \text{ and } \varphi(\bx^r)\leq \bar{\varphi}+\epsilon_r.$$
%Thus, taking the limit of $r\rightarrow\infty$ on both sides of these inequalities leads to the desired result.}
%\end{proof}

Although the implementation of the general scheme of ITALEX-FT is currently opaque, in the following sections we will show under some mild assumptions a closed formula for our expansion oracle, and two implementations of the approximation oracle using the GCG and PG methods.
\subsection{Expansion oracle construction}\label{subection:Exp}
In this subsection, we will show how to construct an oracle $\mathcal{E}^{\omega,\varphi}$. For this purpose, we need to find a lower bound for $\omega^*$ using our $\rho\leq h(\alpha)-\varphi^*$, and use it later as an update rule for $\alpha_k$. 

Our first assumption restricts the structure of the outer-function $\omega$.
 \begin{assumption}\label{assumption:omega}
   $\omega(\cdot)$ is a convex norm-like function. That is, $\; \omega:\Real^n\rightarrow \Real$ is convex and satisfies the following properties.
   \begin{enumerate}[label=(\alph*),ref=(\alph*)]
   \item \label{ass:omega_compact} For any $\alpha\in\Real$, The level set $\Lev_{\omega}(\alpha)$ is compact.
   \item \label{ass:omega_error_bound} $\omega$ has a {$\kappa$-power} $\gamma$-global error-bound, \ie \begin{equation*}\exists \gamma>0,{\kappa\in(0,2]}:\forall \bx\in \Real^n, \dist(\bx,\Lev_{\omega}(\alpha)){^\kappa}\leq \gamma[\omega(\bx)-\alpha]_+.\end{equation*}
   \end{enumerate}
 \end{assumption}
Assumption \ref{assumption:omega}\ref{ass:omega_compact} is easily satisfied by any closed coercive function, and is significantly weaker than the strong-convexity assumption made in previous works.
Assumption \ref{assumption:omega}\ref{ass:omega_error_bound} is satisfied for functions such as the ones in Table~\ref{tbl:error_bound}.
Specifically, Assumption~\ref{assumption:omega} is holds when $\omega$ is any norm.
%,as well as the elastic net function.
 
We are now ready to construct the expansion oracle. 
\begin{theorem}\label{thm:Expansion}
Let Assumption \ref{assumption:omega} hold.
Define $\Delta:\Real_{++}\rightarrow\Real_{++}$ as 
$\Delta(\rho)=\frac{{\rho}^{\sfrac{\kappa}{2}}}{\gamma}.$
%\frac{-(m_f+\ell_g)+\sqrt{(m_f+\ell_g)^2+2L_f\rho}}{L_f\gamma} 
%$$ 
Then, $\mathcal{E}^{\omega,\varphi}(\alpha,\bar{\varphi},\rho)=\alpha+\Delta(\rho)$ is an expansion oracle.
\end{theorem}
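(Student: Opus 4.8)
The plan is to verify directly the two requirements of an expansion oracle in Definition~\ref{Def:EXP_Oracle} for the proposed $\Delta(\rho)=\rho^{\kappa/2}/\gamma$ and $\mathcal{E}^{\omega,\varphi}(\alpha,\bar{\varphi},\rho)=\alpha+\Delta(\rho)$. Continuity and monotonicity of $\Delta$ on $\Real_{++}$ are immediate, since $\kappa\in(0,2]$ and $\gamma>0$ make $\rho\mapsto\rho^{\kappa/2}/\gamma$ continuous and strictly increasing. Everything else reduces to two inequalities about the value function $h$ from \eqref{def:h}, and I would obtain both from a single upper bound on $h$, produced by feeding one carefully chosen feasible point into the minimization defining $h$.

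The crux is this estimate. Let $\bx^*$ be a bilevel-optimal point, i.e.\ $\bx^*\in X^*$ (so $\varphi(\bx^*)=\varphi^*$) with $\omega(\bx^*)=\omega^*$; such a point exists because $X^*$ is closed and the level sets of $\omega$ are compact (Assumption~\ref{assumption:omega}\ref{ass:omega_compact}), so $\omega$ attains its infimum over $X^*$. Fix any $\beta'$ with $\ubar{\omega}\leq\beta'\leq\omega^*$. Using $\bx=\bx^*$ as a candidate in \eqref{def:h} and then the global error bound (Assumption~\ref{assumption:omega}\ref{ass:omega_error_bound}) at level $\beta'$ yields
\begin{equation*}
h(\beta')-\varphi^*\leq \dist(\bx^*,\Lev_\omega(\beta'))^2\leq\bigl(\gamma[\omega(\bx^*)-\beta']_+\bigr)^{2/\kappa}=\bigl(\gamma(\omega^*-\beta')\bigr)^{2/\kappa}.
\end{equation*}
This is the one computation driving the whole argument, and the exponent $2/\kappa$ is exactly what the choice $\Delta(\rho)=\rho^{\kappa/2}/\gamma$ is designed to cancel.

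I would then specialize this twice. For \eqref{eq:Delta_demand}, take $\bar{\varphi}\geq\varphi^*$ and $\rho$ with $\Delta(\rho)\in(0,\omega^*-\ubar{\omega}]$, so $\beta'=\omega^*-\Delta(\rho)\in[\ubar{\omega},\omega^*)$; then $\omega^*-\beta'=\Delta(\rho)$ and the key estimate gives $h(\omega^*-\Delta(\rho))-\varphi^*\leq(\gamma\Delta(\rho))^{2/\kappa}=(\rho^{\kappa/2})^{2/\kappa}=\rho$, hence $h(\omega^*-\Delta(\rho))\leq\varphi^*+\rho\leq\bar{\varphi}+\rho$. For \eqref{eq:beta_cond}, the output $\beta=\alpha+\Delta(\rho)$ trivially meets the lower endpoint, so only the upper bound $\beta\leq\omega^*$ in the first case is at stake, i.e.\ when $\alpha\leq\omega^*$ and $0<\rho\leq h(\alpha)-\bar{\varphi}$. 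Applying the key estimate with $\beta'=\alpha$ and using $\bar{\varphi}\geq\varphi^*$,
\begin{equation*}
\rho\leq h(\alpha)-\bar{\varphi}\leq h(\alpha)-\varphi^*\leq\bigl(\gamma(\omega^*-\alpha)\bigr)^{2/\kappa},
\end{equation*}
so $\rho^{\kappa/2}\leq\gamma(\omega^*-\alpha)$, i.e.\ $\Delta(\rho)\leq\omega^*-\alpha$, giving $\beta=\alpha+\Delta(\rho)\leq\omega^*$; in the remaining (``otherwise'') case any real output is admissible.

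I expect the main obstacle to be bookkeeping around domains and attainment rather than the algebra. One must ensure the minimizer $\bx^*$ of $\omega$ over $X^*$ exists, invoke the error bound only at levels $\beta'\in[\ubar{\omega},\omega^*]$ where it is valid, and confirm that the degenerate case $\alpha<\ubar{\omega}$ (empty level set, $h(\alpha)=+\infty$) does not arise — which holds because the algorithm maintains $\bz\in\Lev_\omega(\alpha)$, forcing $\alpha\geq\ubar{\omega}$, and compactness of the level sets of $\omega$ guarantees $\ubar{\omega}>-\infty$. Once these points are settled, the two required inequalities are precisely the single key estimate at $\beta'=\omega^*-\Delta(\rho)$ and at $\beta'=\alpha$, with the cancellation $(\rho^{\kappa/2})^{2/\kappa}=\rho$ doing the essential work.
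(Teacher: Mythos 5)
Your proposal is correct and follows essentially the same route as the paper: both arguments rest on the single estimate $h(\alpha)\leq\varphi^*+\left(\gamma(\omega^*-\alpha)\right)^{\sfrac{2}{\kappa}}$ obtained by plugging a bilevel-optimal $\bx^*$ into \eqref{def:h} and invoking the error bound, then specializing at $\alpha=\omega^*-\Delta(\rho)$ for \eqref{eq:Delta_demand} and at the given $\alpha$ for \eqref{eq:beta_cond}. The extra attention you give to attainment of $\omega^*$ on $X^*$ and to the validity range of the error bound is sound but not a departure from the paper's argument.
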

\begin{proof} 
From the definition of $\Delta(\rho)$, it is easy to see that for any $\rho>0$ it is a nonnegative and increasing function of $\rho$. Moreover, let $\bx^*$ be an optimal solution of problem~\eqref{prob:MNP}. Then, it follows from \eqref{def:h} that for any $\alpha\in [\ubar{\omega},\omega^*]$
\begin{equation}\label{eq:h_connection}
h(\alpha)\leq \varphi(\bx^*)+\dist(\bx^*,\Lev_{\omega}(\alpha))^2 \leq \varphi(\bx^*)+{\left(\gamma(\omega^*-\alpha)\right)^{\sfrac{2}{\kappa}}},\end{equation}
where the last inequality follows from Assumption~\ref{assumption:omega}\ref{ass:omega_error_bound}. 
Thus, for any $\rho$ such that $\Delta(\rho)\in (0,\omega^*-\ubar{\omega}]$ defining $\tilde{\alpha}=\omega^*-\Delta(\rho)$,
and plugging it back into \eqref{eq:h_connection} we immediately obtain property \eqref{eq:Delta_demand}. Moreover, setting $0<\rho\leq h(\alpha)-\bar{\varphi}\leq h(\alpha)-\varphi^*$, combining \eqref{eq:h_connection} with the definition of $\Delta(\cdot)$ implies
$$\Delta(\rho)\leq \frac{({h(\alpha)-\varphi^*})^{\sfrac{\kappa}{2}}}{\gamma}\leq \omega^*-\alpha,$$ which proves inclusion~\eqref{eq:beta_cond}.
\end{proof}

Plugging the definition of the expansion oracle from Theorem~\ref{thm:Expansion} into  Theorem~\ref{thm:general_convergence} results in the following iteration complexity bound on ITALEX-CT.
{
\begin{corollary}\label{cor:ITALEX_convergence_specific}
Let Assumption~\ref{assumption:omega} hold. For any $\varepsilon>0$, after $R\leq \lceil\log_2\left(\frac{\epsilon_1}{\varepsilon}\right)\rceil+1$ iterations of ITALEX-CT, requiring a total number of calls the approximation oracle equal to $N$, where
$$N\leq \frac{2^\kappa\gamma(\omega^*-\omega(\bz^0))}{{\varepsilon}^{\sfrac{\kappa}{2}}}+R,$$
we obtain a solution $\bx^R$ such that \eqref{eq:convergence_ITALEX-CT} is satisfied and additionally
$$%\varphi(\bx^{{R}})\leq\varphi^*+\varepsilon,\quad \dist\left(\bx^{{R}},\Lev_{\omega}(\omega^*)\right)\leq \sqrt{\varepsilon},\quad 
\omega(\bx^R)-\omega^*\leq \ell_{\omega,R_0}\sqrt{\varepsilon}$$ 
% Then, ITALEX, with the expansion oracle presented in Theorem~\ref{thm:Expansion}, runs for at most $N$ iterations where
% $$N\leq \left\lceil \frac{2\gamma\left(\omega^*-\omega(\bz^0)\right)}{\sqrt{\varepsilon}}\right\rceil.$$
where $\bar{\omega}$ be an upper bound on $\omega^*$, and $\ell_{\omega,R_0}$ is the finite Lipschitz constant of $\omega$ over the compact set $$\mathcal{W}^0=\left\{\bx\in\Real^n:\dist(\bx,\Lev_{\omega}(\alpha_0))\leq R_0\equiv (\gamma(\bar{\omega}-\omega(\bz^0)))^{\sfrac{1}{\kappa}}+\sqrt{\varepsilon}
\right\}.$$
\end{corollary}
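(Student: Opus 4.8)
The plan is to separate the statement into two parts. The feasibility guarantee \eqref{eq:convergence_ITALEX-CT} together with the bounds on the number of outer iterations $R$ and on the total number of approximation-oracle calls $N$ follow immediately by feeding the concrete expansion oracle of Theorem~\ref{thm:Expansion} into the generic convergence result of Theorem~\ref{thm:general_convergence}\ref{thm:ITALEX-CT_convergence}. Indeed, under Assumption~\ref{assumption:omega}, Theorem~\ref{thm:Expansion} shows that $\Delta(\rho)=\rho^{\sfrac{\kappa}{2}}/\gamma$ is a valid expansion oracle, so Theorem~\ref{thm:general_convergence}\ref{thm:ITALEX-CT_convergence} applies verbatim, giving \eqref{eq:convergence_ITALEX-CT} and $R\leq\lceil\log_2(\epsilon_1/\varepsilon)\rceil+1$. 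For the oracle-call bound I would substitute this $\Delta$ into $N\leq\frac{\omega^*-\omega(\bz^0)}{\Delta(\sfrac{\varepsilon}{4})}+R$ and evaluate $\Delta(\varepsilon/4)=(\varepsilon/4)^{\sfrac{\kappa}{2}}/\gamma=\varepsilon^{\sfrac{\kappa}{2}}/(2^{\kappa}\gamma)$, which turns the denominator into the claimed $2^{\kappa}\gamma(\omega^*-\omega(\bz^0))/\varepsilon^{\sfrac{\kappa}{2}}$.

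The only genuinely new content is the outer-optimality bound $\omega(\bx^R)-\omega^*\leq\ell_{\omega,R_0}\sqrt{\varepsilon}$. Here the difficulty is that \eqref{eq:convergence_ITALEX-CT} controls only the \emph{distance} $\dist(\bx^R,\Lev_\omega(\omega^*))\leq\sqrt{\varepsilon}$, whereas we want a bound on the \emph{value} of $\omega$; since $\omega$ is merely finite and convex, it is Lipschitz only on compact sets, so I must first localize $\bx^R$ inside an a priori compact region. I would let $\bar{\bx}=\proj_{\Lev_\omega(\omega^*)}(\bx^R)$, so that $\norm{\bx^R-\bar{\bx}}=\dist(\bx^R,\Lev_\omega(\omega^*))\leq\sqrt{\varepsilon}$ and $\omega(\bar{\bx})\leq\omega^*$. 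Applying the $\kappa$-power error bound (Assumption~\ref{assumption:omega}\ref{ass:omega_error_bound}) at $\bar{\bx}$ with level $\alpha_0$, together with $\omega(\bar{\bx})\leq\omega^*\leq\bar{\omega}$ and $\omega(\bz^0)\leq\alpha_0$, gives $\dist(\bar{\bx},\Lev_\omega(\alpha_0))\leq(\gamma(\bar{\omega}-\omega(\bz^0)))^{\sfrac{1}{\kappa}}$, so $\bar{\bx}\in\mathcal{W}^0$; and the triangle inequality for set-distance, $\dist(\bx^R,\Lev_\omega(\alpha_0))\leq\norm{\bx^R-\bar{\bx}}+\dist(\bar{\bx},\Lev_\omega(\alpha_0))\leq R_0$, places $\bx^R\in\mathcal{W}^0$ as well.

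With both points localized, I would note that $\mathcal{W}^0$ is compact (a bounded-radius neighborhood of the compact set $\Lev_\omega(\alpha_0)$, whose compactness is Assumption~\ref{assumption:omega}\ref{ass:omega_compact}) and convex, so the finite real-valued convex function $\omega$ admits a finite Lipschitz constant $\ell_{\omega,R_0}$ on it. The conclusion then follows from the chain $\omega(\bx^R)-\omega^*\leq\omega(\bx^R)-\omega(\bar{\bx})\leq\ell_{\omega,R_0}\norm{\bx^R-\bar{\bx}}\leq\ell_{\omega,R_0}\sqrt{\varepsilon}$, using $\omega(\bar{\bx})\leq\omega^*$ in the first inequality. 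The main obstacle is precisely this localization step: turning the distance guarantee into a value guarantee hinges on producing a compact set that provably contains $\bx^R$ and whose radius $R_0$ is expressible through the problem data, and the error bound is exactly the tool that converts the value budget $\bar{\omega}-\omega(\bz^0)$ into the required distance budget.
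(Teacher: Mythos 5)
Your proposal is correct and follows essentially the same route as the paper: the first part is obtained by substituting $\Delta(\rho)=\rho^{\sfrac{\kappa}{2}}/\gamma$ from Theorem~\ref{thm:Expansion} into Theorem~\ref{thm:general_convergence}\ref{thm:ITALEX-CT_convergence}, and the outer-optimality bound is derived exactly as in the paper, by projecting $\bx^R$ onto $\Lev_\omega(\omega^*)$, using the error bound and the triangle inequality to place both points in the compact set $\mathcal{W}^0$, and then invoking the local Lipschitz continuity of the convex function $\omega$.
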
}
\begin{proof}
{The limit on $N$, $\varphi(\bx^R)$, and $\dist\left(\bx^{{R}},\Lev_{\omega}(\omega^*)\right)$ are a direct result of plugging $\Delta(\rho)$ from Theorem~\ref{thm:Expansion} into Theorem~\ref{thm:general_convergence}\ref{thm:ITALEX-CT_convergence}.}
%{By plugging the definition of $\Delta(\rho)$ from Theorem~\ref{thm:Expansion} to  \eqref{eq:bound_on_N} in  Theorem~\ref{thm:general_convergence}. we get that for each call $r$ to Algorithm~\ref{Alg:ITALEX_Generalized}, the total numbers of call to the approximation oracle is at most $$ N_r\leq \left\lceil \frac{4^{\sfrac{\kappa}{2}}\gamma\left(\omega^*-\omega(\bz^0)\right)}{{\epsilon_r}^{\sfrac{\kappa}{2}}}\right\rceil.$$
%From the definition of the sequence $\{\epsilon_r\}_{r=1}^R$
%\begin{align*}
%    N=\sum_{r=1}^R N_r\leq \sum_{r=1}^R\left( \frac{4^{\sfrac{\kappa}{2}}\gamma(\alpha_{r+1}-\alpha_{r})}{{\left(\epsilon_R 2^{R-r}\right)^{\sfrac{\kappa}{2}}}}+1\right)\leq \frac{4^{\sfrac{\kappa}{2}}\gamma(\omega^*-\omega(\bz^0))}{{\epsilon_R}^{\sfrac{\kappa}{2}}}+R.
%\end{align*}}
%\\
To show the second part,  we will first prove that $\bx^{{R}}$ and $\tilde{\bx}=\proj_{\Lev_{\omega}(\omega^*)}(\bx^{{R}})$ belong to set $\mathcal{W}^0$.
By definition of distance and projection we have that 
\begin{align*}
\dist({\bx}^{{R}},\Lev_{\omega}(\alpha_0))&\leq \norm{\bx^{{R}}-\proj_{\Lev_{\omega}(\alpha_0)}(\tilde{\bx})}\\
&\leq \norm{{\bx}^{{R}}-\tilde{\bx}}+\norm{\tilde{\bx}-\proj_{\Lev_{\omega}(\alpha_0)}(\tilde{\bx})}\\
&= \dist(\bx^{{R}},\Lev_{\omega}(\omega^*))+\dist(\tilde{\bx},\Lev_{\omega}(\alpha_0)).
\end{align*}
Moreover, by Assumption~\ref{assumption:omega}\ref{ass:omega_error_bound},
\begin{align*}
    \dist(\tilde{\bx},\Lev_{\omega}(\alpha_0)){^\kappa}&\leq \gamma(\omega^*-\alpha_0)\leq \gamma(\bar{\omega}-\omega(\bz^0)).
\end{align*}
Since \eqref{eq:last_iterate} from Theorem~\ref{thm:general_convergence} holds, the above inequalities imply that $\bx^{{R}},\tilde{\bx}\in \mathcal{W}^0$.
Since  $\omega$ is convex with $\dom(\omega)=\Real^n$,
\cite[Theorem 7.36]{beck2014introduction}  implies that $\omega$ is Lipschitz continuous over compact sets. By Assumption~\ref{assumption:omega}\ref{ass:omega_compact} we know that $\mathcal{W}^0$ is compact and thus $\ell_{\omega,R_0}$ exists. Thus, applying the Lipschitz property on $\bx^{{R}}$ and $\tilde{\bx}$ together with \eqref{eq:last_iterate} implies that
$$\omega(\bx^{{R}})-\omega^*\leq \ell_{\omega,R_0}\norm{\bx^{{R}}-\tilde{\bx}}{=} \ell_{\omega,R_0}\dist(\bx^{{R}},\Lev_{\omega}(\omega^*))\leq \ell_{\omega,R_0}\sqrt{\varepsilon}.$$ \vskip-20pt
\end{proof}

\subsection{Approximation oracle construction using first-order methods}
In this section, we show that one can construct approximation oracles using first-order methods. We start by presenting a general approach for constructing an approximation oracle based on the notion of an optimality measure. Then, we show how to apply this general approach to specific first-order methods such as GCG and PG. 

We start by making the following standard assumption on the structure of the inner function $\varphi$.
\begin{assumption}\label{assumption:phi} The inner function $\varphi\equiv f+g$ satisfies the following:
    \begin{enumerate}[label=(\alph*)]
        \item $f:\Real^n\rightarrow\Real$ is %closed, 
        convex and continuously differentiable with a Lipschitz-continuous gradient with constant $L_f$, \ie \begin{equation*}
        \|\nabla f(\bx)-\nabla f(\mathbf{y})\|\leq L_f\|\bx-\mathbf{y}\|,\; \forall \bx,\by\in\Real^n.\end{equation*}
        \item $g:\Real^n\rightarrow\Real\cup\{\infty\}$  
        is a proper, closed, and convex function.
        \end{enumerate}
\end{assumption}

For ease of notation in the following sections, we define the function $\hat{\varphi}^\alpha:\Real^n\times\Real^n\rightarrow\Real\cup\{\infty\}$ for any $\by=(\by_1,\by_2)$ to be
$$\hat{\varphi}^\alpha(\by)=\varphi(\by_1)+\norm{\by_1-\by_2}^2+\delta_{\Lev_\omega(\alpha)}(\by_2).$$
It is obvious that $\hat{\varphi}^\alpha$ is a composite function. Specifically, $\hat{\varphi}^\alpha=\hat{f}+\hat{g}^\alpha$, where $\hat{f}(\by)=f(\by_1)+\norm{\by_1+\by_2}^2$
is convex and continuously differentiable with $(L_f+2)$-Lipschitz continuous gradient, and 
$\hat{g}^\alpha(\by)=g(\by_1)+\delta_{\Lev_\omega(\alpha)}(\by_2)$.

\subsubsection{A General Approach for Approximation  Oracle Construction}
We start by presenting a general approach for the construction of approximation oracles. This approach is based on the existence of algorithms that satisfy convergence with regard to an optimality measure which we now define.
\begin{definition}\label{def:cont_opt_measure} A continuous optimality measure $\mu^\alpha:\dom(g)\times\Lev_\omega(\alpha)\rightarrow \Real_+$ is a continuous function with the following properties: \begin{enumerate}[label=(\roman*)]
    \item For any $(\by_1,\by_2)\in \dom(g)\times\Lev_{\omega}(\alpha)$ \begin{equation*}
        \mu^\alpha(\by)\geq \varphi(\by_1)+\norm{\by_1-\by_2}^2-h(\alpha)\equiv \hat{\varphi}^\alpha(\by)-h(\alpha).
    \end{equation*}
    %\item $\mu^\alpha(\cdot)$ is continuous.
    \item $\mu^\alpha(\by)=0$ if and only if $\by$ is an optimal solution of problem \eqref{def:h}, and specifically, $\by_2=\proj_{\Lev_{\omega}(\alpha)}(\by_1)$.
\end{enumerate}
\end{definition}

Now assume that we have at our disposal an iterative optimization algorithm $\mathcal{A}$. Given a starting point $\by^{0}$ and a level set parameter $\alpha$, utilizing $\mathcal{A}$ to solve problem~\eqref{def:h} generates iterates $\left\{\left(\by^{j+1},\mu^{\alpha}(\by^j)\right)\right\}_{j\in\mathbb{N}}$ satisfying  
\begin{equation}\label{eq:alg_cond}\lim_{j\rightarrow\infty}\mu^\alpha(\by^j)=0,\end{equation}
with respect to some associated continuous optimality measure $\mu^\alpha$.
Notice that we require algorithm $\mathcal{A}$ to compute $\mu^\alpha(\by^j)$ as part of generating iterate $\by^{j+1}$.
We can now define an approximation oracle that utilises such an algorithm. This approximation oracle is presented in Algorithm~\ref{alg:approx_alg}.

\begin{figure}
\centering
\begin{minipage}{0.9\textwidth}
\begin{algorithm}[H]\caption{A {$\mu^\alpha$} based Approximation Algorithm \label{alg:approx_alg}}
\SetAlgoLined
{\bf Input:} Initial point $\by^0\equiv(\by_1^0,\by_2^0)\in \dom(g)\times\Lev_{\omega}(\alpha)$, $\alpha\leq \omega^*$, $\bar{\varphi}\geq \varphi^*$, $\varepsilon$, and algorithm $\mathcal{A}$ satisfying \eqref{eq:alg_cond}.\\
\For{$j=0,1,2,...$}{
Apply one iteration of $\mathcal{A}$ at point $\by^{j}$ to obtain $\by^{j+1}$ and $\mu^\alpha(\by^j)$.\quad  ($\mathcal{A}$-STEP)\\
\If{$\hat{\varphi}^\alpha(\by^j)-\bar{\varphi}\leq\varepsilon$}{Exit and \textbf{return} $(\rho,\by)=(0,\by^j)$}
\If{$\hat{\varphi}^\alpha(\by^j)-\bar{\varphi}-\mu^\alpha(\by^j)> \frac{\varepsilon}{2}$}{Exit and \textbf{return} $(\rho,\by)=(\hat{\varphi}^\alpha(\by^j)-\bar{\varphi}-\mu^\alpha(\by^j),\by^j)$}
}
\end{algorithm}
\end{minipage}
\end{figure}

We will now prove that Algorithm~\ref{alg:approx_alg} is indeed an approximation algorithm.
\begin{theorem}
Let $\mathcal{A}$ be an algorithm with associated continuous optimization oracle $\mu^\alpha$ satisfying \eqref{eq:alg_cond}. Then, Algorithm~\ref{alg:approx_alg} is a well defined approximation algorithm, which concludes after a finite number of iterations.
\end{theorem}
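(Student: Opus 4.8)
The plan is to verify in turn the three assertions — well-definedness, validity as an approximation oracle in the sense of Definition~\ref{Def:APP_Oracle}, and finite termination — treating the last as the crux.

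First I would establish well-definedness. Since $\mathcal{A}$ is an optimization algorithm for problem~\eqref{def:h} carrying the optimality measure $\mu^\alpha$, which by Definition~\ref{def:cont_opt_measure} is defined on $\dom(g)\times\Lev_\omega(\alpha)$, every iterate $\by^j=(\by_1^j,\by_2^j)$ it produces stays in $\dom(g)\times\Lev_\omega(\alpha)$. Hence $\hat{\varphi}^\alpha(\by^j)=\varphi(\by_1^j)+\norm{\by_1^j-\by_2^j}^2$ is finite and $\mu^\alpha(\by^j)$ is computed at each step, so the $\mathcal{A}$-STEP and both termination tests are well defined, and any returned pair automatically satisfies $\by_1^j\in\dom(g)$ and $\by_2^j\in\Lev_\omega(\alpha)$, as required by Definition~\ref{Def:APP_Oracle}.

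Next I would confirm that whichever exit is taken yields a triple meeting Definition~\ref{Def:APP_Oracle}. If the first test fires, then $\varphi(\by_1^j)+\norm{\by_1^j-\by_2^j}^2=\hat{\varphi}^\alpha(\by^j)\leq\bar\varphi+\varepsilon$ with $\rho=0$, which is precisely the second alternative in the definition. If the second test fires, the returned value $\rho=\hat{\varphi}^\alpha(\by^j)-\bar\varphi-\mu^\alpha(\by^j)$ satisfies $\rho>\varepsilon/2>0$ by the test itself, while property~(i) of the optimality measure (Definition~\ref{def:cont_opt_measure}) gives $\mu^\alpha(\by^j)\geq\hat{\varphi}^\alpha(\by^j)-h(\alpha)$, which rearranges to $\rho\leq h(\alpha)-\bar\varphi$; thus $\rho\in[\varepsilon/2,\,h(\alpha)-\bar\varphi]$, the first alternative.

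Finally, for finite termination I would argue by contradiction: suppose neither test ever fires. Negating the first test gives $\hat{\varphi}^\alpha(\by^j)-\bar\varphi>\varepsilon$ for all $j$, and negating the second gives $\mu^\alpha(\by^j)\geq\hat{\varphi}^\alpha(\by^j)-\bar\varphi-\varepsilon/2$; chaining these yields $\mu^\alpha(\by^j)>\varepsilon/2$ for every $j$, contradicting $\lim_{j\to\infty}\mu^\alpha(\by^j)=0$ from \eqref{eq:alg_cond}. Consequently one test must fire after finitely many iterations. The only delicate point to get right is the strict-versus-nonstrict inequality bookkeeping in the two exit conditions, together with the overlap regime $\varepsilon/2\leq h(\alpha)-\bar\varphi\leq\varepsilon$ in which both oracle outputs are admissible; the convergence contradiction itself is short and is the decisive step.
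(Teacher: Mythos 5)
Your proof is correct, and the two verification steps (well-definedness of the iterates and validity of whichever exit fires, via property (i) of Definition~\ref{def:cont_opt_measure} to get $\rho\leq h(\alpha)-\bar\varphi$) match the paper's argument. Where you genuinely diverge is the finite-termination step: the paper splits into cases according to the value of $h(\alpha)$ --- if $h(\alpha)\leq\bar\varphi+\varepsilon-\delta$ for some $\delta>0$ it argues the first test must eventually fire, and if $h(\alpha)\geq\bar\varphi+\varepsilon$ it argues the second test must eventually fire --- whereas you run a single uniform contradiction: if neither test ever fires, then $\hat{\varphi}^\alpha(\by^j)-\bar\varphi>\varepsilon$ and $\mu^\alpha(\by^j)\geq\hat{\varphi}^\alpha(\by^j)-\bar\varphi-\varepsilon/2>\varepsilon/2$ for all $j$, contradicting \eqref{eq:alg_cond}. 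Your version is cleaner: it needs no knowledge of where $h(\alpha)$ sits relative to $\bar\varphi+\varepsilon$, it handles the overlap regime $\varepsilon/2\leq h(\alpha)-\bar\varphi\leq\varepsilon$ without comment, and it sidesteps a slightly loose spot in the paper (the claim that in the first case the algorithm returns $(0,\by^{J_1})$ ignores that the second exit could legitimately fire earlier). The paper's case analysis buys a little more information --- it tells you which exit is taken as a function of $h(\alpha)$ --- but that information is not needed for the theorem. One small bookkeeping note: the paper twice cites Definition~\ref{def:cont_opt_measure}(ii) where property (i) is what is actually used; your citation of property (i) is the correct one.
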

% \begin{comment}
\begin{proof}
First we will show that the algorithm will terminate after a finite number of iterations. Since $\mu^\alpha(\by^j)\rightarrow 0$, Definition~\ref{def:cont_opt_measure}(ii) implies $\hat{\varphi}^\alpha(\by^j)\rightarrow h(\alpha)$.

Therefore, if there exists a $\delta>0$ such that $h(\alpha)\leq \bar{\varphi}+\varepsilon-\delta$, after some finite number of iterations $J_1$, $\hat{\varphi}^\alpha(\by^{J_1})-h(\alpha)\leq \delta$. Thus, the algorithm will terminate after at most $J_1$ steps with an output $(0,y^{J_1})$ satisfying
$$\varphi(\by^{J_1})\leq \bar{\varphi}+\varepsilon.$$

Assume that the algorithm did not terminate by the first stopping condition. %We will show it must be terminated by the second stopping conditions after a finite number of iterations.
Define the function
$$\rho(\by)=\hat{\varphi}^\alpha(\by)-\bar{\varphi}-\mu^\alpha(\by)$$
for any $ \by\in \dom(g)\times \Lev_{\omega}(\alpha)$. Definition~\ref{def:cont_opt_measure}(ii) implies that
\begin{equation*}
     \rho(\by)=\hat{\varphi}^\alpha(\by)-\bar{\varphi}-\mu^\alpha(\by)\leq \hat{\varphi}^\alpha(\by)-\bar{\varphi}-(\hat{\varphi}^\alpha(\by)-h(\alpha))=h(\alpha)-\bar{\varphi}.
 \end{equation*}
Since it did not terminate by the first condition, there are two options: (i) $h(\alpha)\geq \bar{\varphi}+\varepsilon$ (ii) it was terminated by the second conditions for some $J_2\leq J_1$. In case (i), for a large enough $J_2$, we will have that 
$$\mu^{\alpha}(\by^{J_2})< \frac{\varepsilon}{2}\leq h(\alpha)-\bar{\varphi}-\frac{\varepsilon}{2}\leq   \hat{\varphi}^\alpha(\by^{J_2})-\bar{\varphi}-\frac{\varepsilon}{2},$$
where the first inequality follows from \eqref{eq:alg_cond}, and the last inequality stems from the suboptimality of $\by^{J_2}$. In both cases (i) and (ii), ITALEX-FT terminates after a finite number of steps by the second stopping condition, with $\rho=\rho(y^{J_2})\in [\frac{\varepsilon}{2},h(\alpha)-\bar{\varphi}]$.

To conclude, the algorithm is terminated by one of the stopping conditions after a finite number of steps, and satisfying the requirements of the approximation oracle in Definition~\ref{Def:APP_Oracle}. %If it is terminated by the first condition, it outputs $\by^{J_1}\in \Lev_{\hat{\varphi}^\alpha}(\bar{\varphi}+\varepsilon)$ and $\rho=0$. %, which implies $\varphi(\by_1)+\dist(\by_1,\Lev_\omega(\alpha))^2\leq \varepsilon$. 
%If it terminated by the second condition, it outputs $\rho\in [\frac{\varepsilon}{2},h(\alpha)-\bar{\varphi}]$ and a feasible $\by^{J_2}\in \dom(g)\times \Lev_{\omega}(\alpha)$, thus satisfying the requirements of the approximation oracle in Definition~\ref{Def:APP_Oracle}
\end{proof}

\subsubsection{Convergence rate analysis using sufficient decrease assumption}
In the previous section, we showed that, given a continuous optimality measure $\mu^{\alpha}$ and an iterative algorithm $\mathcal{A}$ for which it converges to $0$, we can construct an approximation oracle. Under some additional assumptions on algorithm $\mathcal{A}$ used in Algorithm~\ref{alg:approx_alg}, we can bound the number of total iterations of the algorithm $\mathcal{A}$ needed during the execution of ITALEX-CT. We will later show that these conditions are satisfied by widely used first-order methods such as GCG and PG.\footnote{In fact, it can be shown that they are satisfied by any PDA method as defined in \cite{PDA}.}
\begin{assumption}\label{assum:sufficient_decrease}
For any $\alpha\leq \omega^*$, and any starting point $\by^0\in\dom(g)\times\Lev_{\omega}(\alpha)$, algorithm $\mathcal{A}$ with optimality measure $\mu^\alpha$ generates a sequence $\newline \Bigl\{(\by^j,\mu^{\alpha}(\by^{j-1}))\Bigr\}_{j\in\mathbb{N}}$ that satisfies
\begin{equation}\label{eq:Sufficient_Decrease}
    \hat{\varphi}^\alpha(\by^{j-1})-\hat{\varphi}^\alpha(\by^{j})\geq \min\{\eta_1\mu^\alpha(\by^{j-1}),{\eta_2\left(\mu^\alpha(\by^{j-1})\right)^2}\},
\end{equation}
for some $\eta_1\in(0,1)$%\cup\{\infty\}$ 
and $\eta_2\in(0,+\infty]$.% such that  $\min\{\eta_1,\eta_2\}<\infty$ .
\end{assumption}
Using this sufficient decrease assumption on algorithm $\mathcal{A}$, in the convergence analysis of ITALEX-FT we will analyze the total number of iterations of $\mathcal{A}$ used by ITALEX-CT. 
In order to establish such a result we require the following technical lemma, the proof of which is given in  appendix~\ref{appendix:proof_lemma_decreasing_sequence}.
\begin{lemma}\label{lemma:decreasing_Sequence}
Let \(\{\xi_p\}_{p\in\mathbb{N}}\) be a non-negative sequence, and let $\eta>0$ such that  $\xi_{p+1}\leq\xi_p-\eta \xi_{p+1}^2$ for any $p\in\mathbb{N}$ . Then, for any $p\in \mathbb{N}$ \vskip-5pt
\begin{equation*}
    \xi_p\leq\frac{\max\{\frac{2}{\eta},\xi_1\}}{p}.
\end{equation*}
\end{lemma}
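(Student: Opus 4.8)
The plan is to split into two regimes and prove the bound separately in each. Let me write $C = \max\{\frac{2}{\eta}, \xi_1\}$ and aim to show $\xi_p \le C/p$ by induction on $p$. The base case $p=1$ is immediate since $\xi_1 \le C$. For the inductive step, I would assume $\xi_p \le C/p$ and try to propagate this to $\xi_{p+1}$, using the recursion $\xi_{p+1} \le \xi_p - \eta\xi_{p+1}^2$. The natural manipulation is to rewrite the recursion as $\eta\xi_{p+1}^2 + \xi_{p+1} - \xi_p \le 0$, which (since $\xi_{p+1}\ge 0$) bounds $\xi_{p+1}$ by the positive root of the associated quadratic, but I expect that route to be messy. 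A cleaner approach is to work with reciprocals.

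First I would observe that since the sequence satisfies $\xi_{p+1} \le \xi_p - \eta\xi_{p+1}^2 \le \xi_p$, it is nonincreasing, and in particular $\xi_{p+1}\le\xi_1\le C$. The key step is to manipulate the recursion into a statement about $1/\xi_p$. Dividing the inequality $\eta\xi_{p+1}^2 \le \xi_p - \xi_{p+1}$ by $\xi_p\xi_{p+1}$ (assuming for now $\xi_{p+1}>0$; the case $\xi_{p+1}=0$ is trivial since then $\xi_q=0$ for all $q\ge p+1$) gives
\begin{equation*}
\eta\frac{\xi_{p+1}}{\xi_p} \le \frac{1}{\xi_{p+1}} - \frac{1}{\xi_p}.
\end{equation*}
The obstacle is the ratio $\xi_{p+1}/\xi_p$ on the left, which I want to bound below by a constant. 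Since the sequence is nonincreasing this ratio is at most $1$, but I need a \emph{lower} bound; the useful observation is that I can instead telescope after bounding $\xi_{p+1}/\xi_p$ from below in the regime where $\xi_{p+1}$ is not too small. I anticipate the main obstacle is precisely controlling this ratio, and the resolution is to separate the analysis according to whether $\xi_{p+1} \ge \frac{2}{\eta(p+1)}$ or not.

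Concretely, I would complete the induction as follows. Assume $\xi_p \le C/p$. If $\xi_{p+1} \le \frac{2}{\eta(p+1)} \le \frac{C}{p+1}$, we are immediately done. Otherwise $\xi_{p+1} > \frac{2}{\eta(p+1)}$, so $\eta\xi_{p+1} > \frac{2}{p+1}$, and then from $\xi_{p+1}\le \xi_p - \eta\xi_{p+1}^2$ I estimate
\begin{equation*}
\xi_{p+1} \le \xi_p - \eta\xi_{p+1}^2 < \xi_p - \frac{2}{p+1}\xi_{p+1}.
\end{equation*}
Rearranging yields $\xi_{p+1}\bigl(1 + \frac{2}{p+1}\bigr) < \xi_p \le \frac{C}{p}$, so $\xi_{p+1} < \frac{C}{p}\cdot\frac{p+1}{p+3} \le \frac{C}{p}\cdot\frac{p}{p+1} = \frac{C}{p+1}$, where the last inequality holds because $(p+1)^2 \le p(p+3)$. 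This closes the induction and gives $\xi_p \le C/p$ for all $p$, as claimed. I would double-check the final chain of algebraic inequalities carefully, since getting the constant to come out as exactly $\max\{2/\eta,\xi_1\}$ rather than a larger multiple is the one place where the estimates must be tight.
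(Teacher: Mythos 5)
Your proof is correct and follows essentially the same route as the paper's: induction on $p$ with the constant $C=\max\{\frac{2}{\eta},\xi_1\}$, using the quadratic term $\eta\xi_{p+1}^2$ to upgrade the bound $\frac{C}{p}$ to $\frac{C}{p+1}$ (your two-case direct argument versus the paper's proof by contradiction is only a cosmetic difference, as both reduce to the inequality $(p+1)^2\leq p(p+3)$, equivalently $p\geq 1$). The reciprocal detour in your second paragraph is unused and could be deleted; the final chain of inequalities checks out.
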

We are now ready to prove the complexity result for {ITALEX-CT} using an approximation oracle utilizing a sub-algorithm $\mathcal{A}$ that satisfies the sufficient decrease property of Assumption \ref{assum:sufficient_decrease}. {For the analysis, we fix an iteration $r$ of ITALEX-CT, and define $M_k$ to be the number of calls to $\mathcal{A}$ during the $k$th iteration of ITALEX-FT.
Additionally, for any $j\in[M_k]$ we also define the vector $\by^{k,j}$ to be the output of he 
$j$th call to algorithm $\mathcal{A}$ during the $k$th iteration of ITALEX-FT, and initialize $\by^{k,0}=(\tilde{\bx}^{k-1},\tilde{\bz}^{k-1})$. Note that $(\tilde{\bx}^{k},\tilde{\bz}^{k})=\by^{k,M_{k}-1}$. Using these definitions we prove the following result.}
\begin{theorem}\label{theorem:Convergence}
Consider the {ITALEX-CT} algorithm ({Algorithm~\ref{Alg:ITALEX_epsilon_change}}) that uses the approximation oracle presented in Algorithm~\ref{alg:approx_alg} with $\mathcal{A}$ satisfying Assumption~\ref{assum:sufficient_decrease}. Then, {for any $\varepsilon>0$, the total number of $\mathcal{A}$ iterations used by ITALEX-CT to produce $\bx^R$ satisfying \eqref{eq:convergence_ITALEX-CT} %such that $$\varphi(\bx)\leq\varphi^*+\varepsilon,\quad \dist\left(\bx,\Lev_\omega(\omega^*)\right)\leq\sqrt{\varepsilon}$$  
is given by {$\bar{M} \leq K_1+K_2+N$},
where
\begin{align*}
    K_1&=%\begin{cases}
    \log_{\frac{1}{1-\eta_1}}\left(\min\left\{\frac{\eta_2}{\eta_1},\frac{4}{\epsilon_1}\right\}\left(\varphi(\bx^0)+\norm{\bz^0-\bx^0}^2-\bar{\varphi}_1\right)\right) ,%&\eta_1\in (0,1)\\ \frac{\max\left\{8,4(\varphi(\bx^0)+\norm{\bz^0-\bx^0}^2-\bar{\varphi}_1)\eta_2\right\}}{\eta_2\epsilon_1},& \eta_1=\infty \end{cases}
    \\
    K_2&=
    %\begin{cases}
    \frac{32}{\eta_2\varepsilon}+\left(\log_{\frac{1}{1-\eta_1}}\left(9\right)+2\right)
    \left(\left\lceil\log_2\left(\frac{\epsilon_1}{\varepsilon}\right)\right\rceil+1\right),
    % \log_2\left(\left\lceil\frac{\epsilon_1}{\varepsilon}\right\rceil+1\right)
    %,\;&\;\eta_1\in(0,1)\\
    %\frac{32}{\eta_2\varepsilon}+10\left(\left\lceil\log_2\left(\frac{\epsilon_1}{\varepsilon}\right)\right\rceil+1\right),
    % \left(\left\lceil\log_2\left(\frac{\epsilon_1}{\varepsilon}\right)\right\rceil+1\right)
    %\qquad\qquad\qquad\qquad\qquad\quad\quad &\;\eta_1=\infty,\end{cases}
\end{align*}
$N$ is the total number of calls to the approximation oracle, and $R$ is the total number of iteration of ITALEX-CT with bounds given in Theorem~\ref{thm:general_convergence}}\ref{thm:ITALEX-CT_convergence}.
\end{theorem}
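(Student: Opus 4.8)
The plan is to merge all iterations of the sub-algorithm $\mathcal{A}$ performed across the three nested loops (ITALEX-CT over $r$, ITALEX-FT over $k$, and the oracle loop of Algorithm~\ref{alg:approx_alg} over $j$) into a single sequence and to control the monotone potential $\Phi=\hat{\varphi}^{\alpha}(\by)$ along it. First I would show $\Phi$ is non-increasing over the \emph{entire} execution: inside an oracle call this is the sufficient-decrease property \eqref{eq:Sufficient_Decrease}; at an expansion step the iterate is unchanged while $\Lev_\omega(\alpha)$ only grows, so $\hat{\varphi}^{\alpha}$ cannot increase; and at the junctions between calls and between outer iterations the warm start $\by^{k,0}=(\tilde{\bx}^{k-1},\tilde{\bz}^{k-1})$ carries the point forward. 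Because $h(\cdot)$ is non-increasing (Lemma~\ref{lemma:h_decreasing}) and $\alpha\le\omega^*$, we have $\Phi\ge h(\alpha)\ge\varphi^*$, so the total drop available is $\Phi_0-\varphi^*=\varphi(\bx^0)+\norm{\bz^0-\bx^0}^2-\varphi^*$. I would also record that while an oracle call with tolerance $\epsilon_r/2$ is still running, neither stopping test has fired, which forces $\mu^{\alpha}(\by^j)\ge\tfrac12(\hat{\varphi}^{\alpha}(\by^j)-\bar{\varphi}^r)>\epsilon_r/4$; combined with $\mu^{\alpha}(\by)\ge\hat{\varphi}^{\alpha}(\by)-h(\alpha)$ from Definition~\ref{def:cont_opt_measure}, this links $\mu$ to both the ``$\bar{\varphi}$-gap'' $\zeta=\hat{\varphi}^{\alpha}-\bar{\varphi}^r$ and the sub-problem gap $\nu=\hat{\varphi}^{\alpha}-h(\alpha)$.

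Next I would split \eqref{eq:Sufficient_Decrease} into two regimes by which term attains the minimum. In the linear regime $\mu^{\alpha}(\by^{j-1})\ge\eta_1/\eta_2$, the decrease is at least $\eta_1\mu^{\alpha}(\by^{j-1})$; tracking $\nu$ (for which $\mu\ge\nu$ holds exactly and $\alpha$ is fixed within a call) gives the clean geometric decay $\nu_{j}\le(1-\eta_1)\nu_{j-1}$, hence a logarithmic step count to reduce the gap by any fixed factor. In the sublinear regime $\mu^{\alpha}(\by^{j-1})<\eta_1/\eta_2$ the decrease is at least $\eta_2(\mu^\alpha)^2\ge(\eta_2/4)\zeta^2$; since $\zeta$ is non-increasing across calls at fixed $\bar{\varphi}^r$, the $\zeta$-values satisfy the hypothesis of Lemma~\ref{lemma:decreasing_Sequence} with $\eta=\eta_2/4$, yielding $\zeta_p\le\max\{8/\eta_2,\xi_1\}/p$. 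A short check shows that once $\zeta\le2\eta_1/\eta_2$ every subsequent step --- linear- or sublinear-type --- satisfies this recursion, so the linear regime is confined to one initial burst per outer iteration.

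The accounting then proceeds per outer iteration. Because $\eta_1<1$ gives $\max\{8/\eta_2,2\eta_1/\eta_2\}=8/\eta_2$, reducing $\zeta$ from the threshold $2\eta_1/\eta_2$ to the oracle tolerance $\epsilon_r/2$ costs at most $16/(\eta_2\epsilon_r)$ sublinear steps; summing the geometric series $\sum_{r=1}^R 16/(\eta_2\epsilon_r)$ with $\epsilon_r=\epsilon_1 2^{-(r-1)}$ and $\epsilon_R\le\varepsilon$ telescopes to at most $32/(\eta_2\varepsilon)$ --- the first term of $K_2$. For the linear bursts, outer iteration $1$ starts from the arbitrary gap $\varphi(\bx^0)+\norm{\bz^0-\bx^0}^2-\bar{\varphi}_1$ and needs to reach $\max\{\eta_1/\eta_2,\epsilon_1/4\}$, which gives the bound $K_1$. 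For $r\ge2$ the warm start controls the starting gap: ITALEX-FT terminated run $r-1$ with $\Phi\le\bar{\varphi}^{r-1}+\epsilon_{r-1}/2$, and $\bar{\varphi}^{r-1}-\bar{\varphi}^r\le\epsilon_r$ since both lie in $[\varphi^*,\varphi^*+\epsilon_{r-1}/2]$, so $\zeta_0^{(r)}\le2\epsilon_r$, which is within a bounded multiplicative factor of the analysis threshold $\max\{\epsilon_r/2,2\eta_1/\eta_2\}$. Hence each re-entry costs only $\log_{1/(1-\eta_1)}(9)+2$ linear steps, contributing the $(\log_{1/(1-\eta_1)}(9)+2)R$ term. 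Each of the $N$ oracle calls additionally spends one $\mathcal{A}$-step on the test that fires its stopping condition, giving the $+N$; substituting the bounds on $R$ and $N$ from Theorem~\ref{thm:general_convergence}\ref{thm:ITALEX-CT_convergence} then yields $\bar{M}\le K_1+K_2+N$.

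The hard part is the bookkeeping that couples the three loops under warm-starting. Establishing that a single potential stays globally monotone despite the simultaneous growth of $\alpha$ (which lowers $h(\alpha)$ and re-inflates $\nu$ at every expansion) and the drops of $\bar{\varphi}^r$ at outer boundaries forces one to track the right gap in the right regime --- $\nu$ for the exact $(1-\eta_1)$ factor inside a call, $\zeta$ for a Lemma~\ref{lemma:decreasing_Sequence} argument that survives expansions. The crux is the two quantitative claims that collapse the constants: that the per-run starting gap is $O(\epsilon_r)$ (so the linear re-entry is a universal constant, not a growing $\log(1/\epsilon_r)$), and that the sublinear series is dominated by its last term $\epsilon_R\approx\varepsilon$, telescoping $\sum_r 1/\epsilon_r$ to $O(1/\varepsilon)$ rather than $\Theta(R/\varepsilon)$.
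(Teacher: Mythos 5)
Your proposal follows essentially the same route as the paper: glue the per-call optimality gaps into one monotone sequence via the warm starts (the paper's $d^{k,M_k-1}=d^{k+1,0}$ identity), split into a geometric-decay phase and a quadratic-decrease phase handled by Lemma~\ref{lemma:decreasing_Sequence}, show the re-entry gap at outer iteration $r$ is $O(\epsilon_r)$ so each linear burst after the first costs only $\log_{\frac{1}{1-\eta_1}}(9)+O(1)$ steps, and sum the geometric series $\sum_r 1/\epsilon_r$.

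One quantitative slip: in the sublinear phase you track $\zeta=\hat{\varphi}^{\alpha}-\bar{\varphi}^r$ through the weaker relation $\mu^\alpha\geq\tfrac12\zeta$, which gives a per-step decrease of $(\eta_2/4)\zeta^2$ and hence $16/(\eta_2\epsilon_r)$ steps per outer iteration; summing with $\epsilon_R\geq\varepsilon/2$ (note your inequality $\epsilon_R\leq\varepsilon$ points the wrong way for this deduction) yields $64/(\eta_2\varepsilon)$, not the stated $32/(\eta_2\varepsilon)$. The paper avoids this factor by working with the shifted gap $d^{k,j}=\hat{\varphi}^{\tilde\alpha_{k-1}}(\by^{k,j})-\bar{\varphi}_r-\tfrac{\epsilon_r}{4}$, for which the non-firing of the second stopping test gives $\mu^{\tilde\alpha_{k-1}}(\by^{k,j-1})\geq d^{k,j-1}$ with no factor of $\tfrac12$, so Lemma~\ref{lemma:decreasing_Sequence} applies with $\eta=\eta_2$ and the target $d\leq\epsilon_r/4$ costs only $8/(\eta_2\epsilon_r)$ steps. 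Switching to that gap variable makes your argument reproduce the stated constants exactly; everything else in your accounting (global monotonicity across expansions, the $9\epsilon_{r+1}/4$-type re-entry bound, the $+N$ for the terminating steps) matches the paper.
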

\begin{proof}
{We begin by computing the total number of iterations of $\mathcal{A}$ for some fixed ITALEX-CT iteration $r\in[R]$, which we denote by $\bar{M}_{r}$. Let $N_r$ be the total number of ITALEX-FT iterations, or equivalently, the total number of calls to the approximation oracle 
within the $r$th ITALEX-CT iteration, 
then %iteration as $N_r$. %For each iteration $k\in[N_r]$ of the approximation oracle we denote by $M_{k}$ the number of iterations of $\mathcal{A}$ used, and its
%Therefore, it follows from the definition of $M_k$ that 
$\bar{M}_{r}=\sum_{k=1}^{N_r}M_k$. Moreover, the stopping condition for ITALEX-FT ensures that in its last ($N_r$) iteration the first stopping criteria of the approximation oracle is used and so the last inner iterate (iterate $M_{N_r}-1$ in ITALEX-FT iteration $N_{r}$) satisfies
\begin{equation}\label{eq:stopping_delta}
\hat{\varphi}^{\tilde\alpha_{N_{{r}}-1}}(\by^{N_{{r}},M_{N_{{r}}}-1})\leq \bar{\varphi_r}+\frac{{\epsilon_r}}{2}.
\end{equation}}
Moreover, for any iteration {$k\in[N_{r}]$} and any $j\in[M_{k}-1]$, since the stopping condition of Algorithm~\ref{alg:approx_alg} was not satisfied in the previous inner iteration %we have that 
\begin{equation}\label{eq:Convergence_Theorem_1}
  \mu^{\tilde\alpha_{k-1}}(\by^{k,j-1})\geq \hat{\varphi}^{\tilde\alpha_{k-1}}(\by^{k,j-1})-{\bar{\varphi}_{r}-\frac{{\epsilon_r}}{4}}\geq 0.  
\end{equation}
Furthermore, Assumption~\ref{assum:sufficient_decrease} states that for all  {$k\in[N_{r}]$} and any $j\in[M_{k}]$
\begin{align}
    \hspace{-6pt}\hat{\varphi}^{\tilde\alpha_{k-1}}(\by^{k,j})\hspace{-1pt}\leq\hspace{-1pt} \hat{\varphi}^{\tilde\alpha_{k-1}}(\by^{k,j-1})\hspace{-1pt}-\hspace{-1pt}\min\{\eta_1\mu^{\tilde\alpha_{k-1}}(\by^{k,j-1}),\eta_2\mu^{\tilde\alpha_{k-1}}(\by^{k,j-1})^2\}.\hspace{-4pt}\label{eq:Convergence_Theorem_2}
\end{align}
Defining 
$$d^{k,j}\equiv\hat{\varphi}^{\tilde\alpha_{k-1}}(\by^{k,j})-{\left(\bar{\varphi}_{r}+\frac{\epsilon_r}{4}\right)},\quad {k\in[N_{r}]},j\in\{0,\ldots,M_{k}\}$$ 
we can plug bound \eqref{eq:Convergence_Theorem_1} into \eqref{eq:Convergence_Theorem_2} to obtain for all 
$j\in[M_{k}]$
\begin{equation}\label{eq:Convergence_Theorem_3}
    d^{k,j}\leq d^{k,j-1}-\min\{\eta_1d^{k,j-1},\eta_2(d^{k,j-1})^2\}
\end{equation}
Moreover, since $\tilde\alpha_{k-1}\leq \tilde\alpha_{k}$ and $\by^{k,M_k-1}=\by^{k+1,0}
$, we have $\hat{\varphi}^{\tilde\alpha_{k-1}}(\by^{k,M_k-1})=\hat{\varphi}^{\tilde\alpha_{k}}(\by^{k+1,0})$ and $d^{k,M_k-1}=d^{k+1,0}$.
Thus, we can define $\xi_{0}=d^{1,0}$, and 
\begin{align*}
    \xi_{p}=d^{k,j}, \; p=\sum_{i=1}^{k-1}(M_{i}-1)+j, \; 1\leq j\leq M_{k}-1,\; k=1,\ldots,{N_{r}}
\end{align*}
\noindent\ie\; $\xi_{p}$ is a unified sequence of all $d^{k,j}$, ordered first by the {ITALEX-FT iteration index $k$, and then by the inner iteration index $j$ not accounting for inner iteration $0$ at each $k$.} Thus, \eqref{eq:Convergence_Theorem_3} is equivalent to
\begin{equation}\label{eq:Convergence_Theorem_4}
    \xi_{p}\leq \xi_{p-1}-\min \{\eta_1 \xi_{p-1},\eta_2\xi_{p-1}^2\}.
\end{equation}
By the definition of $d^{k,j}$ {and since $\by^{1,0}=(\tilde\bx^{0},\tilde\bz^{0})=(\bx^{r-1},\bz^{r-1})$,} we have that
\begin{equation}\label{eq:Convergence_Theorem_3a}\xi_{0}=d^{1,0}{= \hat{\varphi}^{\tilde\alpha_{0}}(\by^{1,0})}-{\bar{\varphi}_{r}-\frac{\epsilon_r}{4}=\hat{\varphi}^{\alpha_{r-1}}(\bx^{r-1},\bz^{r-1})-\bar{\varphi}_{r}-\frac{\epsilon_r}{4}}, 
\end{equation}
and from \eqref{eq:stopping_delta} we deduce that $\xi_{p}\leq {\frac{\epsilon_r}{4}}$ for {$p=\bar{M_{r}}-N_{r}$}.\\
We look at the values of $\xi_{p}$ in two stages: (1) $p<{ K_{1,r}}$ in which $\xi_{p}> \frac{\eta_1}{\eta_2}$ (2) {$K_{1,r} \leq p\leq  \bar{M_{r}}-N_{r}$} 
in which $\xi_{p}\leq \frac{\eta_1}{\eta_2}$. 
In stage 1, %which is only possible if $\eta_1\in(0,1)$, 
the minimum in \eqref{eq:Convergence_Theorem_4} is attained in the first term, and \eqref{eq:Convergence_Theorem_4} is equivalent to
%\begin{equation*}\label{eq:Convergence_Theorem_5}
 $\xi_{p}\leq (1-\eta_1)\xi_{p-1}$.
%\end{equation*}
Thus, the smallest {$K_{1,r}$} such that {$\xi_{K_{1,r}}\leq\max\{\frac{{\epsilon_r}}{4},\frac{\eta_1}{\eta_2}\}$} is bounded by
\begin{equation*}%\label{eq:Convergence_Theorem_6}
    {K_{1,r}}=\left\lceil \log_{\frac{1}{1-\eta_1}}\left(\min\left\{\frac{\eta_2}{\eta_1},\frac{4}{{\epsilon_r}}\right\}{\left(\hat{\varphi}^{\alpha_{r-1}}(\bx^{r-1},\bz^{r-1})-\bar{\varphi}_{r}-\frac{\epsilon_r}{4}\right)} \right)  \right\rceil.
\end{equation*}
Let {$K_{2,r}=\bar{M_{{r}}}-N_{{r}}-K_{1,{r}}$}. {$K_{2,r}>0$} only if $\frac{{\epsilon_r}}{4}< \frac{\eta_1}{\eta_2}$. Thus, if {$K_{2,r}>0$},   any {$K_{1,r} \leq p\leq K_{1,r}+K_{2,r}$}, \ie\;  any second stage iteration, satisfies $$\xi_{p+1}\leq \xi_{p}-\min \{\eta_1 \xi_{p},\eta_2\xi_{p}^2\}= \xi_{p}-\eta_2\xi_{p}^2\leq\xi_{p}-\eta_2\xi_{p+1}^2$$
where the equality follows from $\xi_{p}\leq\frac{\eta_1}{\eta_2}$ and \eqref{eq:Convergence_Theorem_4}, and the inequality results from $\xi_{p}$ being nonincreasing.
Thus, by Lemma~\ref{lemma:decreasing_Sequence} \begin{equation}\label{eq:Convergence_Theorem_7}
    \xi_{p}\leq \frac{\max\left\{\frac{2}{\eta_2},\xi_{{K_{1,r}}}\right\}}{p+1-{K_{1,r}}}.%\leq \frac{1}{\eta_2\cdot (p-K_1)}\cdot \begin{cases} \max\{2,\eta_1\},& \eta_1\in(0,1)\\
    %\max\{2,(\varphi(\bx^0)-\bar{\varphi})\eta_2\},& \eta_1=\infty\end{cases}.
\end{equation} 
{Since %we assumed $\frac{\eta_1}{\eta_2}\geq \frac{{\epsilon_r}}{4}$, %if 
$\eta_1\in(0,1)$, we obtain 
$\xi_{K_{1,r}}\leq \frac{\eta_1}{\eta_2}\leq \frac{2}{\eta_2}$}, and \eqref{eq:Convergence_Theorem_7} implies that the smallest {$K_{2,r}$} such that {$\xi_{K_{1,r}+K_{2,r}}\leq\frac{{\epsilon_r}}{4}$} satisfies
\begin{equation}\label{eq:Convergence_Theorem_7a}{K_{2,{r}}\leq \left\lceil\frac{4}{\epsilon_r}\cdot\frac{2}{\eta_2}=\frac{8}{{\epsilon_r}\eta_2}\right\rceil}.
\end{equation}
{Finally, we use the fact that for every $r>0$  
 \begin{align}\label{eq:connection_between_r}
    \hspace{-10pt}\hat{\varphi}^{\alpha_{r}}(\bx^{r},\bz^{r})\hspace{-1pt}-\hspace{-1pt}\bar{\varphi}_{r+1}\hspace{-1pt}-\hspace{-1pt}\frac{\epsilon_{r+1}}{4}\hspace{-2pt}=\hspace{-1pt}d^{N_r,M_{N_r}-1}\hspace{-1pt}+\hspace{-2pt}\bar{\varphi}_{r}\hspace{-1pt}-\hspace{-1pt}\bar{\varphi}_{r+1}\hspace{-1pt}+\hspace{-1pt}\frac{\epsilon_r\hspace{-1pt}-\hspace{-1pt}\epsilon_{r+1}}{4}\hspace{-1pt}\leq \hspace{-1pt}\frac{9\epsilon_{r+1}}{4}.  \hspace{-3pt}
   %  d^{r+1,1,0}&= d^{r,N_r-1,M_{r,k}-1}+\bar{\varphi}^r+\frac{\epsilon_r}{4}-\bar{\varphi}^{r+1}-\frac{\epsilon_{r+1}}{4}\\
    % &\leq d^{r,N_r-1,M_{r,k}-1}+\frac{5\epsilon_r}{8}\leq\frac{9\epsilon_r}{8}.
\end{align}
where the equality follows from $\alpha_r=\tilde\alpha_{N_{r}-1}$, $(\bx^r,\bz^r)=\by^{N_r,M_{N_r}-1}$, and $2\epsilon_{r+1}=\epsilon_r$, and the inequality follows from $\bar{\varphi}_{r+1}\geq \varphi^*$, $\bar{\varphi}_{r}-\varphi^*\leq \frac{\epsilon_r}{2}$, and $d^{N_r,M_{N_r}-1}\leq \frac{\epsilon_{r}}{2}$ (the stopping criteria of ITALEX-FT).
Thus, combining %the two cases 
\eqref{eq:Convergence_Theorem_7a}, %and \eqref{eq:Convergence_Theorem_8}, 
the definition of $\hat{\varphi}^{{\alpha}_r}$, and \eqref{eq:connection_between_r}
we obtain the following bounds. %on $K_{1,r}$ and $K_{2,r}$.
\begin{align*}
    K_{1,r}&\hspace{-1pt}\leq\hspace{-1pt} \begin{cases}\left\lceil\log_{\frac{1}{1-\eta_1}}\hspace{-3pt}\left(\min\hspace{-2pt}\left\{\hspace{-1pt}\frac{\eta_2}{\eta_1},\frac{4}{\epsilon_1}\hspace{-2pt}\right\}\left(\varphi(\bx^{0})\hspace{-1pt}+\hspace{-1pt}\norm{\bx^0-\bz^0}^2\hspace{-1pt}-\hspace{-1pt}\bar{\varphi}_{1}\right)\right)\right\rceil
    %\left({\varphi}(\bx^{r-1})+\norm{\bx^{r-1}-\bz^{r-1}}^2-\bar{\varphi_r}\right) \right) 
    \hspace{-2pt},&\hspace{-6pt}r=1,\\[0.2em] 
    \left\lceil\log_{\frac{1}{1-\eta_1}}\hspace{-3pt}\left(9\right)\right\rceil, & \hspace{-6pt}r>1,\\
    %0,& \hspace{-6pt}\eta_1=\infty, 
    \end{cases}\\
    K_{2,r}\hspace{-1pt}&\leq \hspace{-1pt}
    \begin{cases}0 ,\qquad\qquad\qquad\qquad\qquad\qquad\qquad\qquad\qquad\qquad\qquad&\quad\;\eta_1\in\left(0,\frac{\eta_2\epsilon_r}{4}\right],\\
    \left\lceil\frac{8}{\eta_2\epsilon_r}\right\rceil ,& \quad\;\eta_1\in(\frac{\eta_2\epsilon_r}{4},1).\\
    %\left\lceil\frac{\max\left\{8,4\cdot\left(\varphi(\bx^{0})+\norm{\bx^0-\bz^0}^2-\bar{\varphi}_{1}\right)\eta_2\right\}}{\eta_2\epsilon_1}\right\rceil,& \qquad\qquad\qquad\;\; \eta_1=\infty, r=1,\\
    %\left\lceil\max\left\{\frac{8}{\eta_2\epsilon_r},9\right\}\right\rceil,& \qquad\qquad\qquad\;\; \eta_1=\infty, r>1.
    \end{cases}
\end{align*}}
%We will now sum up the number of iterations over the run of Agorithm~\ref{Alg:ITALEX_epsilon_change}. Note that for any $r\in\mathbb{N}$
{
Recall that 
\begin{equation}\label{eq:barM}\bar{M}=\sum_{r=1}^R\bar{M}_r=\sum_{r=1}^R (K_{1,r}+K_{2,r}+N_r)=\sum_{r=1}^R K_{1,r}+\sum_{r=1}^RK_{2,r}+N,\end{equation}
where $N$ is given by Theorem~\ref{thm:general_convergence}\ref{thm:ITALEX-CT_convergence}. Thus, it is left to compute a bound on the sum of $K_{1,r}$ and $K_{2,r}$ over $r$. 
We define $K_1$ and $K_2$ as follows:
\begin{align*}
K_{1,1}\hspace{-1pt}-\hspace{-1pt}1&\hspace{-1pt}\leq \hspace{-1pt} \log_{\frac{1}{1-\eta_1}}\hspace{-3pt}
\left(\hspace{-2pt}\min\hspace{-2pt}\left\{\hspace{-1pt}\frac{\eta_2}{\eta_1},\frac{4}{\epsilon_1}\hspace{-2pt}\right\}
\hspace{-2pt}\left(\varphi(\bx^{0})\hspace{-1pt}+\hspace{-1pt}\norm{\bx^0-\bz^0}^2\hspace{-3pt}-\hspace{-1pt}\bar{\varphi}_{1}\hspace{-1pt}\right)
\right)
\hspace{-2pt}\equiv K_1\\
%\log_{\frac{1}{1-\eta_1}}\left(\min\left\{\frac{\eta_2}{\eta_1},\frac{4}{\epsilon_1}\right\}\left(\varphi(\bx^0)+\norm{\bz^0-\bx^0}^2-\bar{\varphi}_1\right)\right)+1\nonumber\\
%\end{align*}
%\begin{align*}
\sum_{r=2}^R K_{1,r}\hspace{-1pt} + \hspace{-1pt}\sum_{r=1}^R K_{2,r}\hspace{-1pt}+\hspace{-1pt}1&\hspace{-1pt}\leq \hspace{-1pt} R(\log_{\frac{1}{1-\eta_1}}\hspace{-3pt}\left(9\right)+1)+\sum_{r=1}^R\left(\frac{8}{\eta_2\epsilon_r}+1\right)\\
&\leq R(\log_{\frac{1}{1-\eta_1}}\hspace{-3pt}\left(9\right)+2)+\sum_{r=1}^R\frac{8}{\epsilon_R2^{R-r}\eta_2}\\
&\leq \left(\left\lceil\log_2\left(\frac{\epsilon_1}{\varepsilon}\right)\right\rceil+1\right) (\log_{\frac{1}{1-\eta_1}}\hspace{-3pt}\left(9\right)+2)+\frac{32}{\varepsilon \eta_2}\equiv K_2.
%\sum_{r=2}^R\left( \log_{\frac{1}{1-\eta_1}}\left(\min\left\{\frac{\eta_2}{\eta_1},\frac{4}{\epsilon_r}\right\}\frac{9}{8}\epsilon_{r-1}\right)+1\right)\leq R(\log_{\frac{1}{1-\eta_1}}\left(9\right)+1)\\
%&\leq (\log_{\frac{1}{1-\eta_1}}\left(9\right)+1)
% \log_2\left(\left\lceil\frac{\epsilon_1}{\varepsilon}+1\right\rceil+1\right)\nonumber
%\left\lceil\log_2\left(\frac{\epsilon_1}{\varepsilon}\right)\right\rceil\\
%\sum_{r=1}^R K_{2,r} &\leq \sum_{r=1}^R\left(\frac{8}{\eta_2\epsilon_r}+1\right)=\sum_{r=1}^R\frac{8}{\eta_2\epsilon_R2^{R-r}}+R\leq \frac{16}{\eta_2\epsilon_R}+R\leq\\&\frac{32}{\eta_2\varepsilon}+\left\lceil\log_2\left(\frac{\epsilon_1}{\varepsilon}\right)\right\rceil
% \log_2\left(\left\lceil\frac{\epsilon_0}{\varepsilon}\right\rceil+1\right)
%\nonumber
\end{align*}
%For the case of $\eta_1=\infty$, we have that $\sum_{r=1}^R K_{1,r}=0$ and
%\begin{align*}
% K_{2,1} -1 &=\frac{\max\left\{8,4(\varphi(\bx^0)+\norm{\bz^0-\bx^0}^2-\bar{\varphi}_1)\eta_2\right\}}{\eta_2\epsilon_1}\equiv J_1\\
%\sum_{r=2}^R K_{2,r}+1 &\leq \sum_{r=2}^R\left(\max\left\{\frac{8}{\eta_2\epsilon_r},9\right\}+1\right)
%\leq \sum_{r=2}^R\frac{8}{\eta_2\epsilon_R2^{R-r}}+10R\\
%&\leq\frac{32}{\eta_2\varepsilon}+10\left(\left\lceil\log_2\left(\frac{\epsilon_1}{\varepsilon}\right)\right\rceil+1\right)\equiv J_2.
% \log_2\left(\left\lceil\frac{\epsilon_0}{\varepsilon}\right\rceil+1\right)
%\nonumber
%\end{align*}
where the last inequality follows from plugging the bound on $R$ from Theorem~\ref{thm:general_convergence}\ref{thm:ITALEX-CT_convergence} and the fact $\epsilon_R<\varepsilon\leq \epsilon_{R-1}=2\epsilon_R$. Plugging the bounds of $K_1$, $K_2$ into \eqref{eq:barM} obtains the desired result.
}
%We obtain the desired result by adding $N$ to the sum and denoting denoting \begin{align*}
 %   &J_1=\begin{cases}
  %  K_{1,1},\quad\qquad \eta\in(0,1)
 %   \\K_{2,1},\quad\qquad\eta =\infty
 %   \end{cases}, \;J_2=
 %   \sum_{r=1}^R K_{1,r}+\sum_{r=1}^R K_{2,r}-J_1
%\end{align*}.}\vskip-10pt
\end{proof}

\subsubsection{Implementation using first-order methods}
In the two previous sections, we presented a general approximation oracle that utilizes a sufficient descent iterative algorithm $\mathcal{A}$ and proved the iteration complexity of using such an oracle within ITALEX-CT.
We now discuss two specific first-order algorithms that can be used as $\mathcal{A}$: The GCG and the PG.

\paragraph{\bf Implementation via GCG.\\}
In order to prove that GCG can be used in the approximation oracle, we need to define its associated continuous optimality measure and show that GCG satisfies Assumption~\ref{assum:sufficient_decrease} with respect to this measure. Throughout this section we also assume that $\dom(g)$ is compact.

We start by defining the optimality measure $S^{\alpha}$ associated with GCG for solving problem~\eqref{def:h}, by adapting definitions and the lemmas in Section~\ref{section:preliminaries} from problem~\eqref{prob:P} to problem~\eqref{def:h}.
Thus, for any $\alpha\geq \ubar{\omega}$, we define
\begin{equation}\label{def:D_alpha}D_\alpha\geq \mathcal{D}_{\dom({g})}+\mathcal{D}_{\Lev_{\omega}(\alpha)}\geq  \mathcal{D}_{\dom(\hat{g}^\alpha)},\end{equation} an upper bound on the sum of the diameter of $\dom(g)$ and the diameter of the $\alpha$-Level set of $\omega$, which is finite under assumption \ref{assumption:omega}\ref{ass:omega_compact}.
Using these notations we adapt Definition \ref{S_def} to function $\hat{\varphi}^\alpha$.
The $\alpha$-surrogate optimality gap is denoted as
\begin{equation*}
    S^\alpha(\by)\equiv\left\langle \nabla \hat{f}(\by), \by-\mathbf{p}^\alpha(\by)\right\rangle +\hat{g}^{\alpha}(\by)-\hat{g}^\alpha(\mathbf{p}^\alpha (\bx)) , 
\end{equation*}
where
\begin{align*}
    \mathbf{p}^\alpha (\by)&\in \argmin_{\mathbf{p}\in \Real^n\times\Real^n }\left\{\langle \nabla \hat{f} (\by), \mathbf{p}\rangle  +\hat{g}^\alpha(\mathbf{p}) \right\}.
\end{align*}
Note that $\mathbf{p}^\alpha (\by)=(\bp^\alpha (\by)_1,\mathbf{p}^\alpha (\by)_2)$ such that
\begin{align*}
    \bp^\alpha (\by)_1&\in \argmin_{\bp_1\in\Real^n}\left\{\langle \nabla f(\by_1)+2(\by_1-\by_2),\bp_1\rangle+g(\bp_1)\right\},\\
    \bp^\alpha (\by)_2&\in \argmin_{\bp_2\in\Lev_{\omega}(\alpha)}\left\{\langle 2(\by_2-\by_1),\bp_2\rangle\right\}.
\end{align*}
Thus, the GCG can be applied to function $\hat{\varphi}^\alpha$ if it can be applied to $\varphi$ and there exists a simple linear oracle over $\Lev_{\omega}(\alpha)$.
Using these definitions, we obtain the following corollary of Lemma~\ref{S_lemmas} and Lemma~\ref{S_lemmas_sufficient_decrease} from Section~\ref{section:preliminaries}. \begin{corollary}\label{S_alpha_Optimality_measure}
 $S^\alpha(\cdot)$ is a continuous optimality measure for problem~\eqref{def:h} associated with GCG {with adaptive stepsize}.
Moreover, GCG satisfies Assumption~\ref{assum:sufficient_decrease} with respect to $S^\alpha(\cdot)$, \ie\; for any $\alpha\geq\ubar{\omega}$ and for any $\by^0\in\dom(\hat{g}^\alpha)\equiv \dom(g)\times\Lev_{\omega}(\alpha)$ the $j$th iteration of GCG satisfies
 \begin{equation}\label{eq:suff_decr}
        \hat{\varphi}^\alpha(\by^{j})-\hat{\varphi}^\alpha(\by^{j+1})\geq \frac{1}{2}\min\left\{S^{\alpha}(\by^{j}),\frac{S^{\alpha}(\by^{j})^2}{(L_f+2) D_{\alpha}^2}\right\},
    \end{equation}
 \end{corollary}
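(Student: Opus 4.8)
The plan is to recognize that $\hat{\varphi}^\alpha = \hat{f} + \hat{g}^\alpha$ is itself a classical composite function, so that $S^\alpha$ is literally the surrogate optimality gap of Definition~\ref{S_def} instantiated at $\hat{\varphi}^\alpha$, and the entire corollary follows by transcribing Lemma~\ref{S_lemmas} and Lemma~\ref{S_lemmas_sufficient_decrease}. The excerpt already records that $\hat{f}$ is convex and continuously differentiable with an $(L_f+2)$-Lipschitz gradient and that $\hat{g}^\alpha$ is closed and convex. Under the standing assumption of this paragraph that $\dom(g)$ is compact, together with Assumption~\ref{assumption:omega}\ref{ass:omega_compact}, the set $\dom(\hat{g}^\alpha) = \dom(g)\times\Lev_{\omega}(\alpha)$ is compact, so the hypotheses of Definition~\ref{S_def}, Lemma~\ref{S_lemmas}, and Lemma~\ref{S_lemmas_sufficient_decrease} all hold for $\hat{\varphi}^\alpha$.

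To verify property (i) of Definition~\ref{def:cont_opt_measure}, I would apply Lemma~\ref{S_lemmas} to $\hat{\varphi}^\alpha$, obtaining $S^\alpha(\by) \geq \hat{\varphi}^\alpha(\by) - \min_{\bp}\hat{\varphi}^\alpha(\bp)$, and then identify $\min_{\bp}\hat{\varphi}^\alpha(\bp)$ with $h(\alpha)$. This is the one genuinely new step: minimizing $\hat{\varphi}^\alpha(\by) = \varphi(\by_1) + \norm{\by_1-\by_2}^2 + \delta_{\Lev_{\omega}(\alpha)}(\by_2)$ first over $\by_2\in\Lev_{\omega}(\alpha)$ yields $\varphi(\by_1) + \dist(\by_1,\Lev_{\omega}(\alpha))^2$, with the inner minimum attained uniquely at $\by_2 = \proj_{\Lev_{\omega}(\alpha)}(\by_1)$ since $\Lev_{\omega}(\alpha)$ is closed and convex; minimizing what remains over $\by_1$ reproduces exactly problem~\eqref{def:h}. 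Hence $\min\hat{\varphi}^\alpha = h(\alpha)$, which gives (i), and the same partial-minimization argument shows that $\by$ minimizes $\hat{\varphi}^\alpha$ if and only if $\by_1$ solves~\eqref{def:h} and $\by_2 = \proj_{\Lev_{\omega}(\alpha)}(\by_1)$. Combined with the ``$S^\alpha(\by)=0$ iff $\by$ is a minimizer'' clause of Lemma~\ref{S_lemmas}, this establishes property (ii). The remaining requirement, continuity of $S^\alpha$ on the compact domain, I would obtain from continuity of $\nabla\hat{f}$ and a maximum-theorem argument for the inner maximization defining $\mathbf{p}^\alpha$.

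For the sufficient-decrease claim, I would invoke Lemma~\ref{S_lemmas_sufficient_decrease} directly for GCG with adaptive stepsize run on $\hat{\varphi}^\alpha$, which produces
\[
\hat{\varphi}^\alpha(\by^{j}) - \hat{\varphi}^\alpha(\by^{j+1}) \geq \frac{1}{2}\min\left\{S^\alpha(\by^{j}),\, \frac{S^\alpha(\by^{j})^2}{(L_f+2)\,\mathcal{D}_{\dom(\hat{g}^\alpha)}^2}\right\}.
\]
I would then replace $\mathcal{D}_{\dom(\hat{g}^\alpha)}$ by its upper bound $D_\alpha$ from \eqref{def:D_alpha} (the diameter of the product set $\dom(g)\times\Lev_{\omega}(\alpha)$ being bounded by $\mathcal{D}_{\dom(g)} + \mathcal{D}_{\Lev_{\omega}(\alpha)}$), which only weakens the right-hand side and delivers exactly \eqref{eq:suff_decr}. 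This is precisely Assumption~\ref{assum:sufficient_decrease} with $\eta_1 = \tfrac12\in(0,1)$ and $\eta_2 = \tfrac{1}{2(L_f+2)D_\alpha^2}\in(0,+\infty)$.

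I expect essentially no hard estimation: the whole content is the reduction to the composite setting, and the only step requiring care is the identification $\min\hat{\varphi}^\alpha = h(\alpha)$ together with the characterization of minimizers via the partial minimization over $\by_2$, since that is what bridges the abstract optimality-measure definition to the concrete level-set problem~\eqref{def:h}. Continuity of $S^\alpha$ is the only mildly technical loose end, resolved by the compactness of $\dom(\hat{g}^\alpha)$.
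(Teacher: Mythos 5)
Your proposal is correct and follows essentially the same route as the paper's proof: treat $\hat{\varphi}^\alpha=\hat{f}+\hat{g}^\alpha$ as a classical composite function with compact $\dom(\hat{g}^\alpha)$, apply Lemma~\ref{S_lemmas} (and the zero-iff-minimizer characterization) to get the optimality-measure properties, and apply Lemma~\ref{S_lemmas_sufficient_decrease} with $\mathcal{D}_{\dom(\hat{g}^\alpha)}\leq D_\alpha$ for the sufficient decrease. You are slightly more explicit than the paper in spelling out the partial-minimization identification $\min\hat{\varphi}^\alpha=h(\alpha)$; the only item the paper records that you omit is the citation guaranteeing $S^\alpha(\by^j)\to 0$ along GCG iterates, which in any case also follows from \eqref{eq:suff_decr} and boundedness below.
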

\begin{proof}
 Let $\alpha\geq \ubar{\omega}$,
 for any $\bu\in \dom(\hat{g}^\alpha)=\dom(g)\times\Lev_{\omega}(\alpha)$, which is compact due to  Assumption~\ref{assumption:omega}\ref{ass:omega_compact}. Therefore, by Lemma~\ref{S_lemmas} we obtain
 \begin{equation*}
     S^\alpha(\bu)\geq \hat{\varphi}^\alpha(\bu)-h(\alpha).
 \end{equation*}
Moreover, $S^\alpha(\bu)=0$ if and only if $\bu\in\argmin_{\by\in \Real^n\times\Real^n}\{\hat{\varphi}^\alpha(\by)\}$ by \cite[Theorem 13.6]{FO_Book}. Thus, $S^\alpha(\cdot)$ is a continuous optimality measure.
Furthermore, by \cite[Theorem 13.9]{FO_Book}, applying GCG to problem~\eqref{def:h} guarantees that $$\lim_{j\rightarrow \infty}S^\alpha(\by^j)=0.$$
Finally, since $D_\alpha$ is greater or equal to the diameter of $\dom(\hat{g}^\alpha)$, by Lemma~\ref{S_lemmas_sufficient_decrease} we obtain \eqref{eq:suff_decr}.
\end{proof}
Thus, we can utilize the GCG as our algorithm $\mathcal{A}$, with iterations defined in Algorithm \ref{Alg:GCG_Oracle}. A straight forward result of Theorem~\ref{theorem:Convergence} for the specific implementation of Algorithm~\ref{Alg:GCG_Oracle} as ($\mathcal{A}-STEP$) in the approximation oracle follows.

\begin{figure}[ht]
\centering
\begin{minipage}{0.9\textwidth}
\begin{algorithm}[H]
\SetAlgoLined
% \textbf{Input:} \(\by^j,\alpha\)\\
\textbf{Input:} $\by^j\in\dom(g)\times\Lev_\omega(\alpha)$, $\alpha$\\
\(\bp^j=\argmin_{\bu\in\Real^n\times\Real^n}\{\langle \nabla \hat{f}(\by^j),\bu\rangle + \hat{g}^\alpha(\bu)\}\)\\
\(S^\alpha(\by^j)=\langle \nabla \hat{f}(\by^j),\by^j-\bp^j\rangle+\hat{g}^\alpha(\by^j)-\hat{g}^\alpha(\bp^j)\)\\
\(\eta_j=\min\{1,\frac{S^\alpha(\by^j)}{(L_f+2)\|\by^j-\bp^j\|}\}\)\\
\(\by^{j+1}=\by^j+\eta_j(\bp^j-\by^j)\)\\
\textbf {Output:}  $\by^{j+1},S^\alpha(\by^j)$
\caption{($\mathcal{A}$-STEP):Generalized Conditional Gradient Step}\label{Alg:GCG_Oracle}
\end{algorithm}
\end{minipage}
\end{figure}

\begin{corollary}\label{cor:convergence_GCG}
Let Assumptions~\ref{assumption:omega} and \ref{assumption:phi} hold, and $\dom(g)$ be compact. 
Let the approximation oracle $\mathcal{O}^{\varphi,\omega}$ be Algorithm~\ref{alg:approx_alg} using Algorithm~\ref{Alg:GCG_Oracle} as an iteration of $\mathcal{A}$, and let the expansion oracle $\mathcal{E}^{\varphi,\omega}$ be the one defined in Theorem~\ref{thm:Expansion}.
Then, {for any $\varepsilon>0$, ITALEX-CT} 
%Then, ITALEX with the approximation oracle presented in Algorithm~\ref{alg:approx_alg} using Algorithm~\ref{Alg:GCG_Oracle} as an iteration of $\mathcal{A}$, and the expansion oracle defined in Theorem~\ref{thm:Expansion}
requires at most $K_1+K_2+N$ iterations of GCG, where
{
\begin{align*}
     K_1&=\log_{2}\left(\min\left\{\frac{2}{2 (L_f+2) D_{\omega^*}^2},\frac{4}{\epsilon_1}\right\}\left(\varphi(\bx^0)+\norm{\bz^0-\bx^0}^2-\bar{\varphi}_1\right)\right), \\
     K_2&=\frac{64 (L_f+2) D_{\omega^*}^2}{\varepsilon}+\left(\log_{2}\left(9\right)+2\right)\left(\left\lceil\log_2\left(\frac{\epsilon_1}{\varepsilon}\right)\right\rceil+1\right),
     \\ N&=\left\lceil \frac{2^\kappa\gamma(\omega^*-\omega(\bz^0))}{\varepsilon^{\sfrac{\kappa}{2}}}\right\rceil+\left\lceil\log_2\left(\frac{\epsilon_1}{\varepsilon}\right)\right\rceil+1.
\end{align*}}
\end{corollary}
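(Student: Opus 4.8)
The plan is to obtain this result as a direct specialization of Theorem~\ref{theorem:Convergence}, substituting the concrete GCG sufficient-decrease constants from Corollary~\ref{S_alpha_Optimality_measure}, and the concrete expansion function $\Delta$ from Theorem~\ref{thm:Expansion}. The key observation is that all three quantities $K_1$, $K_2$, and $N$ in the statement are instances of the generic bounds already proved, so the work is entirely a matter of identifying the right parameter values and checking that the hypotheses of the earlier results hold.

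First I would verify the hypotheses. Assumptions~\ref{assumption:omega} and~\ref{assumption:phi} together with the compactness of $\dom(g)$ are exactly what Corollary~\ref{S_alpha_Optimality_measure} requires, so GCG (with adaptive stepsize, as implemented in Algorithm~\ref{Alg:GCG_Oracle}) is a legitimate choice of the sub-algorithm $\mathcal{A}$: it admits the continuous optimality measure $S^\alpha(\cdot)$ and satisfies the sufficient-decrease inequality~\eqref{eq:suff_decr}. Matching~\eqref{eq:suff_decr} against the generic Assumption~\ref{assum:sufficient_decrease} inequality~\eqref{eq:Sufficient_Decrease} identifies the constants
\begin{equation*}
\eta_1=\tfrac{1}{2},\qquad \eta_2=\frac{1}{2(L_f+2)D_{\omega^*}^2},
\end{equation*}
where I would take $D_\alpha = D_{\omega^*}$ as a uniform diameter bound valid for all level sets visited, which is legitimate since every $\tilde\alpha_{k-1}$ produced by ITALEX satisfies $\tilde\alpha_{k-1}\le\omega^*$ and the level sets are nested, so $\mathcal{D}_{\Lev_\omega(\tilde\alpha_{k-1})}\le\mathcal{D}_{\Lev_\omega(\omega^*)}$, giving $D_\alpha\le D_{\omega^*}$ via~\eqref{def:D_alpha}.

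Next I would substitute these two values into the expressions for $K_1$ and $K_2$ in Theorem~\ref{theorem:Convergence}. With $\eta_1=\tfrac12$ we have $\tfrac{1}{1-\eta_1}=2$, so every $\log_{\frac{1}{1-\eta_1}}$ becomes $\log_2$; with $\eta_2=\tfrac{1}{2(L_f+2)D_{\omega^*}^2}$ the ratio $\tfrac{\eta_2}{\eta_1}$ becomes $\tfrac{1}{2(L_f+2)D_{\omega^*}^2}\cdot 2 = \tfrac{2}{2(L_f+2)D_{\omega^*}^2}$ (matching the stated first argument of the minimum in $K_1$), the term $\tfrac{32}{\eta_2\varepsilon}$ becomes $\tfrac{64(L_f+2)D_{\omega^*}^2}{\varepsilon}$, and the additive $R$-dependent term in $K_2$ carries over verbatim with $\log_2(9)$ replacing $\log_{\frac{1}{1-\eta_1}}(9)$. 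The bound on $N$ is obtained by plugging $\Delta(\rho)=\rho^{\kappa/2}/\gamma$ from Theorem~\ref{thm:Expansion} into the $N$-bound of Theorem~\ref{thm:general_convergence}\ref{thm:ITALEX-CT_convergence}, which is exactly the computation already performed in Corollary~\ref{cor:ITALEX_convergence_specific} (with the ceiling added because $N$ counts iterations).

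This is essentially a bookkeeping proof, and I do not anticipate a genuine mathematical obstacle; the only point requiring care is the uniform-diameter argument justifying the replacement of the iteration-dependent $D_\alpha$ by the single constant $D_{\omega^*}$, since the generic Theorem~\ref{theorem:Convergence} is stated with an abstract $\eta_2$ rather than an $\alpha$-dependent one. I would handle this by noting, as above, that monotonicity of $\tilde\alpha_{k-1}$ together with the nesting of sublevel sets makes $D_{\omega^*}$ a valid uniform upper bound on all relevant diameters, so a single $\eta_2$ works across all iterations and all outer loops $r$. The remaining steps are purely substitution, and I would close by remarking that the resulting total $K_1+K_2+N$ is $O(1/\varepsilon)$ when $\kappa\le 2$ (and in particular $O(1/\varepsilon)$ in the norm-like case $\kappa=1$, up to the logarithmic $R$-term), recovering the claimed feasibility rate.
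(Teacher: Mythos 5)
Your proposal is correct and follows essentially the same route as the paper's proof: identify $\eta_1=\tfrac12$ and $\eta_2=\tfrac{1}{2(L_f+2)D_{\omega^*}^2}$ from Corollary~\ref{S_alpha_Optimality_measure} via the uniform bound $D_{\alpha_k}\leq D_{\omega^*}$, then plug these together with Corollary~\ref{cor:ITALEX_convergence_specific} into Theorem~\ref{theorem:Convergence}. The paper states this in two sentences; your version merely spells out the same substitutions in more detail.
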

\begin{proof}
Since at every call to the approximation oracle $\alpha_k\leq\omega^*$, $D_{\alpha_k}\leq D_{\omega^*}$ and it follows from Corollary~\ref{S_alpha_Optimality_measure} that Assumption~\ref{assum:sufficient_decrease} holds with \begin{equation}\label{eq:cor_GCG1}
\eta_1=\frac{1}{2},\quad \eta_2=\frac{1}{2 (L_f+2) D_{\omega^*}^2}.
\end{equation}
Plugging \eqref{eq:cor_GCG1} and Corollary~\ref{cor:ITALEX_convergence_specific} in Theorem~\ref{theorem:Convergence} gives the desired result.
\end{proof}
\begin{remark}
Algorithm~\ref{Alg:GCG_Oracle} can be replaced by a variation of GCG with backtracking \cite{pedregosa2020linearly}. This algorithm does not require knowledge of a global Lipschitz constant $L_f$ and yields similar convergence guarantees.
\end{remark}
 
\paragraph{\bf Implementation via PG.\\}
Similarly to the case of GCG,  in order to use PG in the approximation oracle, we need to define its associated continuous optimality measure and show that PG satisfies Assumption~\ref{assum:sufficient_decrease} with respect to this measure. Moreover, we require that this continuous optimality measure would be obtained directly from the PG iterations with no additional computational cost.

Therefore, we first need to define the basic iteration of the PG algorithm for problem~\eqref{def:h}. For any $\alpha\geq\ubar{\omega}$, we define the $\alpha$-proximal gradient mapping with regard to function $\hat{\varphi}^\alpha$ at point $\by$
as:
\begin{align*}
    T^\alpha(\by)&\equiv \prox_{\frac{1}{L_f+2}\hat{g}^\alpha}\left(\by-\frac{1}{L_f+2}\nabla \hat{f}(\by)\right).
    %&=\argmin_{\bu\in  \Real^n\times\Real^n}\left\{\hat{g}^\alpha(\bu)+\frac{1}{2}\|(L_f+2)(\mathbf{u}-\by)+\nabla \hat{f}(\by)\|^2\right\}.
\end{align*}
Moreover, it is evident that $T^\alpha(\by)=(T^\alpha(\by)_1,T^\alpha(\by)_2)$ where
\begin{align*}
T^\alpha(\by)_1&=\prox_{\frac{1}{L_f+2}{g}}\left(\by_1-\frac{1}{L_f+2}\left(\nabla {f}(\by_1)+2(\by_1-\by_2)\right)\right),\\
T^\alpha(\by)_2&=\proj_{\Lev_{\omega}(\alpha)}\left(\frac{L_f\by_2+2\by_1}{L_f+2}\right),
\end{align*}
and thus can easily be computed if $g$ is a prox-friendly function and the projection onto $\Lev_{\omega}(\alpha)$ is simple.

Since computing the optimality measure $S^\alpha(\by)$ is the same as computing the GCG step, we need an alternative optimality measure based on $T^\alpha$. 
To do this, we first show that for any $\by\in \dom(g)\times\Lev_{\omega}(\alpha)$ the diameter of $\Lev_{\hat{\varphi}^\alpha}(\hat{\varphi}^\alpha(\by))$ is bounded.
 \begin{lemma}\label{lemma:D_alpha}
 Let Assumptions~\ref{assumption:omega} and \ref{assumption:phi} hold. Let {$\bar{\varphi}\leq \varphi^*+\varepsilon/2$, let} $\alpha\in[\ubar{\omega}, \omega^*]$, and let $\by\in \dom(g)\times\Lev_{\omega}(\alpha)$. Then
 $$\mathcal{D}_{\Lev_{\hat{\varphi}^\alpha}(\hat{\varphi}^\alpha(\by))}\leq \tilde{D}_\alpha(\by)\equiv \min\left\{D_\alpha,\sqrt{6\left(\hat{\varphi}^\alpha(\by)-\bar{\varphi}+\frac{\varepsilon}{2}\right)+4\mathcal{D}_{\Lev_{\omega}(\alpha)}^2}\right\},$$
where $D_\alpha$ is defined in ~\eqref{def:D_alpha}.
 \end{lemma}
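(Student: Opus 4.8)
The plan is to establish the two bounds appearing in the minimum separately: the bound by $D_\alpha$ is essentially immediate from the domain of $\hat{\varphi}^\alpha$, while the second bound comes from a short geometric decomposition followed by an AM--GM estimate.

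First I would note that any $\bw$ belonging to the sublevel set $\Lev_{\hat{\varphi}^\alpha}(\hat{\varphi}^\alpha(\by))$ must have a finite value $\hat{\varphi}^\alpha(\bw)<\infty$, and hence $\bw\in\dom(\hat{g}^\alpha)=\dom(g)\times\Lev_{\omega}(\alpha)$ (finiteness of $\varphi(\bw_1)$ forces $\bw_1\in\dom(g)$, and finiteness of the indicator forces $\bw_2\in\Lev_{\omega}(\alpha)$). Therefore the whole sublevel set sits inside $\dom(\hat{g}^\alpha)$, so its diameter is at most $\mathcal{D}_{\dom(\hat{g}^\alpha)}\leq D_\alpha$ by the definition of $D_\alpha$ in \eqref{def:D_alpha}. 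This yields the first term in the minimum.

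For the refined bound I would fix two points $\bu=(\bu_1,\bu_2)$ and $\bv=(\bv_1,\bv_2)$ in the sublevel set and split the squared distance as $\norm{\bu-\bv}^2=\norm{\bu_1-\bv_1}^2+\norm{\bu_2-\bv_2}^2$. Since $\bu_2,\bv_2\in\Lev_{\omega}(\alpha)$, the second block is controlled directly by $\norm{\bu_2-\bv_2}\leq \mathcal{D}_{\Lev_{\omega}(\alpha)}$. The crucial observation is the control of the first block: from $\hat{\varphi}^\alpha(\bu)=\varphi(\bu_1)+\norm{\bu_1-\bu_2}^2$ and $\varphi(\bu_1)\geq \varphi^*$, together with $\hat{\varphi}^\alpha(\bu)\leq \hat{\varphi}^\alpha(\by)$ and the hypothesis $\bar{\varphi}\leq \varphi^*+\varepsilon/2$, I obtain
$$\norm{\bu_1-\bu_2}^2\leq \hat{\varphi}^\alpha(\by)-\varphi^*\leq \hat{\varphi}^\alpha(\by)-\bar{\varphi}+\tfrac{\varepsilon}{2}=:A,$$
and the identical bound for $\norm{\bv_1-\bv_2}$. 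Passing through the second coordinates by the triangle inequality then gives $\norm{\bu_1-\bv_1}\leq \norm{\bu_1-\bu_2}+\norm{\bu_2-\bv_2}+\norm{\bv_2-\bv_1}\leq 2\sqrt{A}+\mathcal{D}_{\Lev_{\omega}(\alpha)}$.

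The final step is to combine these into $\norm{\bu-\bv}^2\leq \bigl(2\sqrt{A}+\mathcal{D}_{\Lev_{\omega}(\alpha)}\bigr)^2+\mathcal{D}_{\Lev_{\omega}(\alpha)}^2$ and simplify. Expanding produces a cross term $4\sqrt{A}\,\mathcal{D}_{\Lev_{\omega}(\alpha)}$, and the only (minor) obstacle is to absorb it cleanly; writing $D=\mathcal{D}_{\Lev_{\omega}(\alpha)}$, the elementary inequality $2(\sqrt{A}-D)^2\geq 0$ gives $4\sqrt{A}\,D\leq 2A+2D^2$, whence $\norm{\bu-\bv}^2\leq 6A+4D^2=6\bigl(\hat{\varphi}^\alpha(\by)-\bar{\varphi}+\tfrac{\varepsilon}{2}\bigr)+4\mathcal{D}_{\Lev_{\omega}(\alpha)}^2$. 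Since $\bu$ and $\bv$ were arbitrary points of the sublevel set, taking the supremum over them produces the second term in the minimum, and combining it with the first bound completes the proof.
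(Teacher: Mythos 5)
Your proposal is correct and follows essentially the same route as the paper: both split $\norm{\bu-\bv}^2$ into the two coordinate blocks, bound $\norm{\bu_2-\bv_2}$ by $\mathcal{D}_{\Lev_\omega(\alpha)}$, control $\norm{\bu_1-\bu_2}^2$ and $\norm{\bv_1-\bv_2}^2$ by $\hat{\varphi}^\alpha(\by)-\varphi^*$ using $\varphi\geq\varphi^*$, and pass through the second coordinates via the triangle inequality. Your AM--GM absorption of the cross term is just an expanded form of the paper's one-line inequality $(a+b+c)^2\leq 3(a^2+b^2+c^2)$, and both yield the identical constant $6A+4\mathcal{D}_{\Lev_\omega(\alpha)}^2$.
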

 \begin{proof}
 Let $\bu,\bv\in \Lev_{\hat{\varphi}^\alpha}(\hat{\varphi}^\alpha(\by))$. Then, it follows from $\hat{\varphi}^\alpha(\by)<\infty$  that  $\bu_2,\bv_2\in\Lev_{\omega}(\alpha)$ and $\bu_1,\bv_1\in\dom(g)$, implying that 
\begin{equation}\label{eq:diam_u2v2}
\norm{\bu_2-\bv_2}\leq \mathcal{D}_{\Lev_\omega(\alpha)}
\end{equation} and
 $\norm{\bu-\bv}\leq D_\alpha$. Moreover, by the definition of $\hat{\varphi}^\alpha$
 \begin{align}
\norm{\bu_1-\bv_1}^2&\leq   3(\norm{\bu_1-\bu_2}^2+\norm{\bv_1-\bv_2}^2+\norm{\bu_2-\bv_2}^2) \nonumber \\
&\leq 3\left({\hat{\varphi}^\alpha(\bu)-\varphi(\bu_1)}+{\hat{\varphi}^\alpha(\bv)-\varphi(\bv_1)}+\mathcal{D}_{\Lev_{\omega}(\alpha)}^2\right)\nonumber\\
&\leq 6(\hat{\varphi}^\alpha(\by)-{\varphi^*})+3\mathcal{D}_{\Lev_{\omega}(\alpha)}^2,\label{eq:diam_u1v1}
 \end{align}
 where the second inequality follows from the definition of $\hat{\varphi}^\alpha$ and \eqref{eq:diam_u2v2}, and the last inequality follows from $\varphi(\bu_1),\varphi(\bv_1)\geq \varphi^*$.
 Combining the \eqref{eq:diam_u2v2}, \eqref{eq:diam_u1v1}, and $\bar{\varphi}\leq \varphi^*+\varepsilon/2$ achieves the desired result.
 \end{proof}

Using $\tilde{D}_{\alpha}(\by)$ defined above, and defining
\begin{equation*}
    \tilde{S}^\alpha(\by)=2\max\left\{\hat{\varphi}^\alpha(\by)-\hat{\varphi}^\alpha(T^\alpha(\by)),\tilde{D}_\alpha(\by)\sqrt{\frac{L_f+2}{2}(\hat{\varphi}^\alpha(\by)-\hat{\varphi}^\alpha(T^\alpha(\by)))}\right\},
\end{equation*}
we immediately obtain the following implication of Lemma~\ref{corollary:proximal_bound}. 
\begin{corollary}\label{cor:S_tilde_alpha_optimality}
 $\tilde{S}^\alpha(\cdot)$ is a continuous optimality measure associate with PG for problem~\eqref{def:h}.\end{corollary}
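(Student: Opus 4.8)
The plan is to obtain $\tilde{S}^\alpha$ as the direct translation of Lemma~\ref{corollary:proximal_bound} from the composite model \eqref{prob:P} to the composite model \eqref{def:h}. Concretely, I would apply that lemma to $\hat{\varphi}^\alpha = \hat{f} + \hat{g}^\alpha$, whose smooth part $\hat{f}$ has gradient Lipschitz constant $L_f + 2$, whose proximal-gradient map is $T^\alpha$, and whose minimal value equals $h(\alpha)$ by the definition in \eqref{def:h}. Under the dictionary $\varphi \leftrightarrow \hat{\varphi}^\alpha$, $\varphi^* \leftrightarrow h(\alpha)$, $L_k \leftrightarrow L_f + 2$, and a single step $\bx^k \leftrightarrow \by$, $\bx^{k+1} \leftrightarrow T^\alpha(\by)$, the three defining requirements of a continuous optimality measure in Definition~\ref{def:cont_opt_measure} will each follow.

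For the lower bound (Definition~\ref{def:cont_opt_measure}(i)), Lemma~\ref{corollary:proximal_bound} needs a finite bound on $\mathcal{D}_{\Lev_{\hat{\varphi}^\alpha}(\hat{\varphi}^\alpha(\by))}$; this is exactly what Lemma~\ref{lemma:D_alpha} supplies, giving $\mathcal{D}_{\Lev_{\hat{\varphi}^\alpha}(\hat{\varphi}^\alpha(\by))} \leq \tilde{D}_\alpha(\by) < \infty$, the finiteness coming from Assumption~\ref{assumption:omega}\ref{ass:omega_compact}. I would apply the lemma pointwise at $\by$ --- its proof invokes only the single point and its single PG step, so no genuine sequence is required --- to conclude $\hat{\varphi}^\alpha(\by) - h(\alpha) \leq \tilde{S}(\by)$, where $\tilde{S}(\by)$ is the quantity with the true diameter in place of $\tilde{D}_\alpha(\by)$. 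Because the second entry of the maximum defining $\tilde{S}^\alpha$ is nondecreasing in the diameter and $\tilde{D}_\alpha(\by)$ dominates the true diameter, this upgrades to $\tilde{S}^\alpha(\by) \geq \tilde{S}(\by) \geq \hat{\varphi}^\alpha(\by) - h(\alpha)$, which is precisely (i).

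For the zero-characterization (Definition~\ref{def:cont_opt_measure}(ii)), I would first observe that both entries of the maximum are nonnegative, since PG is a descent method and hence $\hat{\varphi}^\alpha(T^\alpha(\by)) \leq \hat{\varphi}^\alpha(\by)$. Thus $\tilde{S}^\alpha(\by) = 0$ iff $\hat{\varphi}^\alpha(\by) = \hat{\varphi}^\alpha(T^\alpha(\by))$, i.e.\ $\by$ is a fixed point of $T^\alpha$, which for the proximal-gradient map is equivalent to $\by$ minimizing $\hat{\varphi}^\alpha$. Combined with part (i), $\tilde{S}^\alpha(\by) = 0$ forces $\hat{\varphi}^\alpha(\by) = h(\alpha)$, so $\by$ solves \eqref{def:h}; and since for fixed $\by_1$ the term $\norm{\by_1 - \by_2}^2 + \delta_{\Lev_\omega(\alpha)}(\by_2)$ is minimized over $\by_2$ exactly at $\proj_{\Lev_\omega(\alpha)}(\by_1)$, optimality forces $\by_2 = \proj_{\Lev_\omega(\alpha)}(\by_1)$, as required. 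Continuity of $\tilde{S}^\alpha$ is then immediate, as it is assembled from the continuous maps $\hat{\varphi}^\alpha$, $T^\alpha$ (prox and projection), and $\tilde{D}_\alpha$, together with $\max(\cdot,\cdot)$ and $\sqrt{\cdot}$ applied to the nonnegative descent term.

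I expect the only delicate point to be the substitution of the true level-set diameter by its upper bound $\tilde{D}_\alpha(\by)$: I must confirm that enlarging the diameter can only enlarge $\tilde{S}^\alpha$, so the inequality in (i) is preserved, while checking that this replacement leaves the zero-characterization in (ii) untouched --- which it does, because at a minimizer the descent term already vanishes and drives the whole expression to zero irrespective of the diameter factor. The remaining steps are routine translations of the already-established Lemmas~\ref{corollary:proximal_bound} and~\ref{lemma:D_alpha}.
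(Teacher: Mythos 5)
Your proposal follows essentially the same route as the paper: apply Lemma~\ref{corollary:proximal_bound} to $\hat{\varphi}^\alpha=\hat f+\hat g^\alpha$ (with Lipschitz constant $L_f+2$ and optimal value $h(\alpha)$), invoke Lemma~\ref{lemma:D_alpha} to justify using the enlarged diameter $\tilde D_\alpha(\by)$, and characterize the vanishing of the descent term $\hat{\varphi}^\alpha(\by)-\hat{\varphi}^\alpha(T^\alpha(\by))$ via optimality of the prox-gradient fixed point. The only ingredient in the paper's proof that you omit is the verification that $\tilde S^\alpha(\by^j)\rightarrow 0$ along the PG iterates (immediate because $\hat{\varphi}^\alpha(\by^j)$ is nonincreasing and convergent), which is what makes the measure \emph{associated with} PG in the sense of \eqref{eq:alg_cond}.
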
 \begin{proof}
Let $\alpha\in\Real$, and let $\by\in \dom(g)\times\Lev_{\omega(\alpha)}$, Then. it follows from Lemma~\ref{corollary:proximal_bound}, that replacing $\varphi$ by $\hat{\varphi}^{\alpha}$ we obtain \begin{equation*}
     \tilde{S}^\alpha(\by)\geq \hat{\varphi}^\alpha(\by)-h(\alpha),
 \end{equation*}
Thus, satisfying the first property of definition \ref{def:cont_opt_measure}. Moreover, it follows from \cite[Corollary 10.8]{FO_Book} that $\hat{\varphi}^\alpha(\by)-\hat{\varphi}^\alpha\left(T^\alpha(\by)\right)\geq 0$ for all $\by$, and is equal to $0$ if and only if $\by$ is optimal.
Thus, $\tilde{S}^\alpha(\by)$ satisfies the second condition of definition \ref{def:cont_opt_measure}. Notice that since the sequence $\varphi(\by^j)$ generated by PG converges, the sequence $\varphi(\by^j)-\varphi\left(T^\alpha(\by^j)\right)$ converges to 0, and therefore so does $\tilde{S}^\alpha(\by^j)$.
\end{proof}
We can now consider the implementation of the $(\mathcal{A}-STEP)$ in the approximation oracle using the PG step presented in Algorithm~\ref{Alg:PG_Oracle}. The following result is a direct implication of applying  Theorem~\ref{theorem:Convergence} to this implementation.
\begin{figure}[ht]
\centering
\begin{minipage}{0.9\textwidth}
 \begin{algorithm}[H]
\SetAlgoLined
% \textbf{Input:} $\by^j=(\by^j_1,\by^j_2):\by^j_1\in\dom(g),\;\by^j_2\in\Lev_\omega(\alpha)$,\\
\textbf{Input:} $\by^j\in\dom(g)\times\Lev_\omega(\alpha)$, $\alpha$\\
\(\by^{j+1}=\prox_{\hat{g}^\alpha}\left(\by^j-\frac{1}{L_{f}+2}\nabla \hat{f}(\by^j)\right)\)\\
Set
$\tilde{D}_\alpha(\by^j)=\min\left\{D_\alpha,\sqrt{6\left(\hat{\varphi}^\alpha(\by^j)-\bar{\varphi}_\alpha+\frac{\varepsilon}{2}\right)+4\mathcal{D}_{\Lev_{\omega}(\alpha)}^2}\right\}$\\
Set $\zeta_j=\hat{\varphi}^\alpha(\by^{j+1})-\hat{\varphi}^\alpha(\by^j)$\\
Calculate \(\tilde{S}^\alpha(\by^j)=2\max\left\{\zeta_j,\sqrt{\frac{L_{f}+2}{2}\tilde{D}_\alpha(\by^j)^2\zeta_j}\right\}\)\\
\textbf{Output:}  $\by^{j+1},\tilde{S}^\alpha(\by^j)$
\caption{($\mathcal{A}$-STEP):Proximal Gradient Step}\label{Alg:PG_Oracle}
\end{algorithm}
\end{minipage}
\end{figure}
\begin{corollary}\label{cor:PG_Convergence_rate}
Let Assumptions~\ref{assumption:omega} and \ref{assumption:phi} hold. 
Let the approximation oracle $\mathcal{O}^{\varphi,\omega}$ be Algorithm~\ref{alg:approx_alg} using Algorithm~\ref{Alg:PG_Oracle} as an iteration of $\mathcal{A}$, and let the expansion oracle $\mathcal{E}^{\varphi,\omega}$ be the one defined in Theorem~\ref{thm:Expansion}.
Then, {for any $\varepsilon>0$, ITALEX-CT}
%Then, ITALEX with the approximation oracle presented in Algorithm~\ref{alg:approx_alg} using Algorithm~\ref{Alg:PG_Oracle} as an iteration of $\mathcal{A}$, and the expansion oracle defined in Theorem~\ref{thm:Expansion}
requires at most $K_1+K_2+N$ iterations of PG, where
{
\begin{align*}
     K_1&=\log_{2}\left(\min\left\{\frac{2}{2 (L_f+2) \hat{D}_{0}^2},\frac{4}{\epsilon_1}\right\}\left(\varphi(\bx^0)+\norm{\bz^0-\bx^0}^2-\bar{\varphi}_1\right)\right), \\
     K_2&=\frac{64 (L_f+2) \hat{D}_{0}^2}{\varepsilon}+\left(\log_{2}\left(9\right)+2\right)\left(\left\lceil\log_2\left(\frac{\epsilon_1}{\varepsilon}\right)\right\rceil+1\right),
     \\ N&=\left\lceil \frac{2^\kappa\gamma(\omega^*-\omega(\bz^0))}{\varepsilon^{\sfrac{\kappa}{2}}}\right\rceil+\left\lceil\log_2\left(\frac{\epsilon_1}{\varepsilon}\right)\right\rceil+1,
\end{align*}
with $\Delta_0=\varphi(\bx^0)+\norm{\bx^0-\bz^0}^2-\varphi^*+\frac{\epsilon_1}{2}$ 
and $\hat{D}_0^2=\min\left\{6\Delta_0+4\mathcal{D}_{\Lev_{\omega}(\omega)^*}^2,D_{\omega^*}^2\right\}$.}
\end{corollary}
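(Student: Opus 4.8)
The plan is to follow the same route as the proof of Corollary~\ref{cor:convergence_GCG}: verify that PG, used as the sub-algorithm $\mathcal{A}$ with the optimality measure $\tilde{S}^\alpha$, satisfies the sufficient-decrease property of Assumption~\ref{assum:sufficient_decrease} with explicit constants $\eta_1$ and $\eta_2$, and then substitute these constants together with the bound on $N$ from Corollary~\ref{cor:ITALEX_convergence_specific} into Theorem~\ref{theorem:Convergence}. Since Corollary~\ref{cor:S_tilde_alpha_optimality} already establishes that $\tilde{S}^\alpha$ is a continuous optimality measure for \eqref{def:h}, the only genuinely new work is extracting the pair $(\eta_1,\eta_2)$ and matching the resulting expressions to the claimed $K_1$ and $K_2$.

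For the sufficient decrease, I would invert the definition of $\tilde{S}^\alpha$. Writing $\zeta=\hat{\varphi}^\alpha(\by)-\hat{\varphi}^\alpha(T^\alpha(\by))\geq 0$ for the one-step decrease, the maximum defining $\tilde{S}^\alpha(\by)$ is attained by its first argument exactly when $\zeta\geq \tfrac{L_f+2}{2}\tilde{D}_\alpha(\by)^2$. A short two-case analysis then yields the identity
\begin{equation*}
\hat{\varphi}^\alpha(\by)-\hat{\varphi}^\alpha(T^\alpha(\by))=\min\left\{\tfrac{1}{2}\tilde{S}^\alpha(\by),\ \frac{\tilde{S}^\alpha(\by)^2}{2(L_f+2)\tilde{D}_\alpha(\by)^2}\right\},
\end{equation*}
which is precisely the form required by \eqref{eq:Sufficient_Decrease}, with $\eta_1=\tfrac12$ and a point-dependent $\eta_2=\bigl(2(L_f+2)\tilde{D}_\alpha(\by)^2\bigr)^{-1}$. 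Because decreasing $\eta_2$ only weakens \eqref{eq:Sufficient_Decrease}, it suffices to replace $\tilde{D}_\alpha(\by)$ by any uniform upper bound valid along all iterates.

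The main obstacle, and the point where this proof departs from the GCG case, is establishing the uniform bound $\tilde{D}_\alpha(\by)\leq \hat{D}_0$ over every PG iterate produced during the whole run of ITALEX-CT. I would argue that the values $\hat{\varphi}^{\alpha}(\by)$ are monotonically nonincreasing across the entire execution: within a single approximation-oracle call PG is a descent method for fixed $\alpha$; each expansion step only enlarges $\Lev_\omega(\alpha)$, which relaxes the indicator $\hat{g}^\alpha$ and hence lowers $\hat{\varphi}^\alpha$ at the warm-started point $\by^{k,M_k-1}=\by^{k+1,0}$; and the warm start $(\bx^{r-1},\bz^{r-1})$ together with $\alpha_{r-1}$ is carried between consecutive ITALEX-CT iterations. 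Consequently every iterate satisfies $\hat{\varphi}^\alpha(\by)\leq \varphi(\bx^0)+\norm{\bx^0-\bz^0}^2$ at its associated $\alpha\leq\omega^*$. Feeding this into the bound of Lemma~\ref{lemma:D_alpha}, using $\bar{\varphi}\geq\varphi^*$, $\varepsilon\leq\epsilon_1$, and the nesting $\mathcal{D}_{\Lev_\omega(\alpha)}\leq\mathcal{D}_{\Lev_\omega(\omega^*)}$, bounds the second argument of the $\min$ in $\tilde{D}_\alpha$ by $6\Delta_0+4\mathcal{D}_{\Lev_\omega(\omega^*)}^2$; combined with $\tilde{D}_\alpha(\by)\leq D_\alpha\leq D_{\omega^*}$ this gives $\tilde{D}_\alpha(\by)^2\leq \hat{D}_0^2$, exactly as claimed.

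With the uniform bound in hand, Assumption~\ref{assum:sufficient_decrease} holds with the fixed constants $\eta_1=\tfrac12$ and $\eta_2=\bigl(2(L_f+2)\hat{D}_0^2\bigr)^{-1}$. Substituting these into the expressions for $K_1$ and $K_2$ in Theorem~\ref{theorem:Convergence} — noting $\tfrac{1}{1-\eta_1}=2$, $\tfrac{\eta_2}{\eta_1}=\bigl((L_f+2)\hat{D}_0^2\bigr)^{-1}$, and $\tfrac{32}{\eta_2\varepsilon}=\tfrac{64(L_f+2)\hat{D}_0^2}{\varepsilon}$ — and taking $N$ from Corollary~\ref{cor:ITALEX_convergence_specific} produces the stated bounds. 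I expect the uniform-diameter estimate to be the only delicate step; the remaining manipulations are routine bookkeeping already carried out in the GCG corollary.
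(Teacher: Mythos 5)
Your proposal is correct and follows essentially the same route as the paper's proof: the two-case inversion of $\tilde{S}^\alpha$ (the paper states it as an either/or dichotomy, equivalent to your $\min$ identity), the monotonicity of $\hat{\varphi}^{\tilde\alpha_{k-1}}$ along the warm-started iterates combined with Lemma~\ref{lemma:D_alpha} to obtain the uniform bound $\tilde{D}_\alpha(\by)\leq\hat{D}_0$, and the substitution of $\eta_1=\tfrac12$, $\eta_2=\bigl(2(L_f+2)\hat{D}_0^2\bigr)^{-1}$ together with $N$ from Corollary~\ref{cor:ITALEX_convergence_specific} into Theorem~\ref{theorem:Convergence}. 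You correctly identified the uniform-diameter estimate as the one step that genuinely distinguishes this corollary from the GCG case.
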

\begin{proof}
By the definition of $\tilde{S}^\alpha$, either 
\begin{equation}\label{eq:prox_cor_1}
     \hat{\varphi}^\alpha(\by)-\hat{\varphi}^\alpha\left(T^\alpha(\by)\right)= \frac{1}{2}\tilde{S}^\alpha(\by),
\end{equation}
or, $\tilde{S}^\alpha(\by)=\tilde{D}_\alpha(\by)\sqrt{2(L_f+2)\left(\hat{\varphi}^\alpha(\by)-\hat{\varphi}^\alpha\left(T^\alpha(\by)\right)\right)}$, implying \begin{equation}\label{eq:prox_cor_2}
     \hat{\varphi}^\alpha(\by)-\hat{\varphi}^\alpha\left(T^\alpha(\by)\right)=\frac{\tilde{S}^\alpha(\by)^2}{2(L_f+2) \tilde{D}_\alpha(\by)^2}.
\end{equation}
{We fix the iteration $r\in[R]$ of ITALEX-CT. The sequence of $\{\tilde\alpha_{k-1}\}_{k\in\mathbb{N}}$ in ITALEX-FT is non-decreasing,
$\hat{\varphi}^{\tilde\alpha_{k}}(\by)\geq \hat{\varphi}^{\tilde\alpha_{k+1}}(\by)$ for any $\by\in\Real^{2n}$. Thus, the sequence {$\{\hat{\varphi}^{\tilde\alpha_{k-1}}((\tilde\bx^{k-1},\tilde\bz^{k-1}))\}_{k\in\mathbb{N}}$} is nonincreasing and bounded from above by {$\hat{\varphi}^{\tilde\alpha_0}((\tilde\bx^0,\tilde\bz^0))=\hat{\varphi}^{\alpha_{r-1}}((\bx^{r-1},\bz^{r-1}))\leq \hat{\varphi}^{\alpha_{0}}((\bx^{0},\bz^{0}))=v_0.$ }% 
 Moreover, since ${\tilde\alpha_{k}}\leq \omega^*$, {$\mathcal{D}_{\Lev_{\omega}(\tilde{\alpha_{k}})}\leq \mathcal{D}_{\Lev_{\omega}(\omega^*)}$} and ${{D}_{\tilde\alpha_{k}}}\leq {D}_{\omega^*}$.} {Combining this with $\bar{\varphi}\equiv\bar{\varphi}^{r}\geq \varphi^*$ and $\varepsilon\leq \epsilon_1$, we obtain that $\hat{D}_0\geq \tilde{D}_{\tilde\alpha_{k-1}}(\by)$  for any $\by\in \Lev_{\hat{\varphi}^{\tilde\alpha_{k-1}}}(v_0)$.
 %, and specifically for any inner iteration of PG.
Combining \eqref{eq:prox_cor_1}, \eqref{eq:prox_cor_2}, and this upper bound $\hat{D}_0$, implies that for %{any fixed ITALEX-CT iterate $r\in[R]$, 
any ITALEX-FT iterate $k$ and PG iterate $j$ it holds that
\begin{equation*}
\hat{\varphi}^{\tilde\alpha_{k-1}}(\by^{k,j})-\hat{\varphi}^{\tilde\alpha_{k-1}}\left(\by^{k,j+1}\right)\geq \min\left\{\frac{1}{2}\tilde{S}^{\tilde{\alpha}_{k-1}}(\by^{k,j}),\frac{\tilde{S}^{\tilde\alpha_{k-1}}(\by^{k,j})^2}{2(L_f+2) \hat{D}_0^2}\right\}.
\end{equation*}}
Hence,  Assumption~\ref{assum:sufficient_decrease} holds with $\eta_1=\frac{1}{2},\eta_2=\frac{1}{2(L_f+2) \hat{D}_0^2}$. Plugging $N$ from Corollary~\ref{cor:ITALEX_convergence_specific} in Theorem~\ref{theorem:Convergence} we obtain the desired  result.
\end{proof}
\begin{remark}
Algorithm~\ref{Alg:PG_Oracle} can be replaced with a PG algorithm in which the step-size is determined by a backtracking procedure, using a local Lipschitz constant of the gradient \cite[Section 10.4.3]{FO_Book}. This algorithm does not require knowledge of a global Lipschitz constant $L_f$ and admits similar convergence guarantees. \end{remark}
\begin{remark} 
A similar proof can be used for any nonincreasing $\frac{1}{\beta}$-PDA method as described in \cite{PDA} by computing $\tilde{S}^\alpha$ using the PDA method and multiplying it by $\beta$ to get an optimality measure.
\end{remark}

\section{Special case: smooth inner function}
In this section, we address the special case where $g=0$, that is $\varphi\equiv f$ is a smooth function. We show that in this case, a variation of ITALEX-CT presented in Algorithm~\ref{Alg:ITALEX_Generalized} can be used to get a slightly stronger result, guarantying that for any iteration $k$ the iterator $\bx^{k}$ is super-optimal, \ie $\;\omega(\bx^k)\leq \omega^*$. 

We start by redefining function $H$ and $h$ from Section~\ref{subsec:general_scheme} as
$$H(\bx,\alpha)=f(\bx)+\delta_{\epi(\omega)}(\bx,\alpha),\; h(\alpha)=\min_{\bx\in\Real^n} H(\bx,\alpha), $$
consequently eliminating the need for variables $\bz$ (or equivalently restricting $\bz=\bx$). We still use the same approach of finding the minimal $\alpha$ for which $h(\alpha)\leq \varphi^*$. Thus, as before, we are required to construct an approximation oracle and an expansion oracle satisfying Definitions \ref{Def:APP_Oracle} and \ref{Def:EXP_Oracle}, respectively.

Note that due to the new definition of $h$ the expansion oracle defined in Theorem~\ref{thm:Expansion} is no longer valid. Thus, we present a new expansion oracle, which also relies on the structure of $\varphi$. This expansion oracle requires knowledge of the \emph{global} Lipschitz constant $L_f$ of the gradient of $f$. 
\begin{proposition}\label{prop:expansion_new}
Let Assumption \ref{assumption:omega} hold, and let Assumption \ref{assumption:phi} hold with $g=0$.
Define $\Delta:\Real_{++}\rightarrow\Real_{++}$ as $\Delta(\rho)=\frac{1}{\gamma}{\left(\frac{2\rho}{L_f}\right)^{\sfrac{\kappa}{2}}}$. Then, 
$\mathcal{E}^{\omega,\varphi}(\alpha,\bar{\varphi},\rho)=\alpha+\Delta(\rho)$
is an expansion oracle.
%Then, $\mathcal{E}^{\omega,\varphi}(\alpha,\bar{\varphi},\rho)=\alpha+\Delta(\rho)$
%is an expansion oracle, where $\Delta:\Real_{++}\rightarrow\Real_{++}$ is defined as 
%\begin{equation*}
%\Delta(\rho)=\frac{1}{\gamma}{\left(\frac{2\rho}{L_f}\right)^{\sfrac{\kappa}{2}}}.
%\end{equation*}
%.
\end{proposition}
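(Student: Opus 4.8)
The plan is to verify that the proposed $\Delta(\rho)=\frac{1}{\gamma}\left(\frac{2\rho}{L_f}\right)^{\kappa/2}$ satisfies the two requirements of an expansion oracle (Definition~\ref{Def:EXP_Oracle}): first, that it is a nondecreasing continuous function satisfying the key bound \eqref{eq:Delta_demand}, namely $h(\omega^*-\Delta(\rho))\leq \bar\varphi+\rho$; and second, that with this choice $\mathcal{E}^{\omega,\varphi}(\alpha,\bar\varphi,\rho)=\alpha+\Delta(\rho)$ returns a value in the required interval \eqref{eq:beta_cond}. The continuity and monotonicity of $\Delta$ are immediate from its closed form, so the real content is establishing an upper bound on $h$ near $\omega^*$ that reflects the smoothness of $\varphi=f$. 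This mirrors the structure of the proof of Theorem~\ref{thm:Expansion}, with the crucial difference that now $h(\alpha)=\min_{\bx}\{f(\bx)+\delta_{\epi(\omega)}(\bx,\alpha)\}$ no longer contains the quadratic penalty $\norm{\bz-\bx}^2$, so I cannot bound $h$ by plugging $\bx^*$ and paying a squared-distance term. Instead I must exploit the $L_f$-smoothness of $f$ directly.

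\textbf{Key steps.} First I would fix an optimal solution $\bx^*$ of \eqref{prob:MNP}, so that $f(\bx^*)=\varphi^*$ and $\omega(\bx^*)=\omega^*$. For a level $\alpha\in[\ubar{\omega},\omega^*]$, the new $h(\alpha)$ is the minimum of $f$ over $\Lev_\omega(\alpha)$. To bound this I would take the projection $\bx^\alpha=\proj_{\Lev_\omega(\alpha)}(\bx^*)$, which is feasible for the problem defining $h(\alpha)$, so that $h(\alpha)\leq f(\bx^\alpha)$. The descent-lemma / quadratic-upper-bound consequence of $L_f$-Lipschitz gradient gives
\begin{equation*}
f(\bx^\alpha)\leq f(\bx^*)+\langle \nabla f(\bx^*),\bx^\alpha-\bx^*\rangle+\frac{L_f}{2}\norm{\bx^\alpha-\bx^*}^2.
\end{equation*}
Because $\bx^*$ minimizes $f$ over $X^*$ and, more usefully, is a minimizer of the inner problem so $\nabla f(\bx^*)$ behaves like that of an unconstrained minimizer on $X^*$, the linear term should drop out (or be nonpositive); the cleaner route is to note $\bx^*\in X^*$ minimizes $f$ globally over the inner feasible set, giving $\langle \nabla f(\bx^*),\bx^\alpha-\bx^*\rangle\geq 0$ is the wrong sign, so I would instead argue $\nabla f(\bx^*)=\bzero$ is not available—hence I rely on $\bx^*$ being an unconstrained-inner minimizer only on $X^*$, and bound the inner product using optimality. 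The robust fallback is that $f(\bx^\alpha)-\varphi^*\leq \frac{L_f}{2}\norm{\bx^\alpha-\bx^*}^2=\frac{L_f}{2}\dist(\bx^*,\Lev_\omega(\alpha))^2$, which holds whenever the linear term is controlled; then Assumption~\ref{assumption:omega}\ref{ass:omega_error_bound} yields $\dist(\bx^*,\Lev_\omega(\alpha))^\kappa\leq\gamma(\omega^*-\alpha)$, so
\begin{equation*}
h(\alpha)-\varphi^*\leq \frac{L_f}{2}\bigl(\gamma(\omega^*-\alpha)\bigr)^{2/\kappa}.
\end{equation*}

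\textbf{Concluding.} Setting $\tilde\alpha=\omega^*-\Delta(\rho)$ and substituting $\omega^*-\tilde\alpha=\Delta(\rho)=\frac{1}{\gamma}(2\rho/L_f)^{\kappa/2}$ into the displayed bound collapses the right-hand side to exactly $\rho$, giving $h(\omega^*-\Delta(\rho))\leq\varphi^*+\rho\leq\bar\varphi+\rho$ since $\bar\varphi\geq\varphi^*$, which is \eqref{eq:Delta_demand}. For the inclusion \eqref{eq:beta_cond}, taking $0<\rho\leq h(\alpha)-\bar\varphi\leq h(\alpha)-\varphi^*$ and inverting the same inequality (using that $h$ is nonincreasing, Lemma~\ref{lemma:h_decreasing}) gives $\Delta(\rho)\leq\frac{1}{\gamma}\bigl(\frac{2}{L_f}(h(\alpha)-\varphi^*)\bigr)^{\kappa/2}\leq\omega^*-\alpha$, so $\alpha+\Delta(\rho)\in[\alpha+\Delta(\rho),\omega^*]$ as required. \textbf{The main obstacle} I anticipate is handling the linear term $\langle\nabla f(\bx^*),\bx^\alpha-\bx^*\rangle$ rigorously: unlike in Theorem~\ref{thm:Expansion}, there is no quadratic cushion absorbing first-order information, so I must argue carefully—presumably via the first-order optimality of $\bx^*$ for the inner problem \eqref{prob:P}, which gives $\langle\nabla f(\bx^*),\bv-\bx^*\rangle\geq 0$ for directions into the inner feasible region—that this term does not spoil the clean $\frac{L_f}{2}\norm{\cdot}^2$ bound; getting the sign and the admissible directions right is the delicate part of the argument.
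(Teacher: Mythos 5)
Your overall architecture is exactly the paper's: bound $h(\alpha)\leq f(\proj_{\Lev_\omega(\alpha)}(\bx^*))$, apply the descent lemma at $\bx^*$, convert $\dist(\bx^*,\Lev_\omega(\alpha))$ to $\gamma(\omega^*-\alpha)$ via Assumption~\ref{assumption:omega}, and then run the same algebra as in Theorem~\ref{thm:Expansion} to get \eqref{eq:Delta_demand} and \eqref{eq:beta_cond}. That closing algebra is correct. But you leave a genuine gap at the one step you yourself flag as the crux: you assert that ``$\nabla f(\bx^*)=\bzero$ is not available'' and then offer only a conditional ``robust fallback'' (valid ``whenever the linear term is controlled'') plus a vague appeal to a variational inequality whose sign you admit is wrong. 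In fact $\nabla f(\bx^*)=\bzero$ \emph{is} available, and it is precisely what the paper uses: since $g=0$, the inner problem \eqref{prob:P} is the \emph{unconstrained} minimization of the convex differentiable $f$ over $\Real^n$, so $X^*=\argmin_{\bx\in\Real^n}f(\bx)$ and every point of $X^*$ --- in particular the bilevel-optimal $\bx^*$ --- is a global unconstrained minimizer, whence $\nabla f(\bx^*)=\bzero$ and the linear term $\langle\nabla f(\bx^*),\bx^\alpha-\bx^*\rangle$ vanishes identically. There is no ``inner feasible region'' to worry about and no delicate sign analysis; the variational inequality $\langle\nabla f(\bx^*),\bv-\bx^*\rangle\geq 0$ holds for \emph{all} $\bv\in\Real^n$, which already forces the gradient to be zero.

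This matters because without that observation your chain of inequalities does not close: the descent lemma gives $f(\bx^\alpha)\leq f(\bx^*)+\langle\nabla f(\bx^*),\bx^\alpha-\bx^*\rangle+\frac{L_f}{2}\norm{\bx^\alpha-\bx^*}^2$, and if the linear term could be positive the bound $h(\alpha)\leq\varphi^*+\frac{L_f}{2}\left(\gamma(\omega^*-\alpha)\right)^{\sfrac{2}{\kappa}}$ --- and hence the specific power $\kappa/2$ and constant $2/L_f$ in $\Delta(\rho)$ --- would not follow. Once you insert $\nabla f(\bx^*)=\bzero$, everything else you wrote goes through and coincides with the paper's proof.
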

\begin{proof}
From the definition of $\Delta(\rho)$, it is easy to see that for any $\rho>0$ it is a nonnegative and increasing function of $\rho$. Moreover, let $\bx^*$ be an the optimal solution of problem~\eqref{prob:MNP}, let $\alpha\in [\ubar{\omega},\omega^*]$, and let  $\tilde{\bx}=P_{\Lev_\omega(\alpha)}(\bx^*)$. Then, 
\begin{align}
h(\alpha)\leq \varphi(\tilde{\bx})=f(\tilde{\bx})&\leq f(\bx^*)+\langle\nabla f(\bx^*),\tilde{\bx}-\bx^*\rangle +\frac{L_f}{2}\norm{\tilde{\bx}-\bx^*}^2\nonumber\\
&\leq \varphi^*+\frac{L_f}{2}\dist(\bx^*,\Lev_{\omega}(\alpha))^2\nonumber\leq \varphi^*+\frac{L_f}{2}{\left(\gamma(\omega^*-\alpha)\right)^{\sfrac{2}{\kappa}}}\label{eq:omega_h_connect2}
\end{align}
where the first inequality is due to the definition of $\tilde{\bx}$ and $h(\alpha)$, the second inequality is due to the Descent Lemma \cite[Lemma 1.2.3]{Convex_Opt_Book}, the third inequality follows from the optimality of $\bx^*$ implying that $\nabla f(\bx^*)=0$ and $f(\bx^*)=\varphi^*$, and the last inequality is due to Assumption~\ref{assumption:omega}.
The rest of the proof is identical to that of Theorem~\ref{thm:Expansion}.
%Thus, for any $\rho$ such that $\Delta(\rho)\in (0,\omega^*-\ubar{\omega}]$ defining ${\alpha}=\omega^*-\Delta(\rho)$,
%and plugging it back into \eqref{eq:omega_h_connect2} we immediately obtain property \eqref{eq:Delta_demand}. Moreover, setting $0<\rho\leq h(\alpha)-\bar{\varphi}\leq h(\alpha)-\varphi^*$, combining \eqref{eq:omega_h_connect2} with the definition of $\Delta(\cdot)$ implies
%$$\Delta(\rho)\leq \frac{\sqrt{h(\alpha)-\varphi^*}}{\gamma}\leq \omega^*-\alpha,$$ which proves inclusion~\eqref{eq:beta_cond}.
\end{proof}

Redefining $\hat{\varphi}^\alpha$ as
$$\hat{\varphi}^\alpha(\bx)=f(\bx)+\delta_{\Lev_\omega(\alpha)}(\bx),$$
results again in a composite function with compact domain with diameter $\mathcal{D}_{\Lev_\omega(\alpha)}$. Therefore, we can use approximation oracles based on GCG or PG, as before, with appropriate optimality measures. Thus, the following result follows directly from Theorems~\ref{thm:general_convergence} and \ref{theorem:Convergence}, and the fact that   $\omega(\bx^k)\leq \alpha_k\leq \omega^*$ at each outer iteration $k$.
The proof is similar to the proofs of {Theorem \ref{theorem:Convergence},} Corollaries \ref{cor:ITALEX_convergence_specific}, \ref{cor:convergence_GCG} and \ref{cor:PG_Convergence_rate}, and thus is not presented.
\begin{corollary}
Let Assumptions~\ref{assumption:omega} hold and let Assumption \ref{assumption:phi} hold with $g\equiv0$. Then, {ITALEX-CT} with the approximation oracle presented in Algorithm~\ref{alg:approx_alg} using GCG or PG as the iteration of $\mathcal{A}$ and the expansion oracle defined in Proposition~\ref{prop:expansion_new},
outputs a vector {$\bx^R$, with $R\leq \left\lceil\log_2\left(\frac{\epsilon_1}{\varepsilon}\right)\right\rceil+1$, such that  
$$\varphi({\bx^R})-\varphi^*\leq\varepsilon,\;\text{ and } \omega({\bx^R})\leq \omega^*,$$}
after at most {$K_1+K_2+N$} inner (GCG/PG) iterations, where
{
\begin{align*}
     K_1&=\log_{2}\left(\min\left\{\frac{1}{ L_f \mathcal{D}_{\Lev_{\omega}(\omega^*)}^2},\frac{4}{\epsilon_1}\right\}\left(\varphi(\bx^0)-\bar{\varphi}_1\right)\right), \\
     K_2&=\frac{64L_f \mathcal{D}_{\Lev_{\omega}(\omega^*)}}{\varepsilon}+\left(\log_{2}\left(9\right)+2\right)\left(\left\lceil\log_2\left(\frac{\epsilon_1}{\varepsilon}\right)\right\rceil+1\right),
     \\ N&=\left\lceil \left(\frac{2L_f}{\varepsilon}\right)^{\sfrac{\kappa}{2}}\gamma(\omega^*-\omega(\bx^0))\right\rceil+\left\lceil\log_2\left(\frac{\epsilon_1}{\varepsilon}\right)\right\rceil+1.
\end{align*}}
\end{corollary}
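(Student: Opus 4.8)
The plan is to re-trace the argument of the general case, re-establishing each hypothesis of the abstract convergence theorems for the modified triple $(H,h,\hat{\varphi}^\alpha)$ and then isolating the one genuinely new feature, the super-optimality. Since Theorems~\ref{thm:general_convergence} and \ref{theorem:Convergence} are stated purely in terms of the two oracles and the properties of $h$, they apply once their hypotheses are checked. First I would note that the redefined $h(\alpha)=\min_{\bx\in\Lev_\omega(\alpha)}f(\bx)$ is a partial minimization over $\bx$ of a jointly convex function, hence convex, nonincreasing, and decreasing on $(\ubar{\omega},\omega^*)$, so the analogue of Lemma~\ref{lemma:h_decreasing} holds. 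The expansion oracle is supplied by Proposition~\ref{prop:expansion_new} with $\Delta(\rho)=\frac{1}{\gamma}\bigl(\frac{2\rho}{L_f}\bigr)^{\sfrac{\kappa}{2}}$. For the approximation oracle I would observe that, with $\bz$ eliminated, the surrogate $\hat{\varphi}^\alpha(\bx)=f(\bx)+\delta_{\Lev_\omega(\alpha)}(\bx)$ is still composite, its smooth part $f$ having an $L_f$-Lipschitz gradient and its domain $\Lev_\omega(\alpha)$ being compact by Assumption~\ref{assumption:omega}\ref{ass:omega_compact}. Thus $S^\alpha$ (for GCG) and $\tilde{S}^\alpha$ (for PG) remain valid continuous optimality measures satisfying the sufficient-decrease property of Assumption~\ref{assum:sufficient_decrease}, exactly as in Corollaries~\ref{S_alpha_Optimality_measure} and \ref{cor:S_tilde_alpha_optimality}; the only changes are that the relevant diameter becomes $\mathcal{D}_{\Lev_\omega(\alpha)}$ instead of $D_\alpha$, and the smooth part's Lipschitz constant is $L_f$ rather than $L_f+2$, since the $\norm{\by_1-\by_2}^2$ term is gone.

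With both oracles validated I would invoke Theorem~\ref{thm:general_convergence}\ref{thm:ITALEX-CT_convergence}: after $R\leq\lceil\log_2(\epsilon_1/\varepsilon)\rceil+1$ outer iterations ITALEX-CT returns $\bx^R$ with $\varphi(\bx^R)\leq\varphi^*+\varepsilon$ using $N\leq\frac{\omega^*-\omega(\bx^0)}{\Delta(\varepsilon/4)}+R$ approximation-oracle calls. Substituting the new $\Delta$ gives $\Delta(\varepsilon/4)^{-1}=\gamma\bigl(\frac{2L_f}{\varepsilon}\bigr)^{\sfrac{\kappa}{2}}$, hence the stated $N$, and reproduces the bound $\varphi(\bx^R)-\varphi^*\leq\varepsilon$.

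The conceptually new step is the super-optimality $\omega(\bx^R)\leq\omega^*$, and here the elimination of $\bz$ does the work. Because $\hat{\varphi}^\alpha$ carries the hard constraint $\delta_{\Lev_\omega(\alpha)}$, every inner iterate produced by $\mathcal{A}$ stays feasible: a GCG update is a convex combination of two points of the convex set $\Lev_\omega(\tilde\alpha_{k-1})$, and a PG update is a projection onto it. Consequently the returned point satisfies $\bx^R\in\Lev_\omega(\alpha_R)$, \ie\ $\omega(\bx^R)\leq\alpha_R$, and combining this with the expansion-oracle invariant $\alpha_R\leq\omega^*$ recorded in \eqref{eq:beta_cond} yields $\omega(\bx^R)\leq\alpha_R\leq\omega^*$. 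This replaces the feasibility conclusion $\dist(\bx^R,\Lev_\omega(\omega^*))\leq\sqrt{\varepsilon}$ of the general theorem by an exact super-optimality statement.

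Finally, for the inner-iteration count I would re-run the bookkeeping of Theorem~\ref{theorem:Convergence} with the smooth-case constants. Both GCG and PG yield $\eta_1=\tfrac12$, so $\log_{1/(1-\eta_1)}=\log_2$; and $\eta_2=\frac{1}{2L_f\mathcal{D}_{\Lev_\omega(\omega^*)}^2}$, using $\mathcal{D}_{\Lev_\omega(\tilde\alpha_k)}\leq\mathcal{D}_{\Lev_\omega(\omega^*)}$ since $\tilde\alpha_k\leq\omega^*$. Then $\tfrac{32}{\eta_2\varepsilon}=\frac{64L_f\mathcal{D}_{\Lev_\omega(\omega^*)}^2}{\varepsilon}$ and $\tfrac{\eta_2}{\eta_1}=\frac{1}{L_f\mathcal{D}_{\Lev_\omega(\omega^*)}^2}$, which, after dropping the $\norm{\bz^0-\bx^0}^2$ term from the initial gap, match the stated $K_1$ and $K_2$. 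I do not expect a genuine obstacle: the one idea needed is the feasibility-preservation observation above, and the remaining work is a careful transcription in which the only thing to watch is the systematic replacement of $D_\alpha$ by $\mathcal{D}_{\Lev_\omega(\alpha)}$ and of $L_f+2$ by $L_f$ throughout the constants.
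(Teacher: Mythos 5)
Your proposal is correct and follows essentially the same route as the paper, which explicitly derives this corollary by re-invoking Theorems~\ref{thm:general_convergence} and \ref{theorem:Convergence} with the redefined oracles and by noting that $\omega(\bx^k)\leq\alpha_k\leq\omega^*$ at every iteration; your feasibility-preservation argument for the super-optimality and your constant substitutions ($L_f$ for $L_f+2$, $\mathcal{D}_{\Lev_\omega(\omega^*)}$ for $D_{\omega^*}$) match the intended proof. Your $K_2$ even carries the square on $\mathcal{D}_{\Lev_\omega(\omega^*)}$ that the stated corollary appears to have dropped by typo.
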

% \begin{align*}
%      K_1&=\left\lceil \log_{2}\left(\min\left\{L_f\mathcal{D}_{\Lev_{\omega}(\omega^*)}^2,\frac{4}{\varepsilon}\right\}(\varphi(\bx_0)-\bar{\varphi}\right)\right\rceil, \\
%      K_2&=\left\lceil\frac{16L_f \mathcal{D}_{\Lev_{\omega}(\omega^*)}^2}{\varepsilon} \right\rceil,\quad N=\left\lceil \frac{\gamma\sqrt{2L_f}\left(\omega^*-\omega(\bz^0)\right)}{\sqrt{\varepsilon}}\right\rceil.
% \end{align*}

\section {Numerical results}\label{section:Numerical}
In this section, we present a set of numerical experiments for comparing ITALEX-CT with existing methods. The general setup considers an inner function $\varphi=f+g$ where the smooth function $f$ is given by 
\begin{equation*}
    f(\bx)=\|\bA\bx-\tilde{\bb}\|^2,
\end{equation*} 
which has a $L_f$-Lipschitz continuous gradient with $L_f=\lambda_{\max}{(\bA^T\bA)}$.
Each experiment differs in the choice of  the nonsmooth $g$ and outer function $\omega$. {The functions $\omega$ satisfy Assumption~\ref{assumption:omega}, where $\gamma$ and $\kappa$ are given by Table~\ref{tbl:error_bound}.}

Following the numerical experiments in \cite{MNG}, 
we used  the ``regularization tools" MATLAB package \cite{R_ToolBox} %, which generate ill-conditioned inverse problems arising from discretizations of Fredholm integral equations of the first kind, 
to generate problems of the form $\bA\bx=\bb$. Thus, the parameter $\bA\in\Real^{n\times n}$ and the variable $\bx\in\Real^n$ with $n=1000$ were generated exactly by the functions "phillips","foxgood","baart" of the package (unless stated otherwise, see Sections~\ref{sec:numerical_nonsmooth_stronglyconvex} and \ref{sec:numerical_nonsmooth}). For each setting we generate $I=50$ instances of the problem where in each instance $i$, the vector $\tilde{\bb}=\bA\bx+\bnu^i$ and each component of $\bnu^i$ is sampled from a normal distribution with $\mu=0$ and $\sigma=10^{-2}$.
%generated by a gaussian with mean $\bzero$ and standard deviation $\sigma=10^{-2}$. 

In each experiment, we compared two variations of the {ITALEX-CT method, referred to hereafter as ITALEX, }where the approximation oracle is implemented by either PG or GCG, with the methods mentioned in Table~\ref{table:Comparison}. We let each of the methods run for 30 minutes. {The total number of the PG/GCG iterations of method $m$ in realization $i$ is denoted as $\bar{M}_{i,m}$.
We denote the set of methods compared in an experiment by $\mathcal{M}$.}
 For each realization $i$ and tested method $m$ we
define $\{\bx^k_{i,m}\}_{k\in\mathbb{N}}$ as the sequence produced by the method, % and the the corresponding times for the end of these iterations $\{t^k_{r,m}\}_{k\in\mathbb{N}}$. Additionally for 
{and for each time $t$, we compute $\tilde{\bx}^t_{i,m}$ as the last iterate that was produced % for $k=\max\{j\in\mathbb{N}:t^j_{r,m}\leq t\}$, \ie\, the last 
before time $t$.}
%For each time $t$ and method $m\in\mathcal{M}$ 
{Thus, we compare the average \emph{inner function} normalized optimality gap, given by $$\Delta\varphi_{t,m}=\frac{1}{I}\sum_{i=1}^I\frac{\varphi(\tilde{\bx}^t_{i,m})-\varphi_i^*}{\|\bx_i^*\|^2},$$ where $\varphi_i^*$ denotes the optimal value of the inner function, which was computed using CVX \cite{cvx} for MATLAB and Gurobi version 9.1.1 solver \cite{gurobi}, and $\bx^*_i$ denotes the approximated optimal solution of the bilevel problem. We note that the use of $\|\bx^*_i\|^2$ in the denominator is due to the fact that the guarantees for the algorithms are usually in the form of $\frac{\varphi(\bx^k)-\varphi^*}{\|\bx^0-\bx^*\|^2}$ (\cite{Big_SAM}, \cite{MNG}), and in this paper we provide a guarantee of the form $\frac{\varphi(\bx^k)-\varphi^*}{D_{\omega^*}^2}$ which also scales up with $\|\bx^*\|^2$. Since the optimal solution of the bilevel problem is unknown, $\bx^*_i$ was taken as the iterate achieving the minimal $\varphi$ value by any method tested.}
%$$\bx_i^*=\argmin_{x\in\{\bx^{\bar{M}_{i,m}}_{i,m}:m\in\mathcal{M}\}}\varphi(\bx).$$} 

To compare the value of the \textit{outer} function, we could not use off-the shelf solvers to solve \eqref{prob:MNP'} in order to evaluate $\omega^*_i$, as we encountered the numerical problems described in Subsection~\ref{subsection:challenges}. Therefore, we compare the ratio between the \textit{outer} function values to the maximum value achieved by any of the algorithms, that is, denoting $\omega_{max}^i=\max_{ m\in \mathcal{M}, k}\left\{\omega\left(\bx^k_{i,m}\right)\right\}$ we measure $$\Delta\omega_{t,m}=\frac{1}{I}\sum_{r=1}^I 1-\frac{\omega(\tilde{x}^t_{i,m})}{\omega_{max}^i}.$$ Notice that a higher value of $\Delta\omega$ corresponds with a lower value of $\omega$. This measure will indicate how close each method is to $\omega^*_i$ for each realization $i$. We would generally expect that lower level of $\Delta\varphi$ will correspond with higher levels of $\omega$ (and therefore, lower levels of $\Delta \omega$).
Since \cite{MNG} demands that $\bx^0$ be $\argmin_{\bx\in\Real^n}\{\omega(\bx)\}$, and ITALEX demands that $\omega(\bx^0)\leq\omega^*$, we used $\bx^0=\bzero$ in all the experiments.
All experiments were ran on an Intel(R) Xeon(R) Gold 6254
CPU @ 3.10 GHz with a total of 300 GB RAM and 72 threads,
using MATLAB 2019a, where each method was allowed to run
on only one thread.

\subsection{Smooth and  strongly convex outer function}

In the first experiment, we used the same setup as in the numerical experiments in \cite{Big_SAM}, solving integral equations. Thus, $g(\bx)\equiv \delta_{\Real^n_{+}}$ is the indicator function of the non-negative orthant, $f$ is as described above, and the outer function is defined as $\omega(\bx)=\|\bx\|_{\bQ}=\sqrt{\bx^\top\bQ\bx}$,
where the positive definite matrix $\bQ=\mathbf{L}^\top\mathbf{L}+\mathbf{I}$, and $\mathbf{L}$, generated by the function $'get\_l(1000,1)'$ from \cite{R_ToolBox}, approximates the first-derivative operator. In this experiment, since optimizing over $\omega$ or $\omega^2$ is equivalent, we can compare ITALEX with methods that require both strong convexity and smoothness of the outer function: BiG-SAM \cite{Big_SAM}, iBiG-SAM \cite{iBiG}, IR-PG \cite{Solodov}, and MNG \cite{MNG}. %In order to run ITALEX, we computed the value of $\gamma$ in Assumption \ref{assumption:omega} to be $\lambda_{min}(\bQ)^{-0.5}$ (see Table \ref{tbl:error_bound}).
The GCG implementation of ITALEX requires a bounded feasible set. Therefore, we utilized $\tilde{D}_\alpha(\by)$ from Lemma~\ref{lemma:D_alpha}, and for any fixed level set $\alpha_k$ and inner iteration $\by^j\in\Real^{2n}$ we redefined $\hat{g}^{\alpha_k}=\delta_{C_{\alpha_k}(\by^j)}+\delta_{\Lev_{\omega}(\alpha_k)}$, where $C_\alpha(\by)=\{\bx\in\Real^n_+:\norm{\bx}\leq \tilde{D}_\alpha(\by)\}$, and applied the GCG step.

\begin{figure}[b!]
\centering
\includegraphics[width=0.9\textwidth]{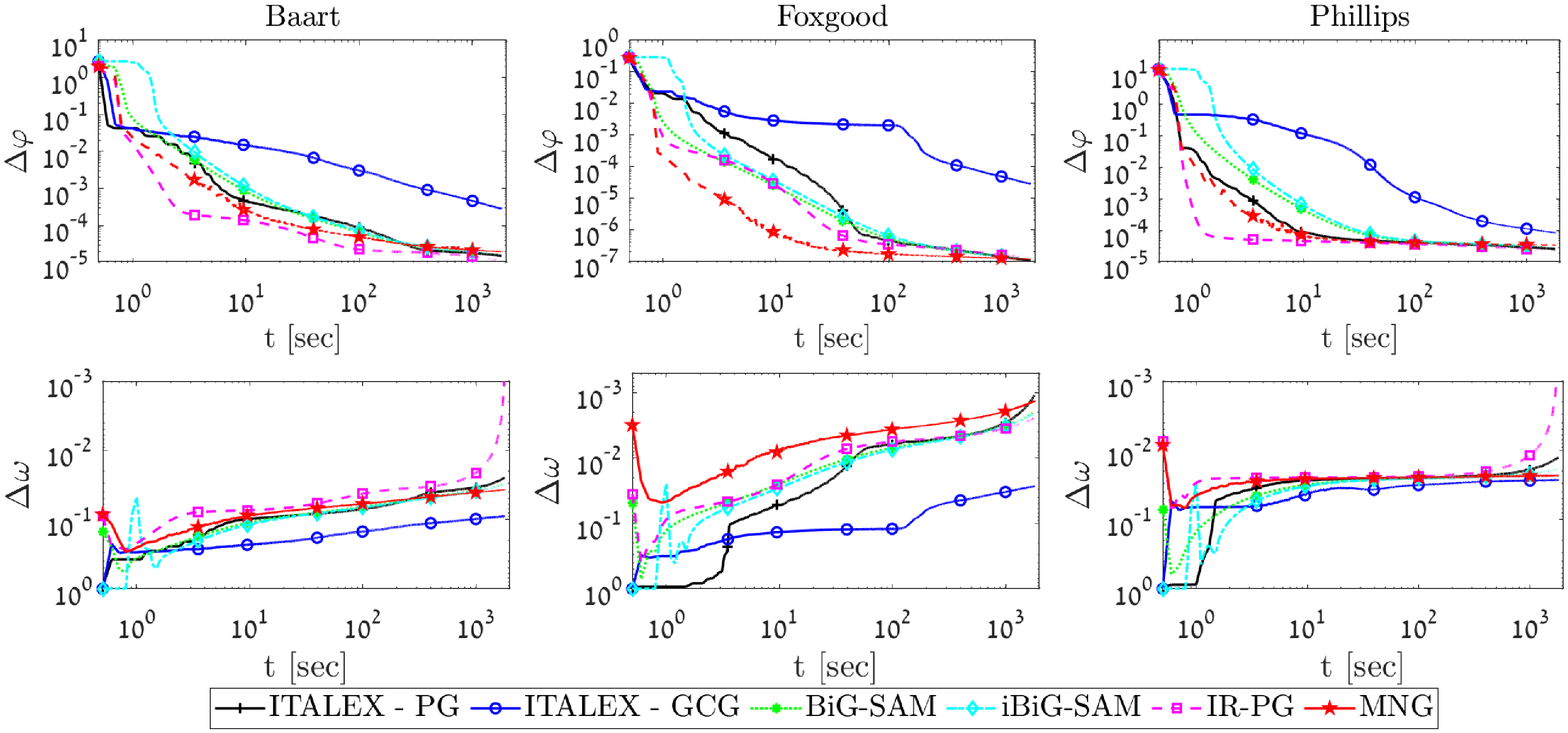}
\captionsetup{justification=centering}
\caption{$\Delta\varphi$ and $\Delta\omega$ over time for $\varphi(\bx)=\|\mathbf{A}\bx-\mathbf{b}\|^2+\delta_{\Real^n_{+}}$ and $\omega(\bx)=\|\bx\|_{\bQ}$.}\label{fig:exp1}
\end{figure}

\begin{table}[t]
\centering
\begin{minipage}{\textwidth}
\centering
\small
        \begin{tabular}{ |p{1.8cm}||p{1.4cm}|p{1.4cm}|p{1.4cm}|p{1.4cm}|p{1.4cm}|p{1.4cm}|  }\hline \multirow{3}{2.4cm}{Method}&
        \multicolumn{2}{c|}{Baart}&
        \multicolumn{2}{c|}{Foxgood}&
        \multicolumn{2}{c|}{Phillips}\\
         &   $\Delta\varphi$&   $\Delta\omega$&   $\Delta\varphi$&   $\Delta\omega$&   $\Delta\varphi$&   $\Delta\omega$\\\hline 
        ITALEX - PG&$1.41$e$-5$ $(1.7$e$-5)$&$1.87$e$-2$ $(2.7$e$-2)$&$\mathbf{1.10}$\textbf{e}$\mathbf{-7}$ $(8.3$e$-8)$&$8.49$e$-4$ $(9.5$e$-4)$&$2.74$e$-5$ $(6.2$e$-6)$&$1.52$e$-2$ $(1.2$e$-2)$\\ \hline ITALEX - GCG&$2.83$e$-4$ $ (6.9$e$-5)$&$8.66$e$-2$ $ (5.0$e$-2)$&$3.49$e$-5$ $ (4.1$e$-6)$&$2.87$e$-2$ $ (3.4$e$-3)$&$8.77$e$-5$ $ (1.5$e$-5)$&$3.15$e$-2$ $ (1.7$e$-2)$\\ \hline BiG-SAM&$1.54$e$-5$ $ (2.0$e$-5)$&$2.44$e$-2$ $ (3.5$e$-2)$&$1.12$e$-7$ $ (8.1$e$-8)$&$1.13$e$-3$ $ (9.8$e$-4)$&$3.11$e$-5$ $ (7.1$e$-6)$&$2.22$e$-2$ $ (1.3$e$-2)$\\ \hline iBiG-SAM&$1.56$e$-5$ $ (2.0$e$-5)$&$2.52$e$-2$ $ (3.5$e$-2)$&$1.14$e$-7$ $ (8.2$e$-8)$&$1.30$e$-3$ $ (9.6$e$-4)$&$3.12$e$-5$ $ (7.3$e$-6)$&$2.23$e$-2$ $ (1.3$e$-2)$\\ \hline IR-PG&$\mathbf{1.10}$\textbf{e}$\mathbf{-5}$ $ (1.3$e$-5)$&$1.27$e$-4$ $ (3.9$e$-4)$&$1.26$e$-7$ $ (9.4$e$-8)$&$1.82$e$-3$ $ (1.1$e$-3)$&$\mathbf{2.34}$\textbf{e}$\mathbf{-5}$ $ (5.2$e$-6)$&$0.00$e$+0$ $ (0.0$e$+0)$\\ \hline MNG&$1.80$e$-5$ $ (2.3$e$-5)$&$2.96$e$-2$ $ (4.3$e$-2)$&$1.17$e$-7$ $ (9.9$e$-8)$&$9.06$e$-4$ $ (1.6$e$-3)$&$3.75$e$-5$ $ (9.9$e$-6)$&$2.78$e$-2$ $ (1.7$e$-2)$\\ 
      [1ex] 
 \hline
\end{tabular}
\end{minipage}
\caption{Mean error (standard deviation) after 30 minutes for ${\varphi(\bx)=\|\mathbf{A}\bx-\mathbf{b}\|^2+\delta_{\Real^n_{+}}}$ and $\omega(\bx)=\|\bx\|_{\bQ}$.}
\label{table:exp1}
\vskip-10pt
\end{table}

The values of $\Delta\varphi_{t,m}$ and $\Delta\omega_{t,m}$ over time are presented in Figure~\ref{fig:exp1}. Table~\ref{table:exp1} summarizes these values for $t$ equal to 30 minutes.
As can be seen,  while there is not a consistently superior method in all three data sets, ITALEX-GCG is consistently the slowest. However, for the Foxgood data set, ITALEX-PG achieves the best performance, and for the other two  data sets, ITALEX-PG achieves the second best performance, second only IR-PG. The slower rate of ITALEX-GCG can be explained by the large diameter of the constructed compact domain needed to apply the GCG step.
All methods exhibit a clear trade-off between the \textit{inner} and \textit{outer} function values.

\subsection{Nonsmooth and strongly convex outer function}\label{sec:numerical_nonsmooth_stronglyconvex}

In the next experiment, we want to evaluate the performance of the different methods in a sparse setting. We therefore chose $\tilde{\bb}=\bA\tilde{\bx}+\bnu^i$, where $\tilde{\bx}$ is a sparse vector, such that
$\tilde{\bx}\in\Real^{1,000},\|\tilde{\bx}\|_0=200$ (taking 200 randomly selected components of $\bx$), and $\bnu^i$ is generated by normal distribution with mean $\bzero$ and standard deviation $\sigma=10^{-3}$.
In this set of experiments, we take $g\equiv0$ and choose $\omega(\bx)$ to be the strongly convex and nonsmooth elastic-net function $\omega(\bx)=\|\bx\|_1+\rho\|\bx\|^2$ with $\rho=0.05$. We compared both implementations of ITALEX (with GCG and PG) to the only two methods that can deal with this type of outer function: IR-IG \cite{Incremental} and BiG-SAM \cite{Big_SAM} applied to a smoothed function $\omega$, with two different uniform-accuracy smoothing parameters $\delta=1$ and ${\delta=1e-3}$. The smoothing is done, as described in \cite{Big_SAM}, by taking $\bx-s\nabla\tilde{\omega}(\bx)$ instead of $\bx-s\nabla \omega(\bx)$, with $\tilde{\omega}=H_{\delta}+\rho\|\bx\|^2$,  where $H_{\delta}$ is the Huber function, and $s=2\delta$, where  (as the $l_1$ norm is 1-Lipschitz continuous). The results are shown in Figure~\ref{fig:exp3} and Table~\ref{table:exp3}.

\begin{figure}[t]
\centering
\includegraphics[width=1\textwidth]{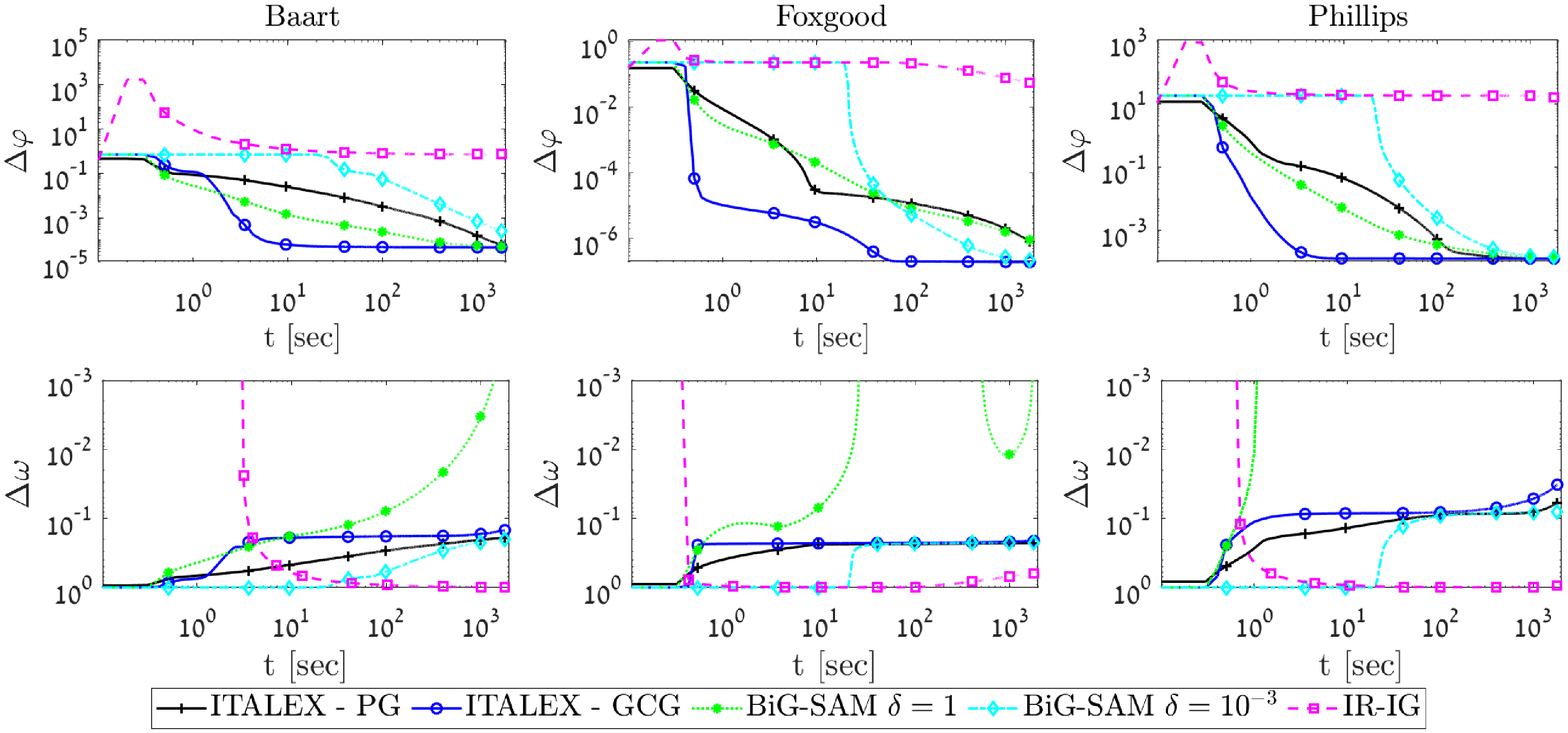}

\caption{$\Delta\varphi$ and $\Delta\omega$ over time for $\varphi(\bx)=\|\mathbf{A}\bx-\mathbf{b}\|^2$, $\omega(\bx)=\|\bx\|_1+\rho\|\bx\|^2$.}\label{fig:exp3}
\end{figure}

Since in the first steps of IR-IG the value of $\omega$ increases significantly, we took $\omega^*_i=\max_{k\geq 500, m\in M}\left\{\omega\left(\bx^k_{i,m}\right)\right\}.$ Therefore, for larger values of $\omega$ (that appear only in IR-IG) we cannot show $\Delta\omega$.
For all the data sets, ITALEX with  GCG is the fastest and IR-IG is the slowest, in terms of inner function convergence, where the latter achieves $\Delta\varphi$ values that are larger by five orders of magnitude from the former. Moreover, while ITALEX with PG and BiG-SAM with smoothing parameter $\delta=1$ have similar convergence with respect to the inner problem, BiG-SAM obtains significantly higher (suboptimal)  \textit{outer} function values {due to the rough smoothing.} %This is due to the error $\delta$ resulting from the fact that BiG-SAM operates on the smoothed outer function instead of the true outer function. Additionally, for both the Baart and Foxgood data sets, there are times where BiG-SAM with $\delta=1$ has greater $\varphi$ and $\omega$ values than the ITALEX with GCG. So while ITALEX maintains the minimal value of $\omega$ for each level of $\varphi$, BiG-SAM does not, possibly due to the smoothing. 
Moreover, BiG-SAM with $\delta=10^{-3}$ converges noticeably slower compared to ITALEX, especially in the beginning, probably due to the complexity of smoothed BiG-SAM, which is $O(1/\varepsilon\delta^2)$.

\begin{table}[H]
	\centering
	\begin{minipage}{\textwidth}
		\centering
		\small
		\begin{tabular}{ |p{1.8cm}||p{1.4cm}|p{1.4cm}|p{1.4cm}|p{1.4cm}|p{1.4cm}|p{1.4cm}|  }\hline \multirow{3}{2.4cm}{Method}&
			\multicolumn{2}{c|}{Baart}&
			\multicolumn{2}{c|}{Foxgood}&
			\multicolumn{2}{c|}{Phillips}\\
			&   $\Delta\varphi$&   $\Delta\omega$&   $\Delta\varphi$&   $\Delta\omega$&   $\Delta\varphi$&   $\Delta\omega$\\\hline 
			ITALEX - PG&$1.78$e$-5$ $(1.1$e$-5)$&$1.91$e$-1$ $(8.8$e$-3)$&$6.80$e$-7$ $(5.5$e$-8)$&$2.50$e$-1$ $(9.2$e$-4)$&$6.45$e$-5$ $(1.9$e$-5)$&$1.76$e$-1$ $(1.6$e$-2)$\\ \hline ITALEX - GCG&$\mathbf{9.05}$\textbf{e}$\mathbf{-7}$ $ (8.0$e$-6)$&$1.47$e$-1$ $ (3.5$e$-2)$&$\mathbf{5.50}$\textbf{e}$\mathbf{-8}$ $ (3.0$e$-8)$&$2.35$e$-1$ $ (7.1$e$-3)$&$\mathbf{6.39}$\textbf{e}$\mathbf{-5}$ $ (1.9$e$-5)$&$1.53$e$-1$ $ (1.2$e$-2)$\\ \hline BiG-SAM $\delta=1$&$5.31$e$-6$ $ (7.4$e$-6)$&$2.99$e$-4$ $ (6.5$e$-4)$&$7.69$e$-7$ $ (8.4$e$-8)$&$2.93$e$-2$ $ (5.7$e$-3)$&$7.39$e$-5$ $ (1.9$e$-5)$&$1.24$e$-1$ $ (6.7$e$-3)$\\ \hline BiG-SAM $\delta=10^{-3}$&$2.12$e$-4$ $ (1.7$e$-5)$&$2.00$e$-1$ $ (9.0$e$-3)$&$8.82$e$-8$ $ (3.3$e$-8)$&$2.47$e$-1$ $ (9.3$e$-4)$&$7.67$e$-5$ $ (1.9$e$-5)$&$1.95$e$-1$ $ (4.8$e$-4)$\\ \hline 
			IR-IG&$7.15$e$-1$ $ (5.1$e$-2)$&${9.90}${e}${-1}$ $ (2.8$e$-4)$&$5.68$e$-2$ $ (1.2$e$-3)$&${6.32}${e}${-1}$ $ (4.1$e$-3)$&$1.59$e$+1$ $ (2.0$e$-1)$&${9.51}$\textbf{e}${-1}$ $ (4.3$e$-3)$\\  [1ex] \hline
		\end{tabular}
	\end{minipage}
	\caption{Mean error (standard deviation) after 30 minutes for $\varphi(\bx)=\|\mathbf{A}\bx-\mathbf{b}\|^2)$ and $\omega(\bx)=\|\bx\|_1+\rho\|\bx\|^2$.}
	\label{table:exp3}
	\vskip-10pt
\end{table}

\subsection{Nonsmooth and 
non-strongly convex outer function}\label{sec:numerical_nonsmooth}

For our last set of experiments, we tackle the setting that motivated the development of ITALEX, the case where $\omega $ is neither smooth nor strongly convex. In these experiments we again choose $g\equiv 0$, and set $\omega(\bx)=\|\bx\|_1$. The data was generated as in the elastic-net experiment. Since no existing first-order methods can address this setting, we only show a comparison between the two ITALEX implementations in Figure~\ref{fig:exp4}.

We observe that in this example, in contrast to the previous one, ITALEX-CT with PG has a significantly faster inner function convergence rate than that ITALEX-CT with GCG. We believe this is due to the difference between the linear convergence rate of the PG compared to the sublinear convergence rate of GCG for optimizing the least square function over a polyhedral set \cite{Lin_Proj_Grad_Polyhedral}.
\begin{figure}[t]
\vskip-10pt
\centering
\includegraphics[width=1\textwidth]{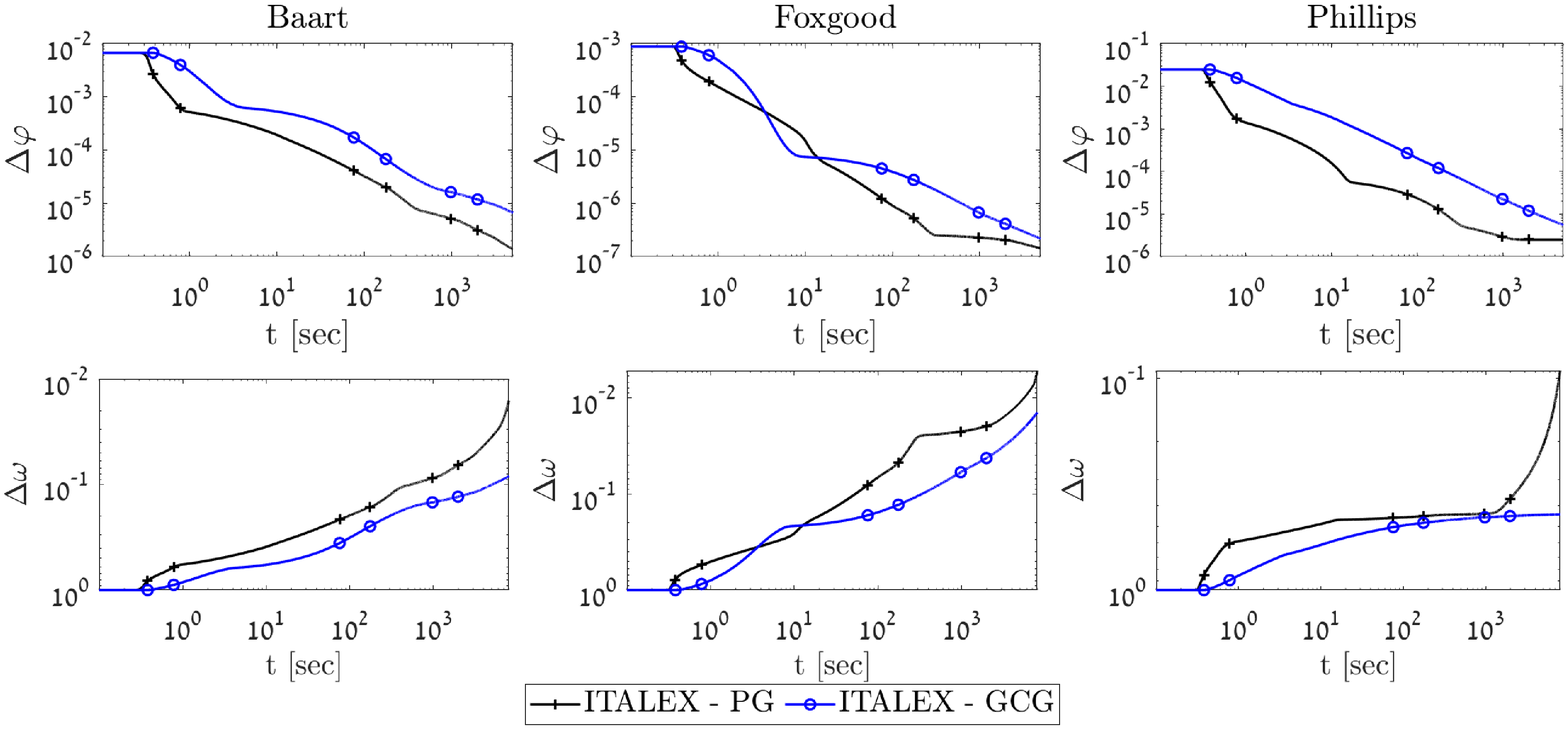}
\captionsetup{justification=centering}
\caption{$\Delta\varphi$ and $\Delta\omega$ over time for $\varphi(\bx)=\|\mathbf{A}\bx-\mathbf{b}\|^2$ and $\omega(\bx)=\|\bx\|_1$.}\label{fig:exp4}
\end{figure}
In order to compare ITALEX-CT against some benchmark, we compare its results to the ones obtained by iterative regularization.
That is, we solve the following problems
\begin{equation*}\label{prob:p_lambda}\tag{$P_\lambda$}
    \min \varphi(\bx)+\lambda_{\ell}\omega(\bx),
\end{equation*}
with decreasing values of $\lambda_{\ell}$. Specifically, we chose
$\lambda_{\ell}=\frac{1}{2^\ell}\lambda_{\max}(\bA^{\top}\bA)$, where $\ell\in[15]$. We solved each of the regularized problems using CVX with Gurobi solver, and computed the values of $\omega$ and $\varphi$. In Figure \ref{fig:exp4_om_phi} we present the trade-off graph between $\Delta\varphi(\bx)$ and $\Delta\omega(\bx)$ for both ITALEX-CT implementations and for the iterative regularization. The graph for ITALEX-CT was constructed by computing $\omega^*_i=\omega(\bx^i_{\lambda_{15}})$ for each instance $i$. For the ITALEX-CT method, for each instance $i$, variant $m$, and value of $\Delta\varphi$ we computed $$\Delta\omega^i_m(\Delta\varphi)=1-\frac{1}{\omega^*_i}\min_{k\in [K^{i,m}]}\{\omega(\bx^k_{i,m}):\Delta\varphi(\bx^k_{i,m})\leq\Delta\varphi\}.$$ We then computed the average and percentile value over the realizations. For the  iterative regularization graph, for each value $\lambda_\ell$, given $\bx_{\lambda_{\ell}}^i$, the solution \eqref{prob:p_lambda} for  the $r$th realization, we computed $\Delta\varphi(\bx_{\lambda_{\ell}}^r)$  and $\Delta\omega(\bx_{\lambda_{\ell}}^r)$.
We then computed the average and percentiles of these values over the realizations.

\begin{figure}[ht]
\centering
\includegraphics[width=1\textwidth]{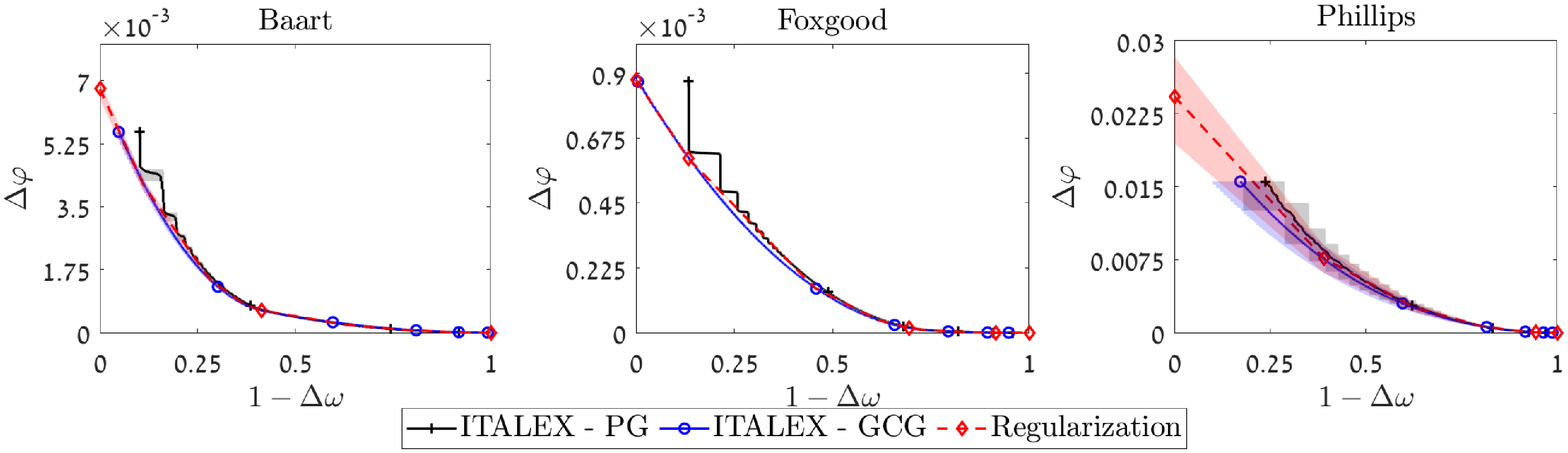}
\caption{$\Delta\varphi$ vs. $1-\Delta\omega$ for  $\varphi(\bx)=\|\mathbf{A}\bx-\mathbf{b}\|^2$ and $\omega(\bx)=\|\bx\|_1$. The lines indicate the average values, while the shaded regions indicate the area between the 10th and 90th percentiles}\label{fig:exp4_om_phi}
\end{figure}

In Figure~\ref{fig:exp4_om_phi}, we see that during its execution, ITALEX follows the line of the iterative regularization, and is sometimes better (below it), {this pattern stems from the fact that ITALEX follows $H(\bx,\alpha)$ for increasing $\alpha$ and the iterative regularization essentially solves the Lagrangian relaxation of $H(\bx,\alpha)$ (see definition in Section~4) with $\lambda$ being an optimal value of the dual variable corresponding to some (unknown) $\alpha$. Nevertheless, contrary to iterative regularization, ITALEX does not require solving multiple optimization problems to obtain the above lines.} Interestingly ITALEX with PG exhibits a 'step' structure {that occurs due to PG making large expansion steps and, at the same time, a large number of iterations in each approximation oracle. This `step' structure also occurs in ITALEX with GCG, however, due to the smaller expansion steps which stem from the difference in the optimality measures of PG and GCG, this structure disappears when averaging across realizations. }

\section{Discussion and conclusions}
In this paper, we presented a general methodology called ITALEX, to solve simple convex bilevel optimization problems. We showed that our methodology guarantees convergence to an optimal solution of the bilevel problem, and that it can be applied under milder assumptions about the structure of the outer function than the assumptions used in previous work. Specifically, ITALEX-CT does not require the outer function to be strongly convex or smooth. We also presented two implementations of ITALEX-CT  using GCG and PG, which obtain an $O(1/\varepsilon)$ iteration complexity for the inner function convergence, the same as the best known complexity for the case of smooth and strongly convex outer function, and is the first method to show an $O(1/\sqrt{\varepsilon})$ iteration complexity for the outer function convergence.

As to future work, we believe that under more restrictive assumptions on the structure of $\varphi$ and $\omega$, an accelerated convergence of ITALEX-CT with respect to the inner function may be obtained. It is also of interest to identify the classes of problems for which PG works better than GCG and vice versa. 
\subsection*{Acknowledgments}
We are grateful to Prof. Shoham Sabach for presenting us with this problem and for his helpful
suggestions that greatly improved this work.

\begin{appendices}
\section{Proof of Lemma~\ref{lemma:h_decreasing}}\label{appendix:h_properties}
\begin{proof}
\begin{enumerate}[label=(\roman*)]
    \item 
    {$H(\bx,\bz,\alpha)$ is an extended value function that is jointly convex in $\bx$, $\bz$, and $\alpha$, as a sum of the convex functions $\varphi(\bx)+\norm{\bx-\bz}^2$ and the indicator function over the convex set $\epi(\omega)$.} Since by definition $h(\alpha)$ is a partial minimization of convex function {$H(\bx,\bz,\alpha)$}, by \cite[Theorem 2.18]{FO_Book}, it is also convex.
    \item First, we will show $h$ is a nonincreasing function. Let $\alpha_1\leq\alpha_2$, since $\Lev_{\omega}(\alpha_1)\subseteq\Lev_{\omega}(\alpha_2)$, it is clear that
    $${h(\alpha_2)=\hskip-7pt \min_{\substack{\bx\in\Real^n,\\ \bz\in\Lev_{\omega}(\alpha_2)}}\{\varphi(\bx)+\norm{\bx-\bz}^2\}\leq\hspace{-10pt}\min_{\substack{\bx\in\Real^n,\\ \bz\in\Lev_{\omega}(\alpha_1)}}
    \{\varphi(\bx)+\norm{\bx-\bz}^2\}=h(\alpha_1).}$$
    % $$h(\alpha_2)=\hskip-2pt \min_{\bx\in\Real^n}\{\varphi(\bx)+\dist(\bx,\Lev_{\omega}(\alpha_2))^2\}\leq\min_{\bx\in\Real^n}\{\varphi(\bx)+\dist(\bx,\Lev_{\omega}(\alpha_1))^2\}=h(\alpha_1).$$
    By the definition of $\omega^*$, for any $\alpha\geq\omega^*$ we have that $h(\alpha)=\varphi^*=h(\omega^*)$, and for any $\alpha<\omega^*$, we have that $h(\alpha)>\varphi^*$. Therefore, it remains to show that for any $\ubar{\omega}\leq\alpha_1<\alpha_2<\omega^*$, $h(\alpha_1)>h(\alpha_2)$.
    The choice of $\alpha_2$ implies that there exists $\lambda\in(0,1)$ such that $\alpha_2=\lambda \alpha_1+(1-\lambda)\omega^*$. Thus, by the convexity of $h(\cdot)$ we have that 
    $$h(\alpha_2)\leq\lambda h(\alpha_1)+(1-\lambda)h(\omega^*)<h(\alpha_1),$$
    where the final inequality follows from the fact that $\alpha_1<\omega^*$, and therefore $h(\alpha_1)>\varphi^*$, thus concluding the proof.
\end{enumerate}\vskip-10pt
\end{proof}

\section{Proof of Lemma~\ref{lemma:decreasing_Sequence} }\label{appendix:proof_lemma_decreasing_sequence}
%\begin{L:decreasing_Sequence}
%Let \(\{\xi_p\}_{p\in\mathbb{N}}\) be a non-negative sequence that satisfies the following inequality for all $p\in\mathbb{N}$ 
%\begin{equation}\label{eq:lemma_decreasing_sequence}    \xi_{p+1}\leq\xi_p-\eta \xi_{p+1}^2,\quad \eta>0.\end{equation}
%Then,
%\begin{equation*}   \xi_p\leq\frac{\max\{\frac{2}{\eta},\xi_1\}}{p},\quad \forall p\in\mathbb{N}
%\end{equation*}
%\end{L:decreasing_Sequence}
\begin{proof} First denote $a=\max\{\frac{2}{\eta},\xi_1\}$. We will prove the statement by induction. In the case $p=1$, $\xi_1\leq \frac{a}{1}={\max\{\frac{2}{\eta},\xi_1\}}$ trivially holds. Now, assume that the statement is true for $j=1,...,p$, we will prove it also holds for $p+1$. By the properties of the sequence and the induction assumption, we have \begin{equation}\label{eq:technical_lemma_1}
    (1+\eta\xi_{p+1})\xi_{p+1}\leq\xi_p\leq\frac{a}{p}
\end{equation}
Assume to the contrary that $\xi_{p+1}>\frac{a}{p+1}$, then by  {multiplying \eqref{eq:technical_lemma_1} by $p(p+2)^2$} we obtain 
% \begin{equation*}
%     (1+\frac{\eta \cdot a}{p+1})\frac{1}{p+1}<\frac{1}{p}.
% \end{equation*}
% Multiplying both side of the inequality by $p(p+1)^2$ results in
\begin{equation*}
    {p^2+p+\eta a p}< {p^2+2p+1}.
\end{equation*}
By definition of $a$, the above inequality implies
$$2p=  \frac{2}{\eta} \eta p\leq \eta a p < p+1,$$ 
and since $p\geq 1$, we obtain a contradiction. Thus, $\xi_{p+1}\leq \frac{a}{p+1}$. 
\end{proof}
\end{appendices}

\bibliographystyle{spmpsci}      % mathematics and physical 
\bibliography{1.bib}   % name your BibTeX data base

\end{document}